\theoremstyle{definition}
\theoremstyle{plain}
\date{}
\newtheorem{Thm}{Theorem}[section]
\newtheorem{Prop}[Thm]{Proposition}
\newtheorem{Lemma}[Thm]{Lemma}
\newtheorem{Cor}[Thm]{Corollary}
\newcommand{\p}{\partial}
\newcommand{\dis}{\displaystyle}
\newcommand{\norm}{\parallel}
\newcommand{\Z}{{\mathbb Z}}
\newcommand{\N}{{\mathbb N}}
\newcommand{\R}{{\mathbb R}}
\newcommand{\ep}{\varepsilon }
\newcommand{\Omegain}{ {\Omega_h\setminus\p\Omega_h}}
\def\text#1{\mbox{#1 }}
\title{\bf Finite difference methods for \\ linear transport equations}
\author{Kohei Soga
\footnote{Department of Mathematics, Faculty of Science and Technology, Keio University, 3-14-1 Hiyoshi, Kohoku-ku, Yokohama, 223-8522, Japan. E-mail:  soga@math.keio.ac.jp 
}
}
\begin{document}
\maketitle
\begin{abstract} 
\noindent DiPerna-Lions (Invent.\,Math.,\,1989) established the existence and uniqueness results for linear transport equations with Sobolev velocity fields. 
This paper provides mathematical analysis on two simple finite difference methods applied to linear transport equations on a bounded domain with divergence-free (unbounded) Sobolev velocity fields.  
The first method is based on a Lax-Friedrichs type explicit scheme with a generalized hyperbolic scale, where  truncation of an unbounded velocity field and its measure estimate are implemented to ensure the monotonicity of the scheme; the method is $L^p$-strongly convergent. 
The second method is based on an implicit scheme with $L^2$-estimates, where the discrete Helmholtz-Hodge decomposition for discretized velocity fields plays an important role to ensure the divergence-free constraint in the discrete problem; the method is scale-free and $L^2$-strongly convergent. 
The key point for both of our methods is to obtain fine $L^2$-bounds of approximate solutions that tend to the norm of the exact solution given by DiPerna-Lions. 
Finally, the explicit scheme is applied to the case with smooth velocity fields from the viewpoint of  the level-set method involving transport equations, where rigorous  discrete approximation of geometric quantities of  level sets is discussed.      
 
\medskip\medskip

\noindent{\bf Keywords:} transport equation, DiPerna-Lions theory, finite difference method, level-set method 
\medskip

\noindent{\bf AMS subject classifications:} 35Q49,  35D30,  65M06, 65M12, 53A05

\end{abstract}
%

\setcounter{section}{0}
\setcounter{equation}{0}
\section{Introduction}
We consider linear transport equations
\begin{eqnarray}\label{T.eq} 
&&\begin{cases}\dis
\p_t f(t,x)+v(t,x)\cdot \nabla f(t,x)=0\quad \rm{in}\,\,\,\, (0,T]\times \Omega,\\ 
f(0,\cdot)= f^0\quad \rm{on}\,\,\,\,\Omega,  
\end{cases}\\\nonumber
&&\Omega\subset\R^3\mbox{ is a bounded connected open set},\\\nonumber 
&&v\in L^2([0,T];H^1_0(\Omega)^3)\cap L^\infty([0,T];L^2(\Omega)^3) \mbox{ with $\nabla\cdot v=0$ is a given  function},\\\nonumber 
&&f^0\in L^\infty(\Omega)\mbox{ or } L^2(\Omega) \,\,\mbox{ is initial data},
\end{eqnarray}
where $T>0$ is an arbitrary terminal time, $f:[0,T]\times\Omega\to\R$ is the unknown function, $\p_t,\p_{x_j}$ etc. stand for the (weak) differentiations, $\nabla=(\p_{x_1},\p_{x_2},\p_{x_3})$,  $L^p(\Omega)=L^p(\Omega;\R)$, $H^1_0(\Omega)=H^1_0(\Omega;\R)=\overline{C^\infty_0(\Omega;\R)}^{\norm\cdot\norm_{H^1}}$ and $x\cdot y=\sum_{i=1}^3x_iy_i$ for $x,y\in\R^3$. 

The problem \eqref{T.eq} appears in classical fluid mechanics together with the ODE
\begin{eqnarray}\label{ODE}
x'(s)=v(s,x(s)),
\end{eqnarray}
where $v$ is an Eulerian velocity field of an incompressible fluid flow and the flow of \eqref{ODE} gives trajectories of Lagrangian fluid particles. When $v$ is $C^1$-smooth, the method of characteristics connects  \eqref{T.eq} and \eqref{ODE}, providing full information on both problems. However, it is not clear if one can always find a smooth velocity field of a fluid flow. DiPerna-Lions \cite{DiPerna-Lions} generalized the classical theory to the problems  \eqref{T.eq} and \eqref{ODE} with velocity fields in Sobolev spaces (under weaker assumptions than ours), introducing weak solutions of \eqref{T.eq}  and a notion of the ``flow'' of \eqref{ODE}. We refer to \cite{AC} for further development of DiPerna-Lions theory.         

The regularity class of the velocity field mentioned in  \eqref{T.eq} is motivated by the class of Leray-Hopf weak solutions to the  incompressible homogeneous Navier-Stokes equations, the class of weak solutions of  incompressible inhomogeneous Navier-Stokes equations studied in \cite{Simon}  or the class of generalized solutions to the incompressible two-phase Navier-Stokes equations introduced in \cite{Abels} (so-called varifold solutions). In an  inhomogeneous flow, the density profile is transported by the velocity field, and the incompressible Navier-Stokes equations are coupled with the linear transport equation. In a two-phase flow, an  interface is transported by the velocity field. The  level-set method is an effective way to chase   moving interfaces (see \cite{SS}), where level-set functions are given as solutions of the linear transport equation. 
Our motivation for  \eqref{T.eq}  comes from mathematical/numerical analysis on incompressible inhomogeneous flows and  two-phase flows with the level-set method. In particular, {\it we focus our attention on the theoretical aspect of classical finite difference methods of transport equations for possible applications in fluid mechanics.} We refer to \cite{G} and the references therein for physics of the miscible mixing of two incompressible fluids (inhomogeneous flows); we refer also to \cite{BR} for recent developments of theory of two-phase flows in the framework of computational fluid mechanics. We remark that, in the literature, numerical analysis on continuity equations has been more frequent in the framework of finite volume methods, particularly in the context of compressible fluid mechanics.             

Before going into the main results of this paper, we briefly explain how to verify the existence and uniqueness for  \eqref{T.eq} by means of DiPerna-Lions' existence and   uniqueness results obtained for the whole space problems. This is possible in the case where $v$ vanishes on $\p\Omega$. Note that, in the case where $v$ does not vanish on $\p\Omega$, existence and uniqueness  must be analyzed independently: see \cite{Boyer0}.   A function $f$ is a weak solution of \eqref{T.eq} with $f^0\in L^\infty(\Omega)$ (resp. $f^0\in L^2(\Omega)$), if $f\in L^\infty([0,T];L^\infty(\Omega))$  (resp. $f\in L^\infty([0,T];L^2(\Omega))$) and 
\begin{eqnarray}\label{weak-sol}
&&\int_0^T\int_\Omega\Big( f(t,x)\p_t\varphi(t,x) +f(t,x)v(t,x)\cdot\nabla \varphi(t,x)\Big)\,dxdt+\int_\Omega f^0(x)\varphi(0,x)dx=0\\\nonumber
&&\mbox{for all  $\varphi\in C^\infty([0,T]\times\R^3;\R)$ with supp$(\varphi)\subset[0,T)\times\R^3$ compact}.
\end{eqnarray}
Note that test functions are NOT taken as compactly supported elements of  $C^\infty([0,T]\times\Omega;\R^3)$. Our choice of the test functions implies equivalence between \eqref{T.eq} and the whole space problem 
\begin{eqnarray}\label{T2.eq} 
&&\begin{cases}\dis
\p_t \tilde{f}(t,x)+\tilde{v}(t,x)\cdot \nabla \tilde{f}(t,x)=0\quad \rm{in}\,\,\,\, (0,T]\times \R^3,\\ 
\tilde{f}(0,\cdot)= \tilde{f}^0\quad \rm{on}\,\,\,\,\R^3,  
\end{cases}\\\nonumber
&&\mbox{$\tilde{v}$  is the $0$-extension of $v$ to $[0,T]\times\R^3$; $\tilde{f}^0$ is  the $0$-extension of $f^0$ to $\R^3$}, 
\end{eqnarray}
where $\tilde{v}\in  L^2([0,T];H^1_0(\R)^3)\cap L^\infty([0,T];L^2(\R)^3)$ with $\nabla\cdot \tilde{v}=0$. 
A function $\tilde{f}$ is a weak solution of \eqref{T2.eq} with $ \tilde{f}^0\in L^\infty(\R^3)$ (resp. $\tilde{f}^0\in L^2(\R^3)$), if $\tilde{f}\in L^\infty([0,T];L^\infty(\R^3))$ (resp. $\tilde{f}\in L^\infty([0,T];L^2(\R^3))$) and 
\begin{eqnarray}\label{weak-sol2}
&&\int_0^T\int_{\R^3}\Big( \tilde{f}(t,x)\p_t\varphi(t,x) +\tilde{f}(t,x)v(t,x)\cdot\nabla \varphi(t,x)\Big)\,dxdt+\int_{\R^3} \tilde{f}^0(x)\varphi(0,x)dx=0\\\nonumber
&&\mbox{for all  $\varphi\in C^\infty([0,T]\times\R^3;\R)$ with supp$(\varphi)\subset[0,T)\times\R^3$ compact}.
\end{eqnarray}
It is clear that the $0$-extension of  a weak solution of  \eqref{T.eq} to $[0,T]\times\R^3$ is a weak solution of \eqref{T2.eq}. Conversely, let $\tilde{f}$ be a weak solution of \eqref{T2.eq}. It follows from \eqref{weak-sol2} that we have for any test function of the form $\varphi(t,x)=\varphi_1(t)\varphi_2(x)$ with supp$(\varphi_1)\subset [0,T)$ and supp$(\varphi_2)\subset \R^3\setminus  \bar{\Omega}$,
$$\int_{\R^3\setminus\bar{\Omega}}\Big(\int_0^T\tilde{f}(t,x)\varphi_1'(t)dt\Big)\varphi_2(x)dx=0,$$
which implies 
$$\int_0^T\tilde{f}(t,x)\varphi_1'(t)dt=0,\quad {\rm a.e.} \,\,\,x\in \R^3\setminus\bar{\Omega}.$$
Hence, we see that $\tilde{f}(t,x)$ is independent of $t$ for a.e. $x\in \R^3\setminus\bar{\Omega}$ fixed, from which we obtain    
$$-\varphi_1(0)\int_{\R^3\setminus\bar{\Omega}}\tilde{f}(t,x)\varphi_2(x)\,dx=0.$$ 
Therefore, we conclude that $\tilde{f}(t, x)=0$ a.e. $(t,x)\in[0,T]\times( \R^3\setminus\bar{\Omega})$. Now it is clear that $f:=\tilde{f}|_{x\in\Omega}$ is a weak solution of \eqref{T.eq}. 

In our upcoming investigation, it is convenient to have quantities that vanish near the boundary due to the use of  the (discrete) integration by parts.    For this reason, we re-set the transport equation in a bounded open set $\tilde{\Omega}\subset\R^3$ containing $\bar{\Omega}$ with the $0$-extension of $v$ to $[0,T]\times\tilde{\Omega}$ and the $0$-extension of $f^0$ to $\tilde{\Omega}$.     
To sum up, we reach the following important facts: 
{\it 
\begin{enumerate}
\item[(F1)] Due to DiPerna-Lions  theory \cite{DiPerna-Lions}, there exists the unique weak solution $f$ of \eqref{T.eq} and $\tilde{f}$ of \eqref{T2.eq}, which satisfy  $\tilde{f}|_{x\in\Omega}=f$ and  
$$\norm \tilde{f}(t,\cdot)\norm_{L^2(\R^3)} = \norm f(t,\cdot)\norm_{L^2(\Omega)}=\norm \tilde{f}^0\norm_{L^2(\R^3)}=\norm f^0\norm_{L^2(\Omega)},  \mbox{\quad  a.e. $t\in[0,T]$}$$
 (see equality (26) of \cite{DiPerna-Lions}), where the $L^\infty_t L^\infty_x$-weak solution is regarded as an  $L^\infty_t L^2_x$-weak solution. 
\item[(F2)] Without loss of generality, we may investigate \eqref{T.eq} assuming in addition that {\rm supp}$(v)\subset[0,T]\times \Omega$ and  {\rm supp}$(f^0)\subset\Omega$. 
\end{enumerate}
}

This paper provides mathematical analysis on a two simple finite difference methods of \eqref{T.eq}. The novelty is that {\it this is the first attempt to apply  classical finite difference methods to DiPerna-Lions theory; a new idea is given to deal with explicit schemes without assuming the space-time $L^\infty$-bound of the velocity field.}  

In Section 3, we discuss  the first method, where we solve  \eqref{T.eq} with $f^0\in L^\infty(\Omega)$ based on a Lax-Friedrichs type explicit scheme under a generalized hyperbolic scaling condition for the space-time mesh size. 
To ensure the monotonicity of the scheme (i.e., the CFL-condition), we implement truncation of a possibly unbounded velocity field in terms of the mesh size  together with a suitable measure estimate for the truncated part (its measure must be arbitrarily small  as the mesh size tends to $0$). The measure estimate requires the embedding  $L^2([0,T];H^1_0(\Omega)^3)\cap L^\infty([0,T];L^2(\Omega)^3)\subset L^3([0,T];L^3(\Omega)^3)$. The monotonicity implies the comparison principle for the discrete problem, which would be important also in the context of the level-set method. Our  explicit method is $L^p$-strongly convergent for all $1\le  p<\infty$ (this is proven via the $L^2$-strong convergence). We remark that our generalized hyperbolic scaling condition would not be so convenient in actual computations of hyperbolic problems due to stronger numerical diffusiveness; on the other hand, our idea would be effectively applied to possible explicit schemes for (quasilinear) parabolic problems with an unbounded advection coefficient.           

In Section 4, we discuss the second method, where we  solve \eqref{T.eq} with $f^0\in L^2(\Omega)$ based on an implicit scheme and its $L^2$-estimates. A formal calculation for $\eqref{T.eq}$ with the divergence-free constraint of $v$ implies that the $L^2$-norm of $f(t,\cdot)$ does not depend on $t$, which is the key observation to obtain $L^2$-weak solutions. We follows the same procedure for the discrete problem with the discrete divergence-free constraint.  Since the discretized velocity field is not necessarily (discrete)  divergence-free, we take out  its divergence-free part by means of the discrete Helmholtz-Hodge decomposition. The whole discussion is compatible with the fully discrete finite difference projection methods originally introduced in  \cite{Chorin} and further developed in \cite{Kuroki-Soga} and \cite{Maeda-Soga}.    
Our implicit method is scale-free and $L^2$-strongly convergent. 

The structure of the transport equation does not obviously allow a priori estimates of the derivatives of solutions. Hence, weak convergence seems to be maximal in our methods. 
However,  due to fine $L^2$-bounds of approximate solutions that tend to the norm of the exact solution given in  DiPerna-Lions theory, we find that the weak convergence is in fact strong convergence. It is interesting to compare our strong convergence results with the work \cite{BJ}, in which the authors proved that an abstract explicit numerical scheme on the Cartesian grid applied to nonlinear continuity equations (linear transport equations are included) is $L^1_{\rm loc}$-strongly convergent, provided velocity fields have the space-time $L^\infty$-bound so that the scheme is monotone. Their argument is based on a priori estimates in an intermediate space between Sobolev spaces and Lebesgue spaces which is still compactly embedded in a Lebesgue space, i.e., solutions of (nonlinear) continuity equations have regularity weaker than Sobolev but better than Lebesgue.  Finally, we refer to the work \cite{Boyer} and the references therein for stability and convergence results of the implicit upwind finite volume method applied to linear continuity equations, where the author proved strong convergence in $C^0([0,T];L^p(\Omega))$.  The time-uniformity  was indirectly derived by comparing the discrete problem, the exact problem and the regularized problem with the commutator error term. The method given in  \cite{Boyer}  has been further investigated to obtain rates of weak convergence: see \cite{Schlichting} and the references therein.  
Although we do not go into detail in this paper, one could apply the same  procedure as Section 5 of \cite{Boyer} to our finite difference methods for possible strong convergence in $C^0([0,T];L^p(\Omega))$. A rate of (weak)  convergence of our finite difference methods is a completely open question.  Some part of our results would still hold for a problem having a velocity field without $W^{1,p}$-regularity,   since the finite difference methods do not need the trace of the velocity field to define its discretization (i.e.,  a finite volume method requires the trace of a given velocity field on the boundary of each mesh to define its discretization).        

In Section 5, we apply our explicit scheme to the problem with a smooth velocity field from the viewpoint of the level-set method, and demonstrate error estimates up to the second order $x$-derivatives. Then, we discuss  discrete approximation of geometric quantities such as the unit normal vector field, the mean curvature, the area element, etc., of a level-set given by a smooth solution of \eqref{T.eq}. 
\setcounter{section}{1}
\setcounter{equation}{0}
\section{Preliminary}

Let $\tau,h>0$ be the mesh size for time, space, respectively. We will refer to the scale condition later. 
The discrete time is defined as $t_n:=n\tau$ for $n\in\N\cup\{0\}$. Let $T_\tau\in\N$ be such that $T\in[\tau T_\tau,\tau T_\tau+\tau)$. $T_\tau$ is the terminal time of the discrete problems. 
We introduce the grid $h\Z^3:=\{ (hz_1,hz_2,hz_3)\,|\,z_1,z_2,z_3\in\Z \}$. Let $e^1,e^2,e^3$ be the standard basis of $\R^3$. 
The boundary of  a set $A\subset h\Z^3$ is defined as $\partial A:=\{ x\in A\,|\,\{x\pm he^i\,|\,i=1,2,3\}\not\subset A \}$. 

Let $\Omega$ be a bounded connected open subset of $\R^3$. Set 
$$C_h(x):=\Big[x_1-\frac{h}{2},x_1+\frac{h}{2}\Big)\times\Big[x_2-\frac{h}{2},x_2+\frac{h}{2}\Big)\times\Big[x_3-\frac{h}{2},x_3+\frac{h}{2}\Big).$$
The discretization of $\Omega$ is defined as  
$$\Omega_h:=\{x\in \Omega\cap h\Z^3\,|\, C_{2h}(x)\subset\Omega\}.$$ 
Define the discrete $x$-derivatives of a function $\phi:\Omega_h\to\R$ as
\begin{eqnarray*}
&&D_i^+\phi(x):=\frac{\phi(x+he^i)-\phi(x)}{h},\,\,\,D_i^-\phi(x):=\frac{\phi(x)-\phi(x-he^i)}{h},\\ 
&&D_i\phi(x):=\frac{\phi(x+he^i)-\phi(x-he^i)}{2h},\,\,\,
D_i^2\phi(x):=\frac{\phi(x+he^i)+\phi(x-he^i)-2\phi(x)}{h^2},\\
&&D^\pm:=(D^\pm_1,D^\pm_2,D^\pm_3),\,\,\,D:=(D_1,D_2,D_3)\quad\mbox{ for each $x\in \Omega_h$},
\end{eqnarray*}  
where {\it we always assume that $\phi$ is extended to be $0$ outside $\Omega_h$, i.e., $\phi(x+ he^i)=0$ (resp. $\phi(x- he^i)=0$) in the above definition if $x+ he^i\not\in \Omega_h$ (resp. $x- he^i\not\in \Omega_h$)}. 
 We often use the summation by parts such as  
\begin{eqnarray}\label{by-parts}
&&\sum_{x\in \Omega_h\setminus \partial \Omega_h}w(x) D_i^+\phi(x)=-\sum_{x\in \Omega_h\setminus \partial\Omega_h}D^-_i w(x) \phi(x),\\\nonumber
&&\sum_{x\in \Omega_h\setminus \partial \Omega_h}w(x) D_i\phi(x)=-\sum_{x\in \Omega_h\setminus \partial\Omega_h}D_i w(x) \phi(x)
\end{eqnarray}
for functions $w,\phi:\Omega_h\to\R$ with $w|_{\p\Omega}=\phi|_{\partial \Omega_h}=0$.  

We define the discrete $L^p$-norms of a function $\phi:A\to\R$ or $\phi:A\to\R^3$  with $A\subset h\Z^3$ as 
\begin{eqnarray*}
\norm \phi\norm_{p,A}:=\Big(\sum_{x\in A}|\phi(x)|^ph^3 \Big)^{\frac{1}{p}},\,\,\,\,\norm \phi\norm_{\infty,A}:=\max_{x\in A}|\phi(x)| ;
\end{eqnarray*}
in particular for $p=2$, we introduce the discrete inner product as  
\begin{eqnarray*}
(\phi,\tilde{\phi})_{2, A}:=\sum_{x\in A}\phi(x)\tilde{\phi}(x)h^3,\quad \norm \phi\norm_{2,A}=\sqrt{(\phi,\phi)_{2,A}}. 
\end{eqnarray*}


\setcounter{section}{2}
\setcounter{equation}{0}
\section{Explicit method}

In this section, we investigate an explicit finite difference  method of \eqref{T.eq}. 
Our explicit scheme is formulated on the whole grid  $h\Z^3$; namely,  \eqref{T2.eq} is discretized. {\it The scheme is ``local'' in the sense that the presence of $h\Z^3\setminus\Omega_h$ does not change point-wise  features}; for actual computation, we may look at the values only on $\Omega_h$, even though the support of a solution slightly spreads outside $\Omega_h$ due to numerical viscosity. It is possible to formulate an explicit scheme only on $\Omega_h$ with the artificial boundary condition on $\p\Omega_h$, at least for the construction of a weak solution. However, the presence of the boundary condition is not convenient for error analysis of the derivatives in the smooth case discussed in Section 5. 

Consider initial data $f^0\in L^\infty(\Omega)$ such that $f^0\equiv 0$ in a neighborhood of $\p\Omega$. Note that $f^0\in L^p(\Omega)$ for all $p\in[1,\infty]$, since $\Omega$ is bounded. 
We extend $f^0$ to be $0$ for all $x\in\R^3\setminus \Omega$ and write it as $\tilde{f}^0$.  Define $g^0:h\Z^3\to\R$ as 
\begin{eqnarray}\label{3ini}
g^0(x):=\frac{1}{ h^3}\int_{C_h(x)}\tilde{f}^0(y)\,dy.
\end{eqnarray}
Fix $v\in L^2([0,T];H^1_0(\Omega)^3)\cap L^\infty([0,T];L^2(\Omega)^3)$  such that $\nabla\cdot v=0$  and supp$(v)\subset[0,T]\times \Omega$. 
We extend $v$ to be $0$ for all $x\in\R^3\setminus \Omega$ and write it as $\tilde{v}$.  
Define $u^n=(u^n_1,u^n_2,u^n_3):h\Z^3\to\R^3$, $n=0,1,\ldots, T_\tau-1$  as 
$$u^n_i(x):=\frac{1}{\tau h^3}\int_{\tau n}^{\tau(n+1)}\int_{C_h(x)}\tilde{v}_i(s,y)\,dyds. $$ 
Note that $g^0\equiv 0$, $u^n\equiv0$ near the boundary of $\Omega_h$ and outside $\Omega_h$ for all sufficiently small $h>0$. We always consider such $h>0$.         
\subsection{Basic idea}
We want to design a discrete problem given in an explicit way so that the problem satisfies monotonicity. Naively, one would think of a Lax-Friedrichs type method 
\begin{eqnarray}\label{naive} 
&&\Big(g^{n+1}(x)-  \frac{1}{7}\sum_{\omega\in B}g^n(x+h \omega)  \Big)\frac{1}{\tau} +\sum_{j=1}^3u^n_j(x) D_jg^n(x)=0 \quad \mbox{for \,\,} x\in h\Z^3,\\\nonumber
&&B:=\{\pm e^1,\pm e^2,\pm e^3,0\}, \quad \mbox{$g^0$ is given as \eqref{3ini}},
\end{eqnarray}
which can be rewritten as 
\begin{eqnarray}\label{naive2} 
g^{n+1}(x) &=&  \frac{1}{7}g^n(x)
+\sum_{j=1}^3\Big\{\Big(\frac{1}{7}+\frac{\tau}{2h}u^n_j(x)\Big)g^n(x-he^j)\\\nonumber
&& +\Big(\frac{1}{7}-\frac{\tau}{2h}u^n_j(x)\Big)g^n(x+he^j) \Big\}.
\end{eqnarray}
In this formulation, we need to ensure the CFL-type condition 
\begin{eqnarray}\label{3CFL}
\frac{1}{7}\pm\frac{\tau}{2h}u^n_j(x)\ge0
\end{eqnarray} 
and the (generalized) hyperbolic-type scaling 
$$(h,\tau)\to0\mbox{ \,\,\, in such a way that \,\,\,}\frac{h^2}{\tau}\to0.$$
Note that, without the CFL-condition, we are not able to obtain monotonicity nor convergence of \eqref{naive} in general; if $v\in L^\infty$, one can take the standard hyperbolic scaling $\tau=O(h)$; without  the hyperbolic-type scaling,  
\eqref{naive} would produce a solution of a parabolic transport equation (see \cite{Oleinik}); one could think of a mixture of upwind/downwind type methods (i.e., one uses $D^\pm_j$ in \eqref{naive} instead of $D_j$ depending on the sign of $u^n_j(x)$) in order to be free from the scaling conditions, but the presence of both $D^\pm_j$ causes a trouble in \eqref{by-parts} failing to obtain the weak form.   

The major obstacle is that  $|u^n|$ is allowed to be unbounded.  Hence,  we need a truncation technique, i.e.,  instead of $u^n$ in \eqref{naive}, we introduce $\tilde{u}^n=(\tilde{u}^n_1,\tilde{u}^n_2,\tilde{u}^n_3)$ as  
\begin{eqnarray}\label{truncation}
\tilde{u}^n_j(x):=
\begin{cases}\medskip
u^n_j(x),\mbox{\,\,\, if $|u^n_j(x)|\le h^{-\beta}$},\\ 
{\rm sign}(u^n_j(x))h^{-\beta},\mbox{\,\,\, if $|u^n_j(x)|> h^{-\beta}$}
\end{cases},\quad \beta>0
\end{eqnarray}  
 and take a scaling 
$$ \tau=O(h^{2-\alpha}),\quad \alpha>0$$
with an appropriate  choice of positive constants $\alpha,\beta$.  Note that {\it $\tilde{v}|_{\R^3\setminus\Omega}\equiv0$, $u^n|_{h\Z^3\setminus\Omega_h}\equiv0$ and the truncation takes place only within $\Omega_h$.}          
 Then, we need careful treatment of the truncated parts, in particular, for the term $D\cdot \tilde{u}^n(x)$ appearing in the weak form. This  will be done by a measure estimate of  the truncated points: Set  
$$A^n_{h,j}:=\{x\in\Omega_h\,|\,\tilde{u}^n_j(x)\neq u^n_j(x)\},\,\,\,{\rm vol}(A^n_{h,j}):=\sum_{x\in A^n_{h,j}}h^3.$$
The $L^2$-bound of $v(t,\cdot)$ implies  the $\norm\cdot\norm_{2,\Omega_h}$-bound of $u^n$, from which we have  
$$\sum_{n=0}^{T_\tau-1}{\rm vol}(A^n_{h,j})\tau \le O(h^{2\beta}),$$
but unfortunately this is not sharp enough as we will see later. Hence, we use the fact  
$$L^2([0,T];H^1_0(\Omega))\cap L^\infty([0,T];L^2(\Omega)) \subset L^3([0,T];L^3(\Omega))$$
to obtain 
$$\sum_{n=0}^{T_\tau-1}{\rm vol}(A^n_{h,j})\tau \le O(h^{3\beta}).$$
\indent As we will see later, for the solution $g^n$ of \eqref{naive} ($u$ must be replaced by $\tilde{u}$),  $\norm g^n\norm_{\infty,h\Z^3}$ is  bounded by $\norm f^0\norm_{L^\infty(\Omega)}$ and $\norm g^n\norm_{p,h\Z^3}$ is bounded by $\norm f^0\norm_{L^p(\Omega)}+\mbox{ [small error]}$ for all $1\le p<\infty$. The latter bound is due to the fact that  the sum of the coefficients of $g^n(x+h \omega)$, $\omega \in B$ in \eqref{naive2} is equal to $1$.  This is another virtue of the non-divergence form of \eqref{naive}. 
\subsection{Discrete problem (explicit)}

We describe our discrete problem with unknowns $g^1,\ldots,g^{T_\tau}:h\Z^3\to\R$ as follows:   
\begin{eqnarray}\label{scale}
&&\mbox{$\tau=h^{2-\alpha}$,\,\,\, $\dis h^{1-(\alpha+\beta)}\le \frac{2}{7}$, \,\,\,  $\alpha>0$, \,\,\,$\dis \beta>\frac{1}{2}$, $\alpha+\beta<1$, }\\\label{explicit1}
&& \Big(g^{n+1}(x)-  \frac{1}{7}\sum_{\omega\in B}g^n(x+h \omega)  \Big)\frac{1}{\tau} +\sum_{j=1}^3\tilde{u}^n_j(x) D_jg^n(x)=0,\,\,\,x\in h\Z^3,
\end{eqnarray}
where $\tilde{u}^n$ is defined as \eqref{truncation},  $g^0$ is given as \eqref{3ini} and $B=\{\pm e^1,\pm e^2,\pm e^3,0\}$. Within Section 3, the notation $(h,\tau)$ always includes the scaling condition \eqref{scale}.  
\begin{Thm}\label{Thm-explicit}
The solution of \eqref{scale}-\eqref{explicit1} satisfies for all $1\le n\le T_\tau$,
\begin{eqnarray}\label{e-estimate1}
\norm g^n\norm_{\infty,h\Z^3}&\le& \norm g^0\norm_{\infty,h\Z^3}\le \norm f^0\norm_{L^\infty(\Omega)},
\\\label{e-estimate2}
\norm g^n\norm_{p,h\Z^3}^p&\le& \norm g^0\norm_{p, h\Z^3}^p+\sum_{m=0}^{n-1} \sum_{x\in h\Z^3}(D\cdot \tilde{u}^m(x)) |g^m(x)|^ph^3\tau \\\nonumber
&=& \norm g^0\norm_{p,\Omega_h}^p+\sum_{m=0}^{n-1} \sum_{x\in\Omega_h}(D\cdot \tilde{u}^m(x)) |g^m(x)|^ph^3\tau \\\nonumber
&\le&\norm f^0\norm_{L^p(\Omega)}^p+\sum_{m=0}^{n-1} \sum_{x\in\Omega_h}(D\cdot \tilde{u}^m(x))|g^m(x)|^p h^3\tau,\,\,\,\forall\,p\in[1,\infty). 
\end{eqnarray}
Furthermore, the comparison principle holds for the discrete problem: let $g^n$ be the solution of \eqref{scale}-\eqref{explicit1} and let  $\tilde{g}^n$ be that of \eqref{scale}-\eqref{explicit1} with initial data $\tilde{g}^0$; if $g^0\le\tilde{g}^0$ on $h\Z^3$, we have $g^n\le\tilde{g}^n$ on $h\Z^3$ for all $n=1,\ldots,T_\tau$.
\end{Thm}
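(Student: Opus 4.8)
The plan is to rewrite \eqref{explicit1} in the convex-combination form \eqref{naive2} (with $u$ replaced by $\tilde u$) and exploit the CFL-type nonnegativity of the coefficients, which is guaranteed by the scaling \eqref{scale}. Indeed, the coefficient of $g^n(x\mp he^j)$ is $\tfrac17\pm\tfrac{\tau}{2h}\tilde u^n_j(x)$, and since $|\tilde u^n_j(x)|\le h^{-\beta}$ and $\tau=h^{2-\alpha}$, the correction term is bounded by $\tfrac12 h^{1-(\alpha+\beta)}\le\tfrac17$, so all seven coefficients are nonnegative and sum to $1$. From this, $|g^{n+1}(x)|$ is bounded by a convex combination of $\{|g^n(x+h\omega)|:\omega\in B\}$, giving $\norm g^{n+1}\norm_{\infty,h\Z^3}\le\norm g^n\norm_{\infty,h\Z^3}$; iterating yields \eqref{e-estimate1}, and the final bound $\norm g^0\norm_{\infty,h\Z^3}\le\norm f^0\norm_{L^\infty(\Omega)}$ follows immediately from the averaging definition \eqref{3ini}. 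The comparison principle follows by the same structural observation: if $g^0\le\tilde g^0$ pointwise, then since \eqref{naive2} expresses $g^{n+1}(x)$ and $\tilde g^{n+1}(x)$ as the \emph{same} nonnegative linear combination of the respective values at time $n$, monotonicity propagates by induction on $n$.

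For the $L^p$-estimate \eqref{e-estimate2}, I would start from \eqref{naive2}, apply $|\cdot|$, use nonnegativity of the coefficients together with the convexity of $t\mapsto|t|^p$ (Jensen's inequality applied to the convex combination) to get
\begin{eqnarray*}
|g^{n+1}(x)|^p &\le& \frac{1}{7}|g^n(x)|^p+\sum_{j=1}^3\Big\{\Big(\frac{1}{7}+\frac{\tau}{2h}\tilde u^n_j(x)\Big)|g^n(x-he^j)|^p\\
&& +\Big(\frac{1}{7}-\frac{\tau}{2h}\tilde u^n_j(x)\Big)|g^n(x+he^j)|^p \Big\}.
\end{eqnarray*}
Then I sum over $x\in h\Z^3$, multiply by $h^3$, and carefully shift indices in the sums involving $|g^n(x\mp he^j)|^p$. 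The $\tfrac17$-weighted terms reassemble to $\norm g^n\norm_{p,h\Z^3}^p$, while the $\tfrac{\tau}{2h}\tilde u^n_j$-weighted terms, after the shift, combine into $\sum_x D_j\tilde u^n_j(x)\,|g^n(x)|^p h^3\cdot\tau$ (this is exactly the discrete integration by parts \eqref{by-parts} applied to $w=|g^n|^p$ and the centered difference, valid since $\tilde u^n$ and $g^n$ vanish near $\partial\Omega_h$). Summing the resulting recursion $\norm g^{n+1}\norm_{p,h\Z^3}^p\le\norm g^n\norm_{p,h\Z^3}^p+\tau\sum_x (D\cdot\tilde u^n(x))|g^n(x)|^p h^3$ from $m=0$ to $n-1$ gives the first line of \eqref{e-estimate2}. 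The two subsequent equalities/inequalities reduce to noting that $g^m$ and $\tilde u^m$ are supported in $\Omega_h$ (so sums over $h\Z^3$ equal sums over $\Omega_h$) and that $\norm g^0\norm_{p,\Omega_h}^p\le\norm f^0\norm_{L^p(\Omega)}^p$ by Jensen applied to the averages \eqref{3ini}.

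The only genuinely delicate point is the index-shifting step that converts the off-diagonal contributions into the divergence term $D\cdot\tilde u^m$. One must check that no boundary terms are lost: this is where the standing assumption that $\phi$ is extended by $0$ outside $\Omega_h$, together with $u^n\equiv 0$ near and outside $\partial\Omega_h$ for small $h$, is used — the centered difference $D_j$ is skew-adjoint on the full grid $h\Z^3$ precisely because both factors decay to $0$, so \eqref{by-parts} applies with no remainder. I would also double-check the sign bookkeeping so that the coefficient $\tfrac{\tau}{2h}\tilde u^n_j(x)$ attached to $|g^n(x-he^j)|^p$ (respectively $-\tfrac{\tau}{2h}\tilde u^n_j(x)$ to $|g^n(x+he^j)|^p$) reassembles with the correct sign into $+D_j\tilde u^n_j$ rather than $-D_j\tilde u^n_j$; a quick test on a one-dimensional reduction settles this. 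Everything else is routine summation.
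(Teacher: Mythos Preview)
Your proposal is correct and follows essentially the same line as the paper's proof: rewrite \eqref{explicit1} as a convex combination via the CFL condition, deduce the $L^\infty$-bound and comparison principle from monotonicity, apply convexity of $t\mapsto|t|^p$ (the paper phrases this as a discrete H\"older inequality, you as Jensen, which is the same thing here) to get a pointwise inequality for $|g^{n+1}(x)|^p$, and then sum over $h\Z^3$ with an index shift to produce the $D\cdot\tilde u^m$ term.

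One small imprecision worth flagging: you write that ``$g^m$ and $\tilde u^m$ are supported in $\Omega_h$'' to justify replacing sums over $h\Z^3$ by sums over $\Omega_h$. This is true for $g^0$ and for $\tilde u^m$, but \emph{not} for $g^m$ with $m\ge1$---numerical diffusion makes the support of $g^m$ spread beyond $\Omega_h$ (the paper remarks on this just after the proof). What actually makes the equality in \eqref{e-estimate2} work is that $\tilde u^m$ vanishes near $\partial\Omega_h$ and outside $\Omega_h$, so $D\cdot\tilde u^m$ is supported in $\Omega_h$; hence the product $(D\cdot\tilde u^m)|g^m|^p$ is supported there regardless of where $g^m$ lives. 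Similarly, for the summation-by-parts step over $h\Z^3$ you only need that both factors have compact support in $h\Z^3$ (which they do for each fixed $n$), not that $g^n$ vanishes near $\partial\Omega_h$. With this correction, your argument goes through exactly as the paper's does.
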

\begin{proof}
Let $p^\ast$ be the H\"older conjugate of $p\in(1,\infty)$. We first observe that 
\begin{eqnarray*}
|g^0(x)|&=&\Big|\frac{1}{h^3}\int_{C_h(x)}\tilde{f}^0(y)dy\Big|\le \norm f^0\norm_{L^\infty(\Omega)},\quad \forall\, x\in h\Z^3,\\
|g^0(x)|^{p}&\le&\Big(\frac{1}{h^3}\int_{C_h(x)}|\tilde{f}^0(y)|dy\Big)^{p}
\le\Big\{\frac{1}{h^3}\Big(\int_{C_h(x)}|\tilde{f}^0(y)|^pdy\Big)^\frac{1}{p}\Big(\int_{C_h(x)}1^{p^\ast}dy\Big)^\frac{1}{p^\ast}\Big\}^{p}\\
&=&\frac{1}{h^3}\int_{C_h(x)}|\tilde{f}^0(y)|^pdy \quad (\mbox{the case of $p=1$ is also clear}),\\
\norm g^0\norm_{p,h\Z^3}&\le&\norm \tilde{f}^0\norm_{L^p(\R^3)}=\norm f^0\norm_{L^p(\Omega)}\le{\rm vol}(\Omega)^\frac{1}{p}\norm f^0\norm_{L^\infty(\Omega)},\,\,\,\,\forall\,p\in[1,\infty].
\end{eqnarray*}
For each $x\in h\Z^3$, \eqref{explicit1} is equivalent to 
\begin{eqnarray}\label{explicit1-1} 
g^{n+1}(x) &=&  \frac{1}{7}g^n(x)
+\sum_{j=1}^3\Big\{\Big(\frac{1}{7}+\frac{\tau}{2h}\tilde{u}^n_j(x)\Big)g^n(x-he^j)\\\nonumber
&& +\Big(\frac{1}{7}-\frac{\tau}{2h}\tilde{u}^n_j(x)\Big)g^n(x+he^j) \Big\}.
\end{eqnarray}
The truncation \eqref{truncation} and the scaling condition \eqref{scale} imply that 
\begin{eqnarray*}
\frac{1}{7}\pm\frac{\tau}{2h}\tilde{u}^n_j(x)\ge 0,\quad 
 \min_{x\in h\Z^3}g^n(x)\le g^{n+1}(x)\le \max_{x\in h\Z^3}g^n(x), 
\end{eqnarray*}
from which \eqref{e-estimate1} is derived by induction.   

We fix $x\in h\Z^3$ and re-write \eqref{explicit1-1} as 
$$g^{n+1}(x) = \sum_{\omega\in B} g^n(x+h \omega)\rho(\omega),\,\,\,\rho(\omega)\ge 0,\,\,\, \sum_{\omega\in B}\rho(\omega)=1.$$
It is clear that 
$$|g^{n+1}(x)| \le \sum_{\omega\in B} |g^n(x+h \omega)|\rho(\omega).$$
Applying the (discrete) H\"older inequality to the right hand side, we obtain 
\begin{eqnarray*}
&&|g^{n+1}(x)| \le \sum_{\omega\in B} |g^n(x+h \omega)|\rho(\omega)\le \Big(\sum_{\omega\in B}|g^n(x+h \omega)|^p\rho(\omega)\Big)^\frac{1}{p}\Big(\sum_{\omega\in B}1^{p^\ast}\rho(\omega)\Big)^\frac{1}{p^\ast},\\
&&|g^{n+1}(x)|^p\le \sum_{\omega\in B}|g^n(x+h \omega)|^p\rho(\omega),\quad\forall\,p\in(1,\infty),
\end{eqnarray*}
which leads to 
\begin{eqnarray}\label{explicit1-2}
&& |g^{n+1}(x)|^p \le  
 \frac{1}{7}\sum_{\omega\in B}|g^n(x+h \omega)|^p   -\sum_{j=1}^3\tilde{u}^n_j(x) D_j(|g^n|^p)(x)\tau
,\,\,\,\forall\,p\in[1,\infty),\\\nonumber
&& D_j(|g^n|^p)(x)=\frac{|g^n(x+he^j)|^p-|g^n(x-he^j)|^p}{2h}.
\end{eqnarray}
Nothing that $\tilde{u}^n_j$ is equivalently $0$ near $\p\Omega_h$ and $h\Z^3\setminus\Omega_h$, we sum up  \eqref{explicit1-2}$\times h^3$ over $x\in h\Z^3$ with the summation by parts (cf., \eqref{by-parts}) to obtain  
$$\norm g^{n+1}\norm_{p,h\Z^3}^p\le \norm g^n\norm_{p,h\Z^3}^p+\sum_{x\in h\Z^3}(D\cdot\tilde{u}^n(x))|g^n(x)|^ph^3\tau,$$
which yields \eqref{e-estimate2}.

Now, the comparison principle is obvious, since \eqref{explicit1} is linear with respect to the unknown $g^n$ and  $b^n:=\tilde{g}^n-g^n$ satisfies \eqref{explicit1} with $b$ in replace of $g$ and $b^0\ge0$. 
\end{proof}
We remark that the support of $g^n$ may not be uniformly  bounded for $(h,\tau)\to0$ under the generalized hyperbolic scaling.  
\subsection{Convergence of discrete problem (explicit)}

 Hereafter, $O(r)$ stands for a quantity whose absolute value is bounded by $M |r|$ as $r\to0$ with some constant $M\ge 0$ independent of $x$, $n$ and $r\to0$; $M_1,M_2,\ldots$ stand for some positive constants independent of $(h,\tau)$, $x$ and $n$.

We convert \eqref{explicit1} to a weak form. 
Fix an arbitrary test function $\varphi\in C^\infty([0,T]\times\R^3;\R)$ with supp$(\varphi)\subset[0,T)\times\R^3$ compact. Shifting $x$ to $x\mp h \omega$ in $\sum_{x\in h\Z^3}$ with the fact that $\varphi\equiv0$ outside a bounded domain of $\R^3$, we have  
\begin{eqnarray*}
&&\sum_{n=0}^{T_\tau-1}\sum_{x\in h\Z^3} \Big(g^{n+1}(x)-  \frac{1}{7}\sum_{\omega\in B}g^n(x+h \omega)  \Big)\frac{1}{\tau} \varphi(t_n,x) h^3\tau\\
&&\quad =\sum_{n=0}^{T_\tau-1}\sum_{x\in h\Z^3} \Big(g^{n+1}(x) \varphi(t_n,x) -  g^n(x)\frac{1}{7}\sum_{\omega\in B}\varphi(t_n,x-h \omega)  \Big)\frac{1}{\tau} h^3\tau\\
&&\quad=\sum_{n=0}^{T_\tau-1}\sum_{x\in h\Z^3} \Big(g^{n+1}(x) \varphi(t_n,x) -  g^n(x)\varphi(t_n,x) +g^n(x)O(h^2)  \Big)\frac{1}{\tau} h^3\tau\\
&&\quad=\sum_{n=0}^{T_\tau-1}\sum_{x\in h\Z^3} \Big(g^{n+1}(x) \varphi(t_{n+1},x) -  g^n(x)\varphi(t_n,x) \Big) h^3\\
&&\qquad -\sum_{n=0}^{T_\tau-1}\sum_{x\in h\Z^3} g^{n+1}(x)\frac{ \varphi(t_{n+1},x)- \varphi(t_{n},x)}{\tau} h^3\tau
 +\sum_{n=0}^{T_\tau-1}\sum_{x\in h\Z^3}g^n(x)O\Big(\frac{h^2}{\tau}\Big)  h^3\tau\\
 &&\quad =-\sum_{x\in h\Z^3} g^0(x)\varphi(0,x)h^3
  -\sum_{n=0}^{T_\tau-1}\sum_{x\in h\Z^3} g^{n+1}(x)\p_t\varphi(t_{n+1},x)h^3\tau+O(h^{\alpha}),
\end{eqnarray*}
where we also note that $\varphi\equiv0$ near $t=T$ and $g^n$ is bounded.   Similarly, we have 
\begin{eqnarray*}
&&\sum_{n=0}^{T_\tau-1}\sum_{x\in h\Z^3} \sum_{j=1}^3\tilde{u}^n_j(x) D_jg^n(x)\varphi(t_n,x) h^3\tau\\
&&\quad =\sum_{j=1}^3\sum_{n=0}^{T_\tau-1}\sum_{x\in h\Z^3} \tilde{u}^n_j(x) \frac{g^n(x+he^j)-g^n(x-he^j)}{2h}\varphi(t_n,x) h^3\tau\\
&&\quad =\sum_{j=1}^3\sum_{n=0}^{T_\tau-1}\sum_{x\in h\Z^3}
 \Big(\tilde{u}^n_j(x-he^j)g^n(x)\varphi(t_n,x-he^j)\\
&&\qquad  - \tilde{u}^n_j(x+he^j)g^n(x)\varphi(t_n,x+he^j)\Big)\frac{1}{2h} h^3\tau\\
&&\quad =- \sum_{j=1}^3\sum_{n=0}^{T_\tau-1}\sum_{x\in h\Z^3}
D_j\tilde{u}^n_j(x)g^n(x)\varphi(t_n,x-he^j)h^3\tau\\
&&\qquad - \sum_{j=1}^3\sum_{n=0}^{T_\tau-1}\sum_{x\in h\Z^3}
\tilde{u}^n_j(x+he^j)g^n(x)\p_{x_j}\varphi(t_n,x)h^3\tau\\
&&\qquad - \sum_{j=1}^3\sum_{n=0}^{T_\tau-1}\sum_{x\in h\Z^3}
\tilde{u}^n_j(x+he^j)g^n(x)O(h^2)h^3\tau.
\end{eqnarray*}
Therefore, we obtain the weak form of \eqref{explicit1} as
\begin{eqnarray}\label{weak}
0&=& \sum_{x\in h\Z^3} g^0(x)\varphi(0,x)h^3\\\nonumber
&&  +\sum_{n=0}^{T_\tau-1}\sum_{x\in h\Z^3} g^{n+1}(x)\p_t\varphi(t_{n+1},x)h^3\tau\\\nonumber
&&+ \sum_{j=1}^3\sum_{n=0}^{T_\tau-1}\sum_{x\in h\Z^3}
D_j\tilde{u}^n_j(x)g^n(x)\varphi(t_n,x-he^j)h^3\tau\\\nonumber
&& + \sum_{j=1}^3\sum_{n=0}^{T_\tau-1}\sum_{x\in h\Z^3}
\tilde{u}^n_j(x+he^j)g^n(x)\p_{x_j}\varphi(t_n,x)h^3\tau\\\nonumber
&& + \sum_{j=1}^3\sum_{n=0}^{T_\tau-1}\sum_{x\in h\Z^3}
\tilde{u}^n_j(x+he^j)g^n(x)O(h^2)h^3\tau \\\nonumber
&&+O(h^{\alpha}).
\end{eqnarray}
\indent We will show that \eqref{weak} yields \eqref{weak-sol} at the limit of $(h,\tau)\to0$. For this purpose, we investigate asymptotics of $u^n$ and $\tilde{u}^n$ for $(h,\tau)\to0$ under \eqref{scale}. It is well-known (see, e.g., \cite{Lady}) that any function $w\in H^1_0(\Omega;\R)$, $\Omega\subset\R^3$ satisfies 
$$w\in L^q(\Omega),\,\,\,\norm w\norm_{L^q(\Omega)}\le 48^{\frac{\gamma}{6}}\norm \p_xw\norm_{L^2(\Omega)}^\gamma\norm w\norm_{L^2(\Omega)}^{1-\gamma},\,\,\,\forall\,q\in[2,6],\,\,\,\gamma=\frac{3}{2}-\frac{3}{q}.$$
We, then, choose $q=3$ and apply it to our $v\in L^2([0,T];H^1_0(\Omega)^3)\cap L^\infty([0,T];L^2(\Omega)^3)$: by the H\"older inequality, 
\begin{eqnarray*}
\norm v_j(t,\cdot)\norm_{L^3(\Omega)}&\le& 48^{\frac{1}{2}}\norm \p_xv_j(t,\cdot)\norm_{L^2(\Omega)}^\frac{1}{2}\norm v_j(t,\cdot)\norm_{L^2(\Omega)}^{\frac{1}{2}},\\
\int_0^T\norm v_j(t)\norm_{L^3(\Omega)}^3dt
&\le&\int_0^T48^{\frac{3}{2}}\norm \p_xv_j(t,\cdot)\norm_{L^2(\Omega)}^\frac{3}{2}\norm v_j(t,\cdot)\norm_{L^2(\Omega)}^{\frac{3}{2}}dt\\
&\le&48^{\frac{3}{2}}\Big(\int_0^T \norm \p_x v_j(t,\cdot)\norm_{L^2(\Omega)}^2dt\Big)^\frac{3}{4}\Big(\int_0^T \norm  v_j(t,\cdot)\norm_{L^2(\Omega)}^6dt\Big)^\frac{1}{4} \\
&\le& 48^{\frac{3}{2}} T^\frac{1}{4}\norm \p_x v_j\norm_{L^2([0,T];L^2(\Omega))}^\frac{3}{2} \norm  v_j\norm_{L^\infty([0,T];L^2(\Omega))}^\frac{3}{2}\\
&\le&M_1\quad\mbox{ for $j=1,2,3$}.
\end{eqnarray*}
Due to  the estimates
\begin{eqnarray*}
&&|u^n_j(x)|^3\le\Big(\frac{1}{\tau h^3}\Big)^3
\Big(\int_{t^n}^{t_{n+1}} \int_{C_h(x)} |v_j(s,y)|dyds \Big)^3\\
&&\qquad \le\Big(\frac{1}{\tau h^3}\Big)^3
\Big\{
\Big(\int_{t^n}^{t_{n+1}} \int_{C_h(x)} 1^\frac{3}{2} dyds\Big)^\frac{2}{3}
\Big(\int_{t^n}^{t_{n+1}} \int_{C_h(x)} |v_j(s,y)|^3dyds\Big)^\frac{1}{3}
 \Big\}^3\\
 &&\qquad \le \frac{1}{\tau h^3}\int_{t^n}^{t_{n+1}} \int_{C_h(x)} |v_j(s,y)|^3dyds\quad\mbox{ for all $x\in\Omega_h$},\\
 &&\sum_{x\in\Omega_h}|u^n_j(x)|^3h^3\tau \le \int_0^T\norm v_j(t)\norm_{L^3(\Omega)}^3dt,
\end{eqnarray*}
we obtain 
\begin{eqnarray*}
\sum_{n=0}^{T_\tau-1}\sum_{x\in\Omega_h}|u^n_j(x)|^3h^3\tau\le M_1\quad\mbox{for all $(h,\tau)\to0$.}
\end{eqnarray*}
Now, we are ready to estimate the measure of the truncated part given in \eqref{truncation}, where we note that the truncation takes place only within $\Omega_h$:
\begin{eqnarray}\nonumber 
&& A^n_{\delta,j}:=\{x\in\Omega_h\,|\, |u^n_j(x)|>h^{-\beta}  \},\quad\delta:=(h,\tau),\quad
{\rm vol}(A^n_{\delta,j}):=\sum_{x\in A^n_{\delta,j}}h^3,\\\nonumber
&&3M_1\ge \sum_{j=1}^3\sum_{n=0}^{T_\tau-1}\sum_{x\in\Omega_h}|u^n_j(x)|^3h^3\tau 
\ge  \sum_{j=1}^3\sum_{n=0}^{T_\tau-1}\sum_{x\in A^n_{\delta,j}}h^{-3\beta}h^3\tau\\\nonumber
&&\qquad = h^{-3\beta} \sum_{j=1}^3\sum_{n=0}^{T_\tau-1}{\rm vol}(A^n_{\delta,j})\tau,\\\label{measure2}
&&\sum_{j=1}^3\sum_{n=0}^{T_\tau-1}{\rm vol}(A^n_{\delta,j})\tau
 \le 3M_1h^{3\beta}\quad\mbox{for all $(h,\tau)\to0$.}
\end{eqnarray}
Finally, we represent $D_ju_j^n(x)$ in terms of $\p_{x_j}\tilde{v}_j$. Let $\tilde{v}^\ep\in C^\infty((\ep,T-\ep)\times \R^3)$, $\ep>0$ be the $(t,x)$-mollification of $\tilde{v}$ with the standard mollifier on $\R^{4}$. Then, we have 
\begin{eqnarray*}
D_ju_j^n(x)&=&\frac{1}{\tau h^3}\frac{1}{2h}\int_{t_n}^{t_{n+1}}\int_{C_h(x)}\Big(\tilde{v}_j(s,y+he^j)-\tilde{v}_j(s,y-he^j)\Big)dyds\\
&=&\underline{\frac{1}{\tau h^3}\frac{1}{2h}\int_{t_n}^{t_{n+1}}\int_{C_h(x)}\Big\{\Big(\tilde{v}_j(s,y+he^j)-\tilde{v}_j(s,y-he^j)\Big)}\\
&&\qquad \underline{-\Big(\tilde{v}_j^\ep(s,y+he^j)-\tilde{v}_j^\ep(s,y-he^j)\Big)\Big\}dyds}_{\rm (i)}\\
&&+\underline{\frac{1}{\tau h^3}\frac{1}{2h}\int_{t_n}^{t_{n+1}}\int_{C_h(x)}\Big(\tilde{v}_j^\ep(s,y+he^j)-\tilde{v}_j^\ep(s,y-he^j)\Big)dyds,}_{\rm (ii)}\\
{\rm (i)}&\to&0\mbox{\quad as $\ep\to0+$ for each fixed $(h,\tau)$,}\\
{\rm (ii)}&=&\frac{1}{\tau h^3}\frac{1}{2h}\int_{t_n}^{t_{n+1}}\int_{C_h(x)}\int_{-h}^h\p_{x_j}\tilde{v}_j^\ep(s,y+ze^j)dzdyds\\
&=&\underline{\frac{1}{\tau h^3}\frac{1}{2h}\int_{-h}^h\int_{t_n}^{t_{n+1}}\int_{C_h(x)}\Big(\p_{x_j}\tilde{v}_j^\ep(s,y+ze^j)-\p_{x_j}\tilde{v}_j(s,y+ze^j)\Big)dydsdz}_{\rm (iii)}\\
&&+\frac{1}{\tau h^3}\frac{1}{2h}\int_{-h}^h\int_{t_n}^{t_{n+1}}\int_{C_h(x)}\Big(\p_{x_j}\tilde{v}_j(s,y+ze^j)-\p_{x_j}\tilde{v}_j(s,y)\Big)dydsdz\\
&&+\frac{1}{\tau h^3}\frac{1}{2h}\int_{-h}^h\int_{t_n}^{t_{n+1}}\int_{C_h(x)}\p_{x_j}\tilde{v}_j(s,y)dydsdz,\\
{\rm (iii)}&\to&0\mbox{\quad as $\ep\to0+$ for each fixed $(h,\tau)$}.
\end{eqnarray*}    
Therefore, we obtain 
\begin{eqnarray}\label{derivative}
&&D_ju_j^n(x)=\frac{1}{\tau h^3}\int_{t_n}^{t_{n+1}}\int_{C_h(x)}\p_{x_j}\tilde{v}_j(s,y)dyds\\\nonumber 
&&\qquad +\frac{1}{\tau h^3}\frac{1}{2h}\int_{-h}^h\int_{t_n}^{t_{n+1}}\int_{C_h(x)}\Big(\p_{x_j}\tilde{v}_j(s,y+ze^j)-\p_{x_j}\tilde{v}_j(s,y)\Big)dydsdz.
\end{eqnarray}

 We state convergence results. Define the step function $g_\delta:[0,\tau T_\tau+\tau)\times\R^3\to\R$ for each $\delta=(h,\tau)$ (note that $T_\tau\in\N$ is such that $T\in [\tau T_\tau,\tau T_\tau+\tau)$)  as  
\begin{eqnarray*}
g_\delta(s, y)&:=&
g^n(x),\,\,\, \mbox{ if }s\in[t_n,t_{n+1}),\,\,\, y\in C_h(x),\,\,\, x\in h\Z^3.
\end{eqnarray*}
\begin{Prop}[{\it weak convergence}]\label{Thm-weak}
There exists $\tilde{f}\in L^\infty([0,T],L^\infty(\R^3))$ to which  $\{g_\delta\}$ converges, up to subsequence, weakly$^\ast$ to $\tilde{f}$ in $L^\infty([0,T],L^\infty(\R^3))$ as $\delta=(h,\tau)\to0$ under the scaling condition \eqref{scale}. Furthermore, $\tilde{f}$ is the weak solution of \eqref{T2.eq} and $f:=\tilde{f}|_{x\in\Omega}$ is the weak solution of \eqref{T.eq} .  
\end{Prop}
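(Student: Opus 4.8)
The plan is to extract weak$^\ast$ limits from the uniform bounds of Theorem~\ref{Thm-explicit}, pass to the limit in the weak form \eqref{weak}, and then invoke the DiPerna--Lions uniqueness statement (F1) to conclude. First I would note that \eqref{e-estimate1} gives $\norm g_\delta\norm_{L^\infty([0,T]\times\R^3)}\le\norm f^0\norm_{L^\infty(\Omega)}$ uniformly in $\delta$, so by Banach--Alaoglu there is a subsequence along which $g_\delta\rightharpoonup^\ast\tilde f$ in $L^\infty([0,T];L^\infty(\R^3))$ for some $\tilde f$ with the same $L^\infty$-bound; the support of $g_\delta$ spreads by at most $O(\sqrt{\tau T})$ (finite speed of the scheme) so $\tilde f$ is supported in a fixed bounded set, and in particular $g_\delta\rightharpoonup\tilde f$ also in, say, $L^2_{\rm loc}$. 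I would also record that the time-shifted step functions $g^{n+1}$ versus $g^n$ (appearing with different time arguments in \eqref{weak}) have the same weak$^\ast$ limit, since $\tau\to0$ and the functions are uniformly bounded; likewise $\varphi(t_n,x\pm he^j)$, $\p_{x_j}\varphi(t_n,x)$ converge uniformly to $\varphi(t,x)$, $\p_{x_j}\varphi(t,x)$ by smoothness of $\varphi$.

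Next I would pass to the limit term by term in \eqref{weak}. The first term $\sum g^0(x)\varphi(0,x)h^3$ is a Riemann sum for $\int_{\R^3}\tilde f^0\varphi(0,\cdot)$ and converges accordingly (using \eqref{3ini} and $L^1$-convergence of the averaging operator). The second term converges to $\int_0^T\int_{\R^3}\tilde f\,\p_t\varphi\,dxdt$ by the weak$^\ast$ convergence against the (uniformly convergent, compactly supported) test sequence. The $O(h^\alpha)$ and $O(h^2)$-type remainders vanish: the bare $O(h^\alpha)$ term trivially, and for $\sum\tilde u^n_j(x+he^j)g^n(x)O(h^2)h^3\tau$ one uses the $\norm\cdot\norm_{2,\Omega_h}$-bound of $u^n$ (hence of $\tilde u^n$, since truncation only decreases absolute values) together with the bound on $g^n$ and $|O(h^2)|\le Mh^2$, which gives a factor $h^2$ against a uniformly bounded sum. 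The genuinely delicate terms are the two advective ones. For $\sum_{j,n,x}\tilde u^n_j(x+he^j)g^n(x)\p_{x_j}\varphi(t_n,x)h^3\tau$, I would first replace $\tilde u^n_j$ by $u^n_j$ at the cost of $\sum_{j,n}{\rm vol}(A^n_{\delta,j})\tau\cdot h^{-\beta}\cdot(\text{bounds})\le 3M_1h^{3\beta}\cdot h^{-\beta}=O(h^{2\beta})\to0$ by \eqref{measure2} (here $|\tilde u^n_j-u^n_j|\le|u^n_j|$ on $A^n_{\delta,j}$ but also $|\tilde u^n_j|\le h^{-\beta}$, and one splits accordingly), then shift $x+he^j\to x$ and recognize the result as a space-time Riemann sum converging to $\int_0^T\int_{\R^3}\tilde f\,v_j\,\p_{x_j}\varphi$ — this last convergence follows because $u_\delta\to\tilde v$ strongly in $L^2$ (averaging of an $L^2$ function) while $g_\delta\rightharpoonup\tilde f$ weakly and $\p_{x_j}\varphi$ is a fixed smooth factor, so the product converges (strong times weak).

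For the remaining term $\sum_{j,n,x}D_j\tilde u^n_j(x)\,g^n(x)\,\varphi(t_n,x-he^j)h^3\tau$ I would again first pass from $\tilde u^n_j$ to $u^n_j$: on $A^n_{\delta,j}$ the difference $D_j(\tilde u^n_j-u^n_j)$ is controlled crudely by $h^{-1}\cdot 2h^{-\beta}$ over a set of measure $\le\sum{\rm vol}(A^n_{\delta,j})\tau\le 3M_1h^{3\beta}$, giving $O(h^{3\beta-1-\beta})=O(h^{2\beta-1})\to0$ since $\beta>\tfrac12$ — this is exactly where the sharpened $L^3$-measure estimate \eqref{measure2} (rather than the naive $h^{2\beta}$) is indispensable. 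Then I use formula \eqref{derivative} to write $D_ju^n_j(x)$ as the cell-average of $\p_{x_j}\tilde v_j$ plus a translation-difference term; the latter tends to $0$ in $L^1_{t,x}$ by continuity of translation in $L^2$ (or $L^1$), and the former combines with $g^n(x)\varphi(t_n,x-he^j)$ into a Riemann sum converging, via the same strong-times-weak argument (now $\p_{x_j}\tilde v_j\in L^2$ strongly, $g_\delta\rightharpoonup\tilde f$ weakly, $\varphi$ fixed smooth), to $\int_0^T\int_{\R^3}\tilde f\,(\p_{x_j}v_j)\,\varphi$. Summing over $j$ and using $\nabla\cdot v=0$ kills this contribution entirely, which is what makes the limiting identity reduce to \eqref{weak-sol2}. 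I expect the hardest single point to be the elimination of the truncation in this divergence term, i.e.\ making rigorous that $D_j(\tilde u^n_j-u^n_j)$ contributes negligibly: it requires simultaneously the factor $h^{-1}$ from the difference quotient, the truncation level $h^{-\beta}$, and the measure bound $h^{3\beta}$, and only the combination $h^{2\beta-1}$ with the constraint $\beta>\tfrac12$ from \eqref{scale} closes it; the rest is bookkeeping of Riemann sums and strong/weak products. Having shown $\tilde f$ solves \eqref{weak-sol2}, uniqueness in (F1) identifies it with the DiPerna--Lions solution, forces the whole family (not just a subsequence) to converge, and $f:=\tilde f|_\Omega$ solves \eqref{T.eq} by the equivalence established in the introduction.
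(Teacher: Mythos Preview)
Your overall architecture matches the paper's, but two points need correction.

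First, the claim that ``the support of $g_\delta$ spreads by at most $O(\sqrt{\tau T})$'' is false. The scheme enlarges the support by $h$ at each step, so after $T/\tau$ steps the support grows by $hT/\tau = Th^{\alpha-1}\to\infty$ under the scaling $\tau=h^{2-\alpha}$ with $\alpha<1$. The paper explicitly remarks that the support of $g^n$ may not be uniformly bounded. Fortunately this is irrelevant to the proof: every sum in \eqref{weak} is tested against the compactly supported $\varphi$, and weak$^\ast$ compactness in $L^\infty([0,T]\times\R^3)$ needs no support restriction.

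Second, and more seriously, your handling of the divergence term $\sum D_j\tilde u^n_j\,g^n\varphi\,h^3\tau$ has a real gap. You write that ``on $A^n_{\delta,j}$ the difference $D_j(\tilde u^n_j-u^n_j)$ is controlled crudely by $h^{-1}\cdot 2h^{-\beta}$''. This is not true: on the truncation set $|u^n_j|>h^{-\beta}$ with no upper bound, so $|\tilde u^n_j-u^n_j|=|u^n_j|-h^{-\beta}$ is unbounded, and the centred difference $D_j(\tilde u^n_j-u^n_j)$ cannot be controlled pointwise. The paper does \emph{not} split the function; it splits the \emph{domain}. One introduces $\tilde A^n_{\delta,j}:=\{x,x\pm he^j:x\in A^n_{\delta,j}\}$, observes that on $\Omega_h\setminus\tilde A^n_{\delta,j}$ one has $D_j\tilde u^n_j=D_ju^n_j$ exactly, and on $\tilde A^n_{\delta,j}$ bounds $|D_j\tilde u^n_j|\le 2h^{-1-\beta}$ (this bounds $D_j\tilde u$, not $D_j(\tilde u-u)$, which is the whole point: $\tilde u$ is bounded by construction). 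Since $\mathrm{vol}(\tilde A^n_{\delta,j})\le 3\,\mathrm{vol}(A^n_{\delta,j})$, the contribution over the bad set is $\le M\,h^{-1-\beta}\cdot h^{3\beta}=Mh^{2\beta-1}\to0$, recovering your stated rate but by a correct route. On the good set one then invokes \eqref{derivative} and $\nabla\cdot\tilde v=0$ exactly as you describe. With this repair (and dropping the support remark), your argument coincides with the paper's.
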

\begin{proof}
Since $\{g_\delta\}$ is bounded in $L^\infty([0,T],L^\infty(\R^3))$ due to \eqref{e-estimate1}, we have a weakly$^\ast$ convergent subsequence, still denoted by $\{g_\delta\}$ with $\delta\to0$, and  its weak$^\ast$ limit  $\tilde{f}\in L^\infty([0,T],L^\infty(\R^3))$.    

We show that $\tilde{f}$ is the solution of \eqref{T2.eq} via \eqref{weak}.  Each term (except for the last one) of the right hand side of \eqref{weak} is denoted by (i)-(v). Observe that 
\begin{eqnarray*}
{\rm (i)}
&=&\sum_{x\in h\Z^3}\int_{C_h(x)} \tilde{f}^0(y)dy\times \varphi(0,x)\\
& =&\sum_{x\in h\Z^3}\int_{C_h(x)} \tilde{f}^0(y)\varphi(0,y)dy+\sum_{x\in  h\Z^3}\int_{C_h(x)} \tilde{f}^0(y)(\varphi(0,x)-\varphi(0,y))dy\\
& =&\sum_{x\in h\Z^3} \int_{C_h(x)}  \tilde{f}^0(y)\varphi(0,y)dy +O(h)
\to \int_{\R^3} \tilde{f}^0(y)\varphi(0,y)dy\mbox{\quad as $\delta\to0$.}
\end{eqnarray*}
Since $\p_t \varphi\in L^1([0,T];L^1(\R^3))$ with the support compact, we have 
\begin{eqnarray*}
{\rm (ii)}
&=& \sum_{n=0}^{T_\tau-1}\sum_{x\in h\Z^3}
\int_{t_n}^{t_{n+1}}\int_{C_h(x)}g_\delta(s,y)\p_t \varphi(s,y)dyds\\
&&+ \sum_{n=0}^{T_\tau-1}\sum_{x\in h\Z^3}
\int_{t_n}^{t_{n+1}}\int_{C_h(x)}g_\delta(s,y)(\p_t \varphi(t_{n+1},x)-\p_t \varphi(s,y))dyds\\
&=&\int_{0}^{T}\int_{\R^3}g_\delta(s,y)\p_t \varphi(s,y)dyds+O(|\delta|)\\
&\to&\int_{0}^{T}\int_{\R^3}\tilde{f}(s,y)\p_t \varphi(s,y)dyds\mbox{\quad as $\delta\to0$.}
\end{eqnarray*}
Since $|\tilde{u}^n_j(x)|\le |u^n_j(x)|$, we have 
\begin{eqnarray*}
|{\rm (v)}|
&\le & M_2h^2\norm g^n\norm_{\infty,h\Z^3} \sum_{j=1}^3\sum_{n=0}^{T_\tau-1}\sum_{x\in h\Z^3}\int_{t_n}^{t_{n+1}}\int_{C_h(x)}|\tilde{v}_j(s,y)|dyds\\
&\le& M_2h^2\norm f^0\norm_{L^\infty(\Omega)}\sum_{j=1}^3 \norm v_j\norm_{L^1([0,T];L^1(\Omega))}
\to0\mbox{\quad as $\delta\to0$.}
\end{eqnarray*}
\indent In order to deal with (iii), we first observe that 
\begin{eqnarray*}
&&D_j\tilde{u}^n_j(x)=\frac{1}{2}D^+_j\tilde{u}^n_j(x)+\frac{1}{2}D^-_j\tilde{u}^n_j(x),\\
&&\mbox{$x\in A^n_{\delta,j}$, $x+he^j\in A^n_{\delta,j}$ $\Rightarrow$ $|D^+_j\tilde{u}^n_j(x)|\le 2h^{-1-\beta}$},\\
&&\mbox{$x\in A^n_{\delta,j}$, $x+he^j\not\in A^n_{\delta,j}$ $\Rightarrow$ $|D^+_j\tilde{u}^n_j(x)|\le 2h^{-1-\beta}$},\\
&&\mbox{$x\not\in A^n_{\delta,j}$, $x+he^j\in A^n_{\delta,j}$ $\Rightarrow$ $|D^+_j\tilde{u}^n_j(x)|\le 2h^{-1-\beta}$},\\
&&\mbox{$x\not\in A^n_{\delta,j}$, $x+he^j\not\in A^n_{\delta,j}$ $\Rightarrow$ $D^+_j\tilde{u}^n_j(x)=D^+_ju^n_j(x)$},\\
&&\mbox{$x\in A^n_{\delta,j}$, $x-he^j\in A^n_{\delta,j}$ $\Rightarrow$ $|D^-_j\tilde{u}^n_j(x)|\le 2h^{-1-\beta}$},\\
&&\mbox{$x\in A^n_{\delta,j}$, $x-he^j\not\in A^n_{\delta,j}$ $\Rightarrow$ $|D^-_j\tilde{u}^n_j(x)|\le 2h^{-1-\beta}$},\\
&&\mbox{$x\not\in A^n_{\delta,j}$, $x-he^j\in A^n_{\delta,j}$ $\Rightarrow$ $|D^-_j\tilde{u}^n_j(x)|\le 2h^{-1-\beta}$},\\
&&\mbox{$x\not\in A^n_{\delta,j}$, $x-he^j\not\in A^n_{\delta,j}$ $\Rightarrow$ $D^-_j\tilde{u}^n_j(x)=D^-_ju^n_j(x)$}.
\end{eqnarray*}
Hence, defining $\tilde{A}^n_{\delta,j}:=\{x,x+he^j,x-he^j\,|\,x\in A^n_{\delta,j}\}$, we have 
\begin{eqnarray*}
&&\mbox{$x\in \Omega_h\setminus \tilde{A}^n_{\delta,j}$ $\Rightarrow$ $D_j\tilde{u}^n_j(x)=D_ju^n_j(x)$},\\
&&\mbox{$x\in \tilde{A}^n_{\delta,j}$ $\Rightarrow$ $|D_j\tilde{u}^n_j(x)|\le 2h^{-1-\beta}$},\\
&&{\rm vol}(\tilde{A}^n_{\delta,j})\le 3{\rm vol}(A^n_{\delta,j}).
\end{eqnarray*}
Therefore, noting that support of $u^n$ is contained in $\Omegain$,  we have 
\begin{eqnarray*}
{\rm (iii)}&=&\sum_{j=1}^3\sum_{n=0}^{T_\tau-1}\sum_{x\in \Omega_h}
D_j\tilde{u}^n_j(x)g^n(x)\varphi(t_n,x-he^j)h^3\tau\\
&=&\underline{ \sum_{j=1}^3 \sum_{n=0}^{T_\tau-1}\sum_{x\in\Omega_h\setminus \tilde{A}^n_{\delta,j}} D_ju^n_j(x)g^n(x)\varphi(t_n,x-he^j)h^3\tau}_{{\rm (iii)}_1}\\
&&+\underline{\sum_{j=1}^3 \sum_{n=0}^{T_\tau-1}\sum_{x\in \tilde{A}^n_{\delta,j}} D_j\tilde{u}^n_j(x)g^n(x)\varphi(t_n,x-he^j)h^3\tau.}_{{\rm (iii)}_2}
\end{eqnarray*}
By \eqref{measure2} and \eqref{scale}, we have 
\begin{eqnarray*}
|{{\rm (iii)}_2}|\le M_3 h^{-1+2\beta}\to0\mbox{\quad as $\delta\to0$}.
\end{eqnarray*}
Define 
$$\Omega^n_{\delta,j}:=\Omega_h\setminus \tilde{A}^n_{\delta,j},\,\,\,\Theta_{\delta,j}:=\bigcup_{n=0}^{T_r-1}\Big([t_n,t_{n+1})\times\bigcup_{x\in \Omega^n_{\delta,j}}C_h(x)\Big).$$
 It follows from \eqref{derivative} that 
\begin{eqnarray*}
{\rm(iii)}_1&=&\sum_{j=1}^3 \sum_{n=0}^{T_\tau-1}\sum_{x\in\Omega^n_{\delta,j}} \int_{t_n}^{t_{n+1}}\int_{C_h(x)}\p_{x_j}\tilde{v}_j(s,y)dyds\times g^n(x)\varphi(t_n,x-he^j)\\
&&\!\!\!\!\!\!\!\!\!\!\!\!\!\!\!\!\!\!+ \sum_{j=1}^3 \sum_{n=0}^{T_\tau-1}\sum_{x\in\Omega^n_{\delta,j}} \frac{1}{2h}\int_{-h}^h\int_{t_n}^{t_{n+1}}\int_{C_h(x)} \Big(\p_{x_j}\tilde{v}_j(s,y+ze^j)-\p_{x_j}\tilde{v}_j(s,y) \Big)dydsdz\\
&&\!\!\!\!\!\!\!\!\!\!\!\!\!\!\!\!\!\!\,\,\,\,\,\times g^n(x)\varphi(t_n,x-he^j)\\
&=&\underline{\sum_{j=1}^3 \iint_{\Theta_{\delta,j}}\p_{x_j}\tilde{v}_j(s,y) g_\delta(s,y)\varphi(s,y)dyds+O(h)}_{\rm(iii)_{11}} \\
&&\!\!\!\!\!\!\!\!\!\!\!\!\!\!\!\!\!\!+ \underline{\sum_{j=1}^3 \sum_{n=0}^{T_\tau-1}\sum_{x\in\Omega^n_{\delta,j}} \frac{1}{2h}\int_{-h}^h\int_{t_n}^{t_{n+1}}\int_{C_h(x)} \Big(\p_{x_j}\tilde{v}_j(s,y+ze^j)-\p_{x_j}\tilde{v}_j(s,y) \Big)dydsdz}\\
&&\!\!\!\!\!\!\!\!\!\!\!\!\!\!\!\!\!\!\,\,\,\,\,\underline{\times g^n(x)\varphi(t_n,x-he^j)}_{\rm(iii)_{12}}
\end{eqnarray*}
Since the measure of $\Theta_{\delta,j}$ tends to that of $[0,T]\times\Omega$ as $\delta\to0$, we have with the weak$^\ast$ convergence of $g_\delta$,  
\begin{eqnarray*}
{\rm(iii)_{11}} \to 
\sum_{j=1}^3 \iint_{[0,T]\times\Omega}\p_{x_j}\tilde{v}_j(s,y) \tilde{f}(s,y)\varphi(s,y)dyds=0\mbox{\quad as $\delta\to0$,}
\end{eqnarray*}
where $\nabla\cdot \tilde{v}=0$ by assumption. The latter term {\rm(iii)$_{12}$} is estimated as
\begin{eqnarray*}
|{\rm(iii)_{12}}|&\le& \sum_{j=1}^3M_4\frac{1}{2h}\int_{-h}^h\int_0^T\int_\Omega|\p_{x_j}\tilde{v}_j(s,y+ze^j)-\p_{x_j}\tilde{v}_j(s,y)|dydsdz\\
&\le& \sum_{j=1}^3M_4\sup_{|z|\le h}\norm \p_{x_j}\tilde{v}_j(\cdot,\cdot+ze^j)-\p_{x_j}\tilde{v}_j(\cdot,\cdot)\norm_{L^1([0,T];L^1(\Omega))}\\
&\to&0\mbox{\quad as $\delta\to0$.}
\end{eqnarray*}
Thus, we conclude that (iii)$\to0$ as $\delta\to0$.
 
Observe that 
\begin{eqnarray*}
{\rm (iv)}&=&\underline{\sum_{j=1}^3\sum_{n=0}^{T_\tau-1}\sum_{x\in\Omega_h}u^n_j(x+he^j)g^n(x)\p_{x_j}\varphi(t_n,x)h^3\tau}_{{\rm (iv)}_1}\\
&&+\underline{\sum_{j=1}^3\sum_{n=0}^{T_\tau-1}\sum_{x\in\Omega_h}\Big(\tilde{u}^n_j(x+he^j)-u^n_j(x+he^j)\Big)g^n(x)\p_{x_j}\varphi(t_n,x)h^3\tau,}_{{\rm (iv)}_2}\\
{\rm (iv)}_1&=&\sum_{j=1}^3\sum_{n=0}^{T_\tau-1}\sum_{x\in\Omega_h}\int_{t_n}^{t_{n+1}}\int_{C_h(x)}\tilde{v}_j(s,y+he^j) g_\delta(s,y)\p_{x_j}\varphi(s,y)dyds+O(h)\\
&=&\sum_{j=1}^3\int_0^T\int_\Omega \tilde{v}_j(s,y) g_\delta(s,y)\p_{x_j}\varphi(s,y)dyds\\
&&+\sum_{j=1}^3\int_0^T\int_\Omega \Big(\tilde{v}_j(s,y+he^j)-\tilde{v}_j(s,y)\Big) g_\delta(s,y)\p_{x_j}\varphi(s,y)dyds+O(h)\\
&\to&\sum_{j=1}^3\int_0^T\int_\Omega \tilde{v}_j(s,y) \tilde{f}(s,y)\p_{x_j}\varphi(s,y)dyds\mbox{\quad as $\delta\to0$},\\
|{\rm (iv)}_2|&=&\Big|\sum_{j=1}^3\sum_{n=0}^{T_\tau-1}\sum_{x\in\Omega_h}\Big(\tilde{u}^n_j(x)-u^n_j(x)\Big)g^n(x-he^j)\p_{x_j}\varphi(t_n,x-he^j)h^3\tau\Big|\\
&\le&M_5\sum_{j=1}^3\sum_{n=0}^{T_\tau-1}\sum_{x\in A^n_{\delta,j}} |u^n_j(x)|h^3\tau\\
&=&M_5\sum_{j=1}^3\sum_{n=0}^{T_\tau-1}\sum_{x\in A^n_{\delta,j}} \int_{t_n}^{t_{n+1}}\int_{C_h(x)}|v_j(s,y)|dyds\\
&\to&0\mbox{\quad as $\delta\to0$},
\end{eqnarray*}
where we used \eqref{measure2} for the last convergence. 
We conclude that $\tilde{f}$ satisfies \eqref{weak-sol2} to be  the weak solution of \eqref{T2.eq}. As stated in Introduction, $\tilde{f}|_{\R^3\setminus\Omega}=0$ and $f=\tilde{f}|_{x\in\Omega}$ satisfies \eqref{weak-sol} to be the weak solution of  \eqref{T.eq}. 
\end{proof}
\begin{Prop}\label{3Lp}
The sequence $\{g_\delta\}$ is bounded in $L^p([0,T];L^p(\R^3))$ for all $1\le p< \infty$ with 
$$\limsup_{\delta\to0}\norm g_\delta\norm_{L^p([0,T];L^p(\R^3))}\le T^\frac{1}{p}\norm \tilde{f}^0\norm_{L^p(\R^3)}=T^\frac{1}{p}\norm f^0\norm_{L^p(\Omega)},$$
where  $\delta=(h,\tau)\to0$ is taken under  the scaling condition \eqref{scale}.   
\end{Prop}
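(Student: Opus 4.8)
The plan is to derive everything from the discrete $L^p$-bound \eqref{e-estimate2} of Theorem \ref{Thm-explicit}, i.e.\ $\norm g^n\norm_{p,h\Z^3}^p\le\norm f^0\norm_{L^p(\Omega)}^p+\sum_{m=0}^{n-1}\sum_{x\in\Omega_h}(D\cdot\tilde u^m(x))|g^m(x)|^p h^3\tau$, together with $\norm g^0\norm_{p,h\Z^3}^p\le\norm f^0\norm_{L^p(\Omega)}^p$ and the uniform bound $|g^m(x)|^p\le\norm f^0\norm_{L^\infty(\Omega)}^p$ from \eqref{e-estimate1}. The crux is to show that the defect term on the right is $\le E_\delta$ for every $0\le n\le T_\tau$, with $E_\delta\to0$ as $\delta=(h,\tau)\to0$ under \eqref{scale}. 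Granting this, since $\norm g_\delta(s,\cdot)\norm_{L^p(\R^3)}^p=\norm g^n\norm_{p,h\Z^3}^p$ for $s\in[t_n,t_{n+1})$ and $[0,T]\subset[0,(T_\tau+1)\tau)$, I integrate in $s$ to get $\norm g_\delta\norm_{L^p([0,T];L^p(\R^3))}^p\le(T_\tau+1)\tau\,\big(\norm f^0\norm_{L^p(\Omega)}^p+E_\delta\big)$; since $\tau T_\tau\le T$ gives $(T_\tau+1)\tau\le T+\tau$, the right side is uniformly bounded and its $\limsup$ as $\delta\to0$ is $\le T\norm f^0\norm_{L^p(\Omega)}^p$, which after taking $p$-th roots (continuity of $t\mapsto t^{1/p}$) and using $\norm\tilde f^0\norm_{L^p(\R^3)}=\norm f^0\norm_{L^p(\Omega)}$ is exactly the assertion.

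For the defect I would reuse, almost verbatim, the decomposition from the proof of Proposition \ref{Thm-weak}: write $D\cdot\tilde u^m=\sum_{j=1}^3 D_j\tilde u^m_j$ and, for each $j$, split $\sum_{x\in\Omega_h}$ according to whether $x$ lies in the enlarged truncated set $\tilde A^m_{\delta,j}$ (where $|D_j\tilde u^m_j(x)|\le 2h^{-1-\beta}$ and $|g^m(x)|^p\le\norm f^0\norm_{L^\infty(\Omega)}^p$) or in $\Omega_h\setminus\tilde A^m_{\delta,j}$ (where $D_j\tilde u^m_j=D_j u^m_j$). Using ${\rm vol}(\tilde A^m_{\delta,j})\le 3\,{\rm vol}(A^m_{\delta,j})$ and the measure estimate \eqref{measure2}, the contribution of the truncated set to the defect is $\le C\,h^{-1-\beta}\cdot h^{3\beta}=C\,h^{-1+2\beta}$, which $\to0$ because $\beta>1/2$ in \eqref{scale}.

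On $\Omega_h\setminus\tilde A^m_{\delta,j}$ I would insert the representation \eqref{derivative} of $D_j u^m_j(x)$. Its cell-averaged translation remainder contributes, after summing over $m$, at most $C\sum_j\sup_{|z|\le h}\norm\p_{x_j}\tilde v_j(\cdot,\cdot+ze^j)-\p_{x_j}\tilde v_j\norm_{L^1([0,T];L^1(\R^3))}\to0$ by continuity of translation in $L^1$. For the leading cell-average of $\p_{x_j}\tilde v_j$, I first enlarge $\sum_{x\in\Omega_h\setminus\tilde A^m_{\delta,j}}$ back to $\sum_{x\in\Omega_h}$; the correction is bounded by $\norm f^0\norm_{L^\infty(\Omega)}^p\sum_j\int_{F_{\delta,j}}|\p_{x_j}\tilde v_j|\le C\,(h^{3\beta})^{1/2}\norm\p_x\tilde v\norm_{L^2([0,T];L^2(\R^3))}$ over sets $F_{\delta,j}$ of measure $\le 9M_1 h^{3\beta}$ (H\"older, using $\p_x\tilde v\in L^2$). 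Once the sum runs over all $x\in\Omega_h$, I interchange $\sum_j$ with the cell-average to obtain $\sum_{x\in\Omega_h}\big(\int_{t_m}^{t_{m+1}}\!\int_{C_h(x)}(\nabla\cdot\tilde v)\,dy\,ds\big)\,|g^m(x)|^p$, which vanishes since $\nabla\cdot\tilde v=0$ a.e. Adding all these bounds over $m=0,\dots,T_\tau-1$ produces a single $E_\delta\to0$ dominating the defect for all $n\le T_\tau$.

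The argument is largely mechanical once Theorem \ref{Thm-explicit}, \eqref{derivative} and \eqref{measure2} are available; the one conceptual simplification relative to Proposition \ref{Thm-weak} is that no weak-$\ast$ limit of $\{|g_\delta|^p\}$ is needed---the leading divergence term is annihilated exactly by $\nabla\cdot\tilde v=0$ used pointwise, with $|g^m(x)|^p$ playing the passive role that $\varphi$ played before. The points that need a little care are keeping every error estimate uniform in $n$ (automatic, since each bound is obtained after summing over all $m=0,\dots,T_\tau-1$) and tracking the sharp constant through $(T_\tau+1)\tau\to T$, which is what upgrades mere boundedness to the stated $\limsup$ inequality.
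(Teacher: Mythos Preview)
Your proposal is correct and follows essentially the same route as the paper: both split the defect $\sum_{m}\sum_{x}(D\cdot\tilde u^m)|g^m|^p h^3\tau$ into a truncated part controlled by \eqref{measure2} (giving $O(h^{-1+2\beta})$) and an untruncated part handled via \eqref{derivative} together with $\nabla\cdot\tilde v=0$, then let $(T_\tau+1)\tau\to T$. The only cosmetic differences are that you bound the defect uniformly in $n$ before integrating in time (the paper integrates first, producing a double sum $\sum_n\sum_m$), and that your per-$j$ split over $\Omega_h\setminus\tilde A^m_{\delta,j}$ forces the extra ``enlarge back to $\Omega_h$'' step with a H\"older estimate, whereas the paper uses the $j$-independent split $\Omega_h=(\Omega_h\setminus\cup_j\tilde A^m_{\delta,j})\cup(\cup_j\tilde A^m_{\delta,j})$ and so can sum over $j$ directly.
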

\begin{proof}
Define $\Omega^n_\delta:=\cup_{j=1,2,3}\Omega^n_{\delta,j}$ and $A^n_\delta:=\cup_{j=1,2,3}\tilde{A}^n_{\delta,j}$. 
 It follows from \eqref{e-estimate2} that for any $1\le p<\infty$, 
\begin{eqnarray*}
\norm g_\delta\norm_{L^p([0,T];L^p(\R^3))}^p&=&\sum_{n=0}^{T_\tau}\sum_{x\in h\Z^3}|g^n(x)|^ph^3\tau=\sum_{n=0}^{T_\tau}\norm g^n\norm_{p,h\Z^3}^p\tau\\
&\le& (T+\tau)\norm \tilde{f}^0\norm_{L^p(\R^3)}^p+\sum_{n=1}^{T_\tau}\sum_{m=0}^{n-1}\sum_{x\in\Omega_h} D\cdot\tilde{u}^m(x)|g^m(x)|^ph^3\tau^2\\
&=&(T+\tau)\norm f^0\norm_{L^p(\Omega)}^p
+\underline{\sum_{n=1}^{T_\tau}\sum_{m=0}^{n-1}\sum_{x\in\Omega^n_\delta} D\cdot u^m(x)|g^m(x)|^ph^3\tau^2}_{\rm (i)}\\
&&+\underline{\sum_{n=1}^{T_\tau}\sum_{m=0}^{n-1}\sum_{x\in A^n_\delta} D\cdot\tilde{u}^m(x)|g^m(x)|^ph^3\tau^2}_{\rm (ii)}.
\end{eqnarray*} 
 Due to \eqref{e-estimate1} and \eqref{measure2}, we have 
\begin{eqnarray*}
|{\rm (ii)}|&\le&M_6h^{-1+2\beta}\to0\mbox{\quad as $\delta\to0$}.
\end{eqnarray*}
Due to \eqref{derivative} and $\nabla\cdot \tilde{v}=0$, we have 
\begin{eqnarray*}
{\rm (i)}&=&\sum_{n=1}^{T_\tau}\sum_{m=0}^{n-1}\sum_{x\in\Omega^n_\delta} \int_{t_m}^{t_{m+1}}\int_{C_h(x)}\nabla\cdot \tilde{v}(s,y)dyds\times|g^m(x)|^p\tau\\
&&+\sum_{n=1}^{T_\tau}\sum_{m=0}^{n-1}\sum_{x\in\Omega^n_\delta} \sum_{j=1}^3\frac{1}{2h}\int_{-h}^h\int_{t_m}^{t_{m+1}}\int_{C_h(x)}   \Big(\p_{x_j}\tilde{v}(s,y+ze^j)-\p_{x_j}\tilde{v}(s,y) \Big)dydsdz\\
&&\quad \times |g^m(x)|\tau,\\
|{\rm (i)}|&\le& M_7\sum_{j=1}^3\sup_{|z|\le h}\norm \p_{x_j}\tilde{v}(\cdot,\cdot+ze^j)-\p_{x_j}\tilde{v}(\cdot,\cdot)\norm_{L^1([0,T];L^1(\Omega))}\\
&\to&0\mbox{\quad as $\delta\to0$}.
\end{eqnarray*}
\end{proof}
\begin{Thm}[{\it strong convergence}]\label{Thm-strong}
Let $f$, $\tilde{f}$ be the weak solution of \eqref{T.eq}, \eqref{T2.eq}, respectively. Then, 
$\{g_\delta\}$ converges to $\tilde{f}$ strongly in $L^2([0,T];L^2(\R^3))$ as $\delta=(h,\tau)\to0$ (in the whole sequence with the scaling condition \eqref{scale});  
 $\{g_\delta|_{x\in\Omega}\}$ converges to $f$ strongly in $L^p([0,T];L^p(\Omega))$ for any $1\le p<\infty$ as $\delta=(h,\tau)\to0$ (in the whole sequence under the scaling condition \eqref{scale}). 
\end{Thm}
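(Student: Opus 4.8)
The plan is to combine the weak convergence from Proposition \ref{Thm-weak}, the energy estimates of Theorem \ref{Thm-explicit}, and the sharp $L^p$-bound of Proposition \ref{3Lp} via the standard ``weak convergence plus convergence of norms implies strong convergence'' argument in Hilbert spaces. Concretely, work first in $L^2([0,T];L^2(\R^3))$. From Proposition \ref{Thm-weak}, after passing to a subsequence, $g_\delta\rightharpoonup\tilde f$ weakly$^\ast$ in $L^\infty_tL^\infty_x$, hence (since everything is supported in a fixed bounded set up to vanishing mass, and the masses are $L^2$-bounded) also weakly in $L^2([0,T];L^2(\R^3))$. The key quantitative input is that, by Proposition \ref{3Lp} with $p=2$, $\limsup_{\delta\to0}\norm g_\delta\norm_{L^2([0,T];L^2(\R^3))}\le T^{1/2}\norm\tilde f^0\norm_{L^2(\R^3)}$. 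On the other hand, by fact (F1) the exact solution satisfies $\norm\tilde f(t,\cdot)\norm_{L^2(\R^3)}=\norm\tilde f^0\norm_{L^2(\R^3)}$ for a.e.\ $t$, so $\norm\tilde f\norm_{L^2([0,T];L^2(\R^3))}=T^{1/2}\norm\tilde f^0\norm_{L^2(\R^3)}$. Therefore $\limsup_\delta\norm g_\delta\norm_{L^2_{t,x}}\le\norm\tilde f\norm_{L^2_{t,x}}$, while weak lower semicontinuity gives $\norm\tilde f\norm_{L^2_{t,x}}\le\liminf_\delta\norm g_\delta\norm_{L^2_{t,x}}$. Hence $\norm g_\delta\norm_{L^2_{t,x}}\to\norm\tilde f\norm_{L^2_{t,x}}$, and combined with weak convergence this yields $g_\delta\to\tilde f$ strongly in $L^2([0,T];L^2(\R^3))$ via the identity $\norm g_\delta-\tilde f\norm^2=\norm g_\delta\norm^2-2(g_\delta,\tilde f)+\norm\tilde f\norm^2\to 0$.

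Next I would upgrade the subsequential convergence to full-sequence convergence: since the limit $\tilde f$ is the \emph{unique} weak solution of \eqref{T2.eq} (fact (F1)), every subsequence of $\{g_\delta\}$ has a further subsequence converging strongly in $L^2_{t,x}$ to the same $\tilde f$, so the whole family converges. Restricting to $x\in\Omega$ and using $\tilde f|_\Omega=f$ gives strong $L^2([0,T];L^2(\Omega))$ convergence of $g_\delta|_{x\in\Omega}$ to $f$. To promote this to strong $L^p([0,T];L^p(\Omega))$ convergence for every $1\le p<\infty$, I would use the uniform $L^\infty$-bound \eqref{e-estimate1}: the functions $g_\delta|_{x\in\Omega}$ and $f$ all lie in a fixed ball of $L^\infty([0,T]\times\Omega)$ with $\Omega$ bounded, so by interpolation $\norm g_\delta-f\norm_{L^p(\Omega\times[0,T])}\le\norm g_\delta-f\norm_{L^2}^{\theta}\,\big(C\norm f^0\norm_{L^\infty}\big)^{1-\theta}$ for suitable $\theta=\theta(p)\in(0,1]$ when $p\ge 2$, and for $1\le p<2$ one simply uses that $\Omega\times[0,T]$ has finite measure so $L^2\subset L^p$ continuously. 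Either way, $L^2$-strong convergence plus the uniform bound forces $L^p$-strong convergence.

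The main obstacle — really the only nontrivial point, and it is dispatched by the results already proven — is establishing the \emph{sharp} upper bound $\limsup_\delta\norm g_\delta\norm_{L^2_{t,x}}\le\norm\tilde f\norm_{L^2_{t,x}}$ rather than a bound with a constant $>1$; the error terms coming from the truncation (the sets $A^n_{\delta,j}$) and from the non-divergence-free part $D\cdot\tilde u^m$ must all vanish in the limit, which is exactly the content of Proposition \ref{3Lp}, whose proof relied on the measure estimate \eqref{measure2} and the representation \eqref{derivative} together with $\nabla\cdot\tilde v=0$. A secondary, routine point is making sure the weak$^\ast$ convergence in $L^\infty_tL^\infty_x$ can legitimately be paired against $\tilde f\in L^1_{t,x}$ (true because everything is supported, up to asymptotically negligible mass and with uniformly bounded $L^2$-norms, in sets of finite measure, so testing against $\tilde f\mathbf 1_{[0,T]\times\Omega}\in L^1$ is valid and the contribution outside any fixed bounded set is controlled by the vanishing-mass estimate implicit in Proposition \ref{3Lp}). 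I will keep these checks brief, since the heavy lifting has been done in the preceding propositions.
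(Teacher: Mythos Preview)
Your proposal is correct and follows essentially the same route as the paper for the $L^2$-strong convergence: weak convergence (via Proposition~\ref{Thm-weak}/boundedness in $L^2$), the sharp upper bound from Proposition~\ref{3Lp} with $p=2$, the exact norm identity from (F1), and the ``weak convergence plus convergence of norms'' argument in the Hilbert space $L^2_{t,x}$, followed by uniqueness to pass from subsequences to the full sequence.

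The only noteworthy difference is in the upgrade to $L^p([0,T];L^p(\Omega))$. The paper extracts a pointwise-a.e.\ convergent subsequence from the $L^2$-strong convergence and then applies Lebesgue's dominated convergence theorem (with dominating function $(2\norm f^0\norm_{L^\infty})^p$ on the finite-measure set $[0,T]\times\Omega$), concluding with a standard subsequence argument. Your route---interpolation between the $L^2$-convergence and the uniform $L^\infty$-bound for $p\ge2$, and the finite-measure embedding $L^2\hookrightarrow L^p$ for $1\le p<2$---is slightly more direct and avoids the pointwise-a.e.\ extraction; both arguments rely on the same two ingredients (strong $L^2$-convergence and the uniform $L^\infty$-bound \eqref{e-estimate1}) and are equally valid.
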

 \begin{proof}
 Due to Proposition \ref{3Lp}, we see that $\{g_\delta\}$ is bounded in $L^2([0,T];L^2(\R^3))$ and the proof of Proposition \ref{Thm-weak} is valid with the weak convergence of  $\{g_\delta\}$ in $L^2([0,T];L^2(\R^3))$, where the weak limit $\tilde{f}\in L^2([0,T];L^2(\R^3))$ satisfies 
 \begin{eqnarray}\label{3strong}
 \norm \tilde{f}\norm_{L^2([0,T];L^2(\R^3))}\le \liminf_{\delta\to0}  \norm g_\delta\norm_{L^2([0,T];L^2(\Omega))}.
 \end{eqnarray} 
As stated in (F1), the exact weak solution $\tilde{f}\in L^\infty([0,T];L^\infty(\R^3))$ of \eqref{T2.eq},  being regarded as a weak solution of $L^2([0,T];L^2(\R^3))$, satisfies 
 \begin{eqnarray*}
 \norm \tilde{f}(t,\cdot)\norm_{L^2(\R^3)}\equiv\norm \tilde{f}^0\norm_{L^2(\R^3)}=\norm f^0\norm_{L^2(\Omega)},\quad
\norm \tilde{f}\norm_{L^2([0,T];L^2(\R^3))}=T^\frac{1}{2}\norm f^0\norm_{L^2(\Omega)}.
 \end{eqnarray*}
Hence, by Proposition \ref{3Lp} with $p=2$, we have 
\begin{eqnarray}\label{3strong-2}
 \limsup_{\delta\to0}\norm g_\delta\norm_{L^2([0,T];L^2(\R^3))}\le T^\frac{1}{2}\norm f^0\norm_{L^2(\Omega)}=\norm \tilde{f}\norm_{L^2([0,T];L^2(\R^3))}.
 \end{eqnarray}
The $L^2$-strong convergence of $\{g_\delta\}$ to $\tilde{f}$ follows from \eqref{3strong} and \eqref{3strong-2}, where the uniqueness of \eqref{T2.eq} implies that the whole sequence $\{g_\delta\}$ must converge to $\tilde{f}$. 

Let $\{g_\delta\}$ be a subsequence that converges to $\tilde{f}$  pointwise a.e. on $[0,T]\times\R^3$.  Then, $|g_\delta(t,x)-\tilde{f}(t,x)|^p=|g_\delta(t,x)-f(t,x)|^p$  on $[0,T]\times\Omega$, which  converges to $0$ a.e. $(t,x)\in[0,T]\times\Omega$ for any $1\le p<\infty$. 
Since $|g_\delta|$ and $|\tilde{f}|$ are bounded by $\norm f^0\norm_{L^\infty(\Omega)}$, Lebesgue's dominated convergence theorem yields
  $$\norm g_\delta-f\norm_{L^p([0,T];L^p(\Omega))}^p=\int_0^T\int_\Omega|g_\delta(t,x)-f(t,x)|^pdxdt\to0\quad \mbox{ \,\,\, as $\delta\to0$}. $$
If the whole sequence $\{g_\delta|_{x\in\Omega}\}$ does not converges to $f$ in $L^p([0,T];L^p(\Omega))$, we have a subsequence $\{g_{\delta'}\}$ such that $\norm g_{\delta'}-f\norm_{L^p([0,T];L^p(\Omega))}\ge\exists\, \ep_0>0$ for all $\delta'$. However, repeating the above argument for $\{g_{\delta'}\}$, we find a subsequence of $\{g_{\delta'}|_{x\in\Omega}\}$ that converges to $f$ in $L^p([0,T];L^p(\Omega))$, which is a contradiction.  
 \end{proof}
\setcounter{section}{3}
\setcounter{equation}{0}
\section{Implicit method}

In this section, we investigate an implicit finite difference  method of \eqref{T.eq}, where the discrete problem is formulated on $\Omega_h$ with the artificial $0$-boundary condition.  {\it Our implicit scheme is ``non-local'' in the sense that the presence of $h\Z^3\setminus\Omega_h$ does change point-wise  features.} 
Consider initial data $f^0\in L^2(\Omega)$ such that $f^0\equiv 0$ in a neighborhood of $\p\Omega$. Define $g^0:\Omega_h\to\R$ as 
$$g^0(x):=\frac{1}{ h^3}\int_{C_h(x)}f^0(y)\,dy.$$ 
Note that $\norm g^0\norm_{2,\Omega_h}\le \norm f^0\norm_{L^2(\Omega)}$. 
Fix $v\in L^2([0,T];H^1_0(\Omega)^3)\cap L^\infty([0,T];L^2(\Omega)^3)$  such that $\nabla\cdot v=0$, and  supp$(v)\subset[0,T]\times \Omega$. Define $u^n=(u^n_1,u^n_2,u^n_3):\Omega_h\to\R^3$, $n=0,1,\ldots, T_\tau-1$  as 
\begin{eqnarray}\label{impli222}
u^n_i(x):=\frac{1}{\tau h^3}\int_{\tau n}^{\tau(n+1)}\int_{C_h(x)}v_i(s,y)\,dyds. 
\end{eqnarray}
Note that $g^0\equiv 0$, $u^n\equiv0$ near the boundary of $\Omega_h$  for all sufficiently small $h>0$; we always consider such $h>0$; we extend $u^n$ to be $0$ outside $\Omega_h$, where \eqref{impli222} still holds with the $0$-extension of $v$ outside $\Omega$.      

\subsection{Discrete Helmholtz-Hodge decomposition}

Our discussion is based on the (discrete) $L^2$-estimate, where we verify a discrete version of the formal calculation  
$$ \int_\Omega(\p_t f +v\cdot\nabla f)fdx=\frac{d}{dt}\norm f(t,\cdot)\norm_{L^2(\Omega)}^2=0.$$
Here, $\nabla\cdot v=0$ plays an essential role. Unfortunately, the above $u^n=(u^n_1,u^n_2,u^n_3)$ does not satisfy $D\cdot u^n=0$ (nor $D^\pm\cdot u^n=0$) in general, even though $D\cdot u^n$ is asymptotically vanishing as seen in Section 3. In order to have a discrete divergence-free constraint, we operate the discrete Helmholtz-Hodge decomposition to $u^n$ and design a discrete problem with the divergence-free part of $u^n$. We will see that the potential part of $u^n$  is asymptotically vanishing.     

We state the discrete Helmholtz-Hodge decomposition obtained in \cite{Kuroki-Soga}, where the idea is originally introduced by Chorin \cite{Chorin}.     
\begin{Lemma}[\cite{Kuroki-Soga}]\label{Projection}
For each function  $u:\Omega_h\to\R^3$, there exist unique functions $w:\Omega_h\to\R^3$ and  $\phi:\Omega_h\to\R$ such that 
\begin{eqnarray}\label{HHD}
&&D^-\cdot w=0,\quad 
w+D^+ \phi =u \mbox{ on $\Omega_h\setminus \partial\Omega_h$};\quad w=0,\quad  \phi=0 \mbox{ on $\partial\Omega_h$},\\\nonumber
&&\norm w\norm_{2,\Omega_h}^2\le \sum_{x\in\Omegain}|u(x)|^2h^3,\quad
\norm D^+ \phi \norm_{2,\Omega_h}^2\le \sum_{x\in\Omegain}|u(x)|^2h^3,
\end{eqnarray}
where $u$ does not necessarily need to vanish on $\partial \Omega_h$.  If $u|_{\partial\Omega_h}=0$, we have 
\begin{eqnarray}\label{242424242}
\sum_{x\in \Omega_h\setminus \partial\Omega_h}|u(x)-w(x)|^2\le A\sum_{x\in\Omega_h\setminus \partial\Omega_h}|D^-\cdot u(x)|^2, 
\end{eqnarray}
where $A>0$ is a constant depending only on $\Omega$.
\end{Lemma}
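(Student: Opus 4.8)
The plan is to obtain the decomposition variationally, in the spirit of Chorin's projection. Given $u:\Omega_h\to\R^3$, I would first introduce the scalar potential $\phi$ as the solution of a discrete Poisson problem: seek $\phi:\Omega_h\to\R$ with $\phi|_{\p\Omega_h}=0$ solving
\begin{eqnarray*}
D^-\cdot D^+\phi(x)=D^-\cdot u(x)\quad\mbox{on }\Omega_h\setminus\p\Omega_h,
\end{eqnarray*}
which is a finite linear system whose matrix is the discrete Dirichlet Laplacian. Unique solvability follows from the positive-definiteness of that matrix, which in turn comes from the discrete summation-by-parts identity $(D^-\cdot D^+\phi,\phi)_{2,\Omega_h}=-\norm D^+\phi\norm_{2,\Omega_h}^2$ together with a discrete Poincaré inequality on $\Omega_h$ (here I would cite \cite{Kuroki-Soga}, since the statement attributes the result there). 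Then set $w:=u-D^+\phi$ on $\Omega_h\setminus\p\Omega_h$ and $w:=0$, $\phi:=0$ on $\p\Omega_h$; by construction $D^-\cdot w=0$ on the interior, so \eqref{HHD} holds. Uniqueness: if $(w_1,\phi_1)$ and $(w_2,\phi_2)$ both satisfy \eqref{HHD}, then $D^+(\phi_1-\phi_2)=w_2-w_1$ is discrete-divergence-free, and pairing with $\phi_1-\phi_2$ and summing by parts forces $D^+(\phi_1-\phi_2)=0$, hence $\phi_1=\phi_2$ (again via discrete Poincaré, using the vanishing boundary values) and then $w_1=w_2$.

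For the norm bounds in \eqref{HHD}, the key is the orthogonality $(w,D^+\phi)_{2,\Omega_h}=0$, obtained by summing by parts and using $D^-\cdot w=0$ with $w|_{\p\Omega_h}=\phi|_{\p\Omega_h}=0$. This gives the discrete Pythagorean identity $\norm w\norm_{2,\Omega_h}^2+\norm D^+\phi\norm_{2,\Omega_h}^2=\sum_{x\in\Omega_h\setminus\p\Omega_h}|u(x)|^2h^3$ (one must be slightly careful that the sum on the right really runs over interior points only, which is where the hypothesis that the relation $w+D^+\phi=u$ is imposed on $\Omega_h\setminus\p\Omega_h$ enters); both estimates in \eqref{HHD} follow immediately by dropping the nonnegative term.

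For the refined estimate \eqref{242424242}, assume now $u|_{\p\Omega_h}=0$. Here $u-w=D^+\phi$ with $\phi$ solving the discrete Poisson problem above, so $\norm u-w\norm$ is controlled by $\norm D^+\phi\norm$, and testing the Poisson equation against $\phi$ and summing by parts gives $\norm D^+\phi\norm_{2,\Omega_h}^2=(D^-\cdot u,\phi)_{2,\Omega_h}\le\norm D^-\cdot u\norm_{2,\Omega_h}\norm\phi\norm_{2,\Omega_h}$; a discrete Poincaré inequality $\norm\phi\norm_{2,\Omega_h}\le C(\Omega)\norm D^+\phi\norm_{2,\Omega_h}$ then yields $\norm D^+\phi\norm_{2,\Omega_h}\le C(\Omega)\norm D^-\cdot u\norm_{2,\Omega_h}$, which is \eqref{242424242} with $A=C(\Omega)^2$ after squaring and dividing out the $h^3$ factor. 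The main obstacle — and the only genuinely nontrivial analytic input — is the discrete Poincaré inequality with a constant depending only on $\Omega$ and not on $h$; since the whole lemma is quoted from \cite{Kuroki-Soga}, I expect the cleanest route is simply to invoke that reference for the solvability and the $h$-independent Poincaré constant, and to present the orthogonality/Pythagorean computation in detail as it is short and self-contained.
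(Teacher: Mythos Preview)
The paper does not give its own proof of this lemma: it is stated as a result quoted from \cite{Kuroki-Soga}, with the remark that $P_hu$ is ``numerically detective (see the proof in \cite{Kuroki-Soga}).'' So there is nothing to compare against beyond the citation itself.

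That said, your outline is exactly the standard argument behind Chorin's discrete projection and is consistent with how \cite{Kuroki-Soga} proceeds: solve the discrete Dirichlet--Poisson problem $D^-\!\cdot D^+\phi=D^-\!\cdot u$ on $\Omega_h\setminus\partial\Omega_h$, set $w:=u-D^+\phi$ there and $w=\phi=0$ on $\partial\Omega_h$, obtain orthogonality $(w,D^+\phi)_{2,\Omega_h}=0$ via the summation-by-parts identity \eqref{by-parts}, and deduce the norm bounds from the resulting Pythagorean splitting. Your derivation of \eqref{242424242} by testing the Poisson equation against $\phi$ and invoking a discrete Poincar\'e inequality with an $h$-independent constant is also the standard route; the only analytically nontrivial ingredient is precisely that Poincar\'e constant, which is why the present paper simply defers to \cite{Kuroki-Soga}. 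One small point worth tightening if you write this out in full: in the Pythagorean step the identity $u=w+D^+\phi$ holds only on $\Omega_h\setminus\partial\Omega_h$, while $\norm D^+\phi\norm_{2,\Omega_h}$ is a sum over all of $\Omega_h$, so you should check that the boundary contributions of $D^+\phi$ are accounted for (this is handled in \cite{Kuroki-Soga} but is easy to get wrong with the zero-extension conventions).
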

\noindent The function $w$ in \eqref{HHD} is denoted by $P_h u$ and $P_h$ is called the discrete  Helmholtz-Hodge projection.  We remark that $P_h u$ is numerically detective (see the proof  in \cite{Kuroki-Soga}); one can formulate  the discrete Helmholtz-Hodge decomposition also with the central difference \cite{Maeda-Soga}.   
\subsection{Discrete problem (implicit)}
We define $w^n:=P_h u^n$. Our discrete problem  with unknowns $g^1,\ldots,g^{T_\tau}:\Omega_h\to\R$ is: 
\begin{eqnarray}\label{implicit1}
&&\frac{g^{n+1}(x)-g^n(x)}{\tau}
 +\frac{1}{2}\sum_{j=1}^3\Big(w_j^n(x-he^j)D^+_jg^{n+1}(x-he^j)\\\nonumber
&&\qquad\qquad\qquad\qquad\qquad+w_j^n(x)D^+_jg^{n+1}(x)\Big)=0,\quad x\in\Omegain,\\\label{implicit2}
&&g^{n+1}|_{\p\Omega_h}=0.
\end{eqnarray}
We remark that the whole argument on \eqref{implicit1}-\eqref{implicit2} is free from any scaling condition of $h, \tau$; the price to pay is that the comparison principle is not clear.  In the upcoming $L^2$-estimate, our complicated choice of the advection term in \eqref{implicit1} provides null-contribution, where we note that this idea is originally given by Ladyzhenskaya \cite{Lady} in her discrete  approximation of the incompressible Navier-Stokes equations.  
\begin{Thm}\label{implicit}
The equation \eqref{implicit1}-\eqref{implicit2} is uniquely solvable for any $h,\tau$. Furthermore, the solution satisfies 
\begin{eqnarray}\label{L2-estimate}
\norm g^n\norm_{2,\Omega_h}\le \norm g^0\norm_{2,\Omega_h}\le \norm f^0\norm_{L^2(\Omega)},\quad \forall\,n=0,1,\ldots,T_\tau.
\end{eqnarray}
\end{Thm}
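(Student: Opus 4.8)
The plan is to treat the linear system \eqref{implicit1}--\eqref{implicit2} as a finite-dimensional linear map $g^{n+1}\mapsto g^n$ on the space of functions $\Omega_h\to\R$ vanishing on $\p\Omega_h$, and show injectivity (hence invertibility) via the a priori $L^2$-estimate. Concretely, fix $n$ and suppose $g^{n+1}$ solves \eqref{implicit1}--\eqref{implicit2} with a given right-hand side $g^n$. First I would take the discrete inner product of \eqref{implicit1} with $g^{n+1}$ over $\Omegain$, i.e. multiply by $g^{n+1}(x)h^3$ and sum. The time-difference term contributes $\tau^{-1}(g^{n+1}-g^n,g^{n+1})_{2,\Omega_h}$, which I rewrite using the elementary identity $2a(a-b)=a^2-b^2+(a-b)^2\ge a^2-b^2$ to get a lower bound $(2\tau)^{-1}\big(\norm g^{n+1}\norm_{2,\Omega_h}^2-\norm g^n\norm_{2,\Omega_h}^2\big)$.

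The heart of the matter is showing that the advection term in \eqref{implicit1}, when paired with $g^{n+1}$, contributes exactly zero. This is precisely where the symmetric, Ladyzhenskaya-style choice of the discretized advection operator and the discrete divergence-free property $D^-\cdot w^n=0$ from Lemma \ref{Projection} come in. I would expand
$$
\sum_{x\in\Omegain}\frac{h^3}{2}\sum_{j=1}^3\Big(w_j^n(x-he^j)D^+_jg^{n+1}(x-he^j)+w_j^n(x)D^+_jg^{n+1}(x)\Big)g^{n+1}(x),
$$
shift the index $x\mapsto x+he^j$ in the first group (legitimate since $w^n$ and $g^{n+1}$ vanish outside $\Omegain$), and combine to obtain a sum of the form $\tfrac12\sum_{x,j}w_j^n(x)D_j^+g^{n+1}(x)\big(g^{n+1}(x)+g^{n+1}(x+he^j)\big)h^3$. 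Writing $D_j^+g^{n+1}(x)=h^{-1}(g^{n+1}(x+he^j)-g^{n+1}(x))$, the factor $(g^{n+1}(x+he^j)-g^{n+1}(x))(g^{n+1}(x+he^j)+g^{n+1}(x))=g^{n+1}(x+he^j)^2-g^{n+1}(x)^2=D_j^+(|g^{n+1}|^2)(x)\cdot h$ telescopes, so the whole expression becomes $\tfrac12\sum_{x,j}w_j^n(x)D_j^+(|g^{n+1}|^2)(x)h^3$. Summation by parts \eqref{by-parts} then moves $D^+$ onto $w^n$, yielding $-\tfrac12\sum_x (D^-\cdot w^n(x))|g^{n+1}(x)|^2h^3=0$ by $D^-\cdot w^n=0$. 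Combining with the time-difference bound gives $\norm g^{n+1}\norm_{2,\Omega_h}\le\norm g^n\norm_{2,\Omega_h}$, and iterating from $\norm g^0\norm_{2,\Omega_h}\le\norm f^0\norm_{L^2(\Omega)}$ proves \eqref{L2-estimate}.

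For unique solvability: \eqref{implicit1}--\eqref{implicit2} is a square linear system for $g^{n+1}$ given $g^n$, so it suffices to show the homogeneous system ($g^n\equiv0$) has only the trivial solution; but the estimate just derived gives $\norm g^{n+1}\norm_{2,\Omega_h}\le\norm g^n\norm_{2,\Omega_h}=0$, hence $g^{n+1}\equiv0$. Thus the map is injective, hence bijective, and \eqref{implicit1}--\eqref{implicit2} is uniquely solvable for every $h,\tau$; induction on $n$ then produces the full sequence $g^1,\ldots,g^{T_\tau}$ satisfying \eqref{L2-estimate}. The main obstacle is the algebraic bookkeeping in the cancellation of the advection term — getting the index shifts and the pairing of the two half-terms in \eqref{implicit1} exactly right so that the telescoping and the single application of summation by parts produce $-\tfrac12(D^-\cdot w^n,|g^{n+1}|^2)_{2,\Omega_h}$ with no leftover boundary or cross terms; everything else is routine.
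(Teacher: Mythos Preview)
Your proposal is correct and follows essentially the same approach as the paper: take the discrete inner product of \eqref{implicit1} with $g^{n+1}$, show the advection term vanishes via an index shift and the discrete divergence-free constraint $D^-\cdot w^n=0$, and conclude both the $L^2$-estimate and unique solvability. The only cosmetic differences are that the paper first treats the homogeneous system to establish invertibility and then derives $\norm g^{n+1}\norm_{2,\Omega_h}^2=(g^{n+1},g^n)_{2,\Omega_h}\le\norm g^{n+1}\norm_{2,\Omega_h}\norm g^n\norm_{2,\Omega_h}$ via Cauchy--Schwarz, whereas you use the identity $2a(a-b)\ge a^2-b^2$ and deduce uniqueness as a corollary; and the paper expands the advection term into four pieces and cancels directly, while you package the cancellation through the difference-of-squares identity $D_j^+g^{n+1}\cdot(g^{n+1}(\cdot)+g^{n+1}(\cdot+he^j))=D_j^+(|g^{n+1}|^2)$ before a single summation by parts.
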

\begin{proof}
To verify the first assertion, it is enough to prove that \eqref{implicit1}-\eqref{implicit2} is uniquely solvable with respect to $g^{n+1}$, provided $g^n:\Omega_h\to\R$ with   $g^n|_{\p\Omega_h}=0$ is given. 
We label the elements of $\Omega_h\setminus\partial\Omega_h$ as $x^{1},x^{2},\ldots,x^{a}$. Introduce $y,b\in\R^{a}$ as  
\begin{eqnarray*}
y:=\big(g^{n+1}(x^{1}),\ldots,g^{n+1}(x^{a})   \big),\quad b:=\big(g^{n}(x^{1}),\ldots,g^{n}(x^{a})\big).
\end{eqnarray*}
Then,  \eqref{implicit1}-\eqref{implicit2} is equivalent to the linear equations 
$$A(g^n;h,\tau)y=b,$$
where $A(g^n;h,\tau)$  is a $(a\times a)$-matrix depending on $g^n,h,\tau$. 
We prove that the matrix $A(g^n;h,\tau)$ is invertible, i.e.,    $A(g^n;h,\tau)y=0$ has the unique solution $y=0$.  Let $y=y_0$ be a solution of $A(g^n;h,\tau)y=0$. Then, we have $g^{n+1}:\Omega_h\to\R$ with $g^{n+1}|_{\partial\Omega_h}=0$ such that    
\begin{eqnarray*}
g^{n+1}(x)=-\tau\sum_{j=1}^3 \frac{w^n_j(x-he^j)D_j^+g^{n+1}(x-he^j)+w^n_j(x)D_j^+g^{n+1}(x)}{2},\quad x\in\Omega_h\setminus\partial\Omega_h.
\end{eqnarray*}
We take the inner product: noting  the $0$-boundary condition of $g^{n+1}$, we have 
\begin{eqnarray*}
&&(g^{n+1},g^{n+1})_{2,\Omega_h}=\norm g^{n+1}\norm_{2,\Omega_h}^2\\
&&\quad =-\frac{\tau}{2}\sum_{j=1}^3 \sum_{x\in\Omega_h\setminus\partial\Omega_h}\Big(w^n_j(x-he^j)D_j^+g^{n+1}(x-he^j)
 + w^n_j(x)D_j^+g^{n+1}(x)\Big)g^{n+1}(x)h^3\\
&&\quad  =-\frac{\tau}{2}\sum_{j=1}^3 \sum_{x\in\Omega_h\setminus\partial\Omega_h}\Big(w^n_j(x-he^j)\frac{g^{n+1}(x)-g^{n+1}(x-he^j)}{h}\\
&&\qquad\qquad\qquad\qquad +w^n_j(x)\frac{g^{n+1}(x+he^j)-g^{n+1}(x)}{h} \Big)g^{n+1}(x)h^3\\
&&\quad =-\frac{\tau}{2}\sum_{j=1}^3 \sum_{x\in\Omega_h\setminus\partial\Omega_h}
-\frac{w^n_j(x)-w^n_j(x-he^j)}{h}g^{n+1}(x)^2h^3\\
&&\qquad -\frac{\tau}{2}\sum_{j=1}^3 \sum_{x\in\Omega_h}
\frac{1}{h}w^n_j(x)g^{n+1}(x+he^j)g^{n+1}(x)h^3\\
&&\qquad +\frac{\tau}{2}\sum_{j=1}^3 \sum_{x\in\Omega_h} \frac{1}{h}w^n_j(x-he^j)g^{n+1}(x-he^j)g^{n+1}(x)h^3,
\end{eqnarray*} 
where $w^n$ is extended to be $0$ outside $\Omega_h$. Shifting $x$ to $x+he^j$ in the last summation with the $0$-boundary condition, we  obtain 
 \begin{eqnarray*}
\norm g^{n+1}\norm_{2,\Omega_h}^2=\frac{\tau}{2}\sum_{x\in\Omega_h\setminus\partial\Omega_h}\big(D^-\cdot w^n(x)\big)|u^{n+1}(x)|^2h^3=0,\quad {\rm i.e.,}\,\,\,\,y_0=0.
\end{eqnarray*} 
Therefore,  \eqref{implicit1}-\eqref{implicit2} is uniquely solvable with 
$$ \norm g^{n+1}\norm_{2,\Omega_h}^2\le (g^{n+1},g^{n})_{2,\Omega_h}\le  \norm g^{n+1}\norm_{2,\Omega_h} \norm g^{n}\norm_{2,\Omega_h},$$ 
which yields $\norm g^{n+1}\norm_{2,\Omega_h} \le \norm g^{n}\norm_{2,\Omega_h}\le \norm g^{n-1}\norm_{2,\Omega_h}\le\cdots\le\norm g^0\norm_{2,\Omega_h}\le \norm f^0\norm_{L^2(\Omega)}$.
\end{proof}
\subsection{Convergence of discrete problem (implicit)} 

We derive the weak form of \eqref{implicit1}-\eqref{implicit2}. 
Fix an arbitrary test function $\varphi\in C^\infty([0,T]\times\R^3)$ with supp$(\varphi)\subset [0,T)\times\R^3$ compact. Set $\varphi^n:=\varphi(\tau n,\cdot)$, where  $\varphi^{T_\tau}\equiv0$. We extend $w^n$ and $g^n$ to be $0$ outside $\Omega_h$.  Observe that 
\begin{eqnarray*}
&&\!\!\!\!\!\!\!\!\!\!(g^{n+1},\varphi^{n+1})_{2,\Omega_h}=(g^n,\varphi^{n})_{2,\Omega_h}+\tau\Big(g^n,\frac{\varphi^{n+1}-\varphi^n}{\tau}\Big)_{2,\Omega_h}\\
&&-\frac{\tau}{2}\sum_{j=1}^3\sum_{x\in\Omegain} \Big(w^n_j(x-he^j)D_j^+g^{n+1}(x-he^j)+ w^n_j(x)D_j^+g^{n+1}(x)\Big)\varphi^{n+1}(x)h^3.
\end{eqnarray*}
Due to the discrete divergence-free constraint of $w^n$ and the $0$-boundary condition of $g^n$ and $w^n$, we have 
\begin{eqnarray*}
&&\sum_{j=1}^3\sum_{x\in\Omegain} \Big(w^n_j(x-he^j)D_j^+g^{n+1}(x-he^j)+ w^n_j(x)D_j^+g^{n+1}(x)\Big)\varphi^{n+1}(x)h^3\\
&&=\sum_{j=1}^3\sum_{x\in\Omegain} 
\Big(w^n_j(x-he^j)\frac{g^{n+1}(x)-g^{n+1}(x-he^j)}{h}\\
&&\qquad+w^n_j(x)\frac{g^{n+1}(x+he^j)-g^{n+1}(x)}{h} \Big)\varphi^{n+1}(x)h^3\\
&&=\sum_{j=1}^3 \sum_{x\in\Omega_h\setminus\partial\Omega_h}
-\frac{w^n_j(x)-w^n_j(x-he^j)}{h}g^{n+1}(x)\varphi^{n+1}(x)h^3\\
&&\qquad+\sum_{j=1}^3 \sum_{x\in\Omega_h}
\frac{1}{h}w^n_j(x)g^{n+1}(x+he^j)\varphi^{n+1}(x)h^3\\
&&\qquad-\sum_{j=1}^3 \sum_{x\in\Omega_h} \frac{1}{h}w^n_j(x-he^j)g^{n+1}(x-he^j)\varphi^{n+1}(x)h^3\\
&&=\sum_{j=1}^3 \sum_{x\in\Omega_h}
\frac{1}{h}w^n_j(x)g^{n+1}(x+he^j)\varphi^{n+1}(x)h^3\\
&&\qquad-\sum_{j=1}^3 \sum_{x\in\Omega_h} \frac{1}{h}w^n_j(x)g^{n+1}(x)\varphi^{n+1}(x+he^j)h^3\\
&&=-\sum_{j=1}^3 \sum_{x\in\Omega_h} \frac{1}{h}\Big(w^n_j(x)g^{n+1}(x)\varphi^{n+1}(x+he^j)  -     w^n_j(x)g^{n+1}(x)\varphi^{n+1}(x) \Big)h^3\\
&&\qquad+\sum_{j=1}^3 \sum_{x\in\Omega_h}
\frac{1}{h}\Big(w^n_j(x)g^{n+1}(x+he^j)\varphi^{n+1}(x)  -w^n_j(x)g^{n+1}(x)\varphi^{n+1}(x) \Big)h^3\\
&&=-\sum_{j=1}^3 \sum_{x\in\Omega_h} w^n_j(x)g^{n+1}(x)D_j^+\varphi^{n+1}(x)h^3\\
&&\qquad+\sum_{j=1}^3 \sum_{x\in\Omega_h}
\frac{1}{h}\Big(w^n_j(x-he^j)g^{n+1}(x)\varphi^{n+1}(x-he^j)  -w^n_j(x)g^{n+1}(x)\varphi^{n+1}(x) \Big)h^3\\
&&=-\sum_{j=1}^3 \sum_{x\in\Omega_h} w^n_j(x)g^{n+1}(x)D_j^+\varphi^{n+1}(x)h^3\\
&&\qquad+\sum_{j=1}^3 \sum_{x\in\Omega_h}
\frac{1}{h}w^n_j(x-he^j)g^{n+1}(x)\Big(\varphi^{n+1}(x-he^j) -\varphi^{n+1}(x)\Big)h^3\\
&&\qquad-\sum_{j=1}^3 \sum_{x\in\Omega_h} \frac{ w^n_j(x)-w^n_j(x-he^j)}{h}g^{n+1}(x)\varphi^{n+1}(x) h^3\\
&&=-\sum_{j=1}^3 \sum_{x\in\Omega_h} \Big(w^n_j(x)g^{n+1}(x)D_j^+\varphi^{n+1}(x)  + w^n_j(x)u^{n+1}(x+he^j)D_j^+\varphi^{n+1}(x)  \Big)h^3\\
&&=-\sum_{j=1}^3\Big\{\Big(w^n_j(\cdot)g^{n+1}(\cdot),D_j^+\varphi^{n+1}(\cdot) \Big)_{2,\Omega_h} + \Big(w^n_j(\cdot)u^{n+1}(\cdot+he^j),D_j^+\varphi^{n+1}(\cdot)  \Big)_{2,\Omega_h}\Big\}.
\end{eqnarray*} 
Hence, taking summation with respect to $n$ and noting that $\varphi^{T_\tau}=0$, we have 
\begin{eqnarray}\label{weak-implicit}
&&(g^0,\varphi^0)_{2,\Omega_h}+\sum_{n=0}^{T_\tau-1}  \Big(g^n,\frac{\varphi^{n+1}-\varphi^n}{\tau}\Big)_{2,\Omega_h}\tau  
+\sum_{j=1}^3 \sum_{n=0}^{T_\tau-1}\Big\{\Big(w^n_j(\cdot)g^{n+1}(\cdot),D_j^+\varphi^{n+1}(\cdot) \Big)_{2,\Omega_h}  \\\nonumber
&&+ \Big(w^n_j(\cdot)g^{n+1}(\cdot+he^j),D_j^+\varphi^{n+1}(\cdot)  \Big)_{2,\Omega_h}\Big\}\frac{\tau}{2}=0.
\end{eqnarray} 
We show that \eqref{weak-implicit} yields \eqref{weak-sol} at the limit of $(h,\tau)\to0$. 

Let $g_\delta:[0,\tau T_\tau+\tau)\times\Omega\to\R$, $\delta=(h,\tau)$ be the step function generated by the solution of \eqref{implicit1}-\eqref{implicit2} as 
\begin{eqnarray*}
g_\delta(s, y):=\begin{cases}
g^n(x),\,\,\, \mbox{ if }s\in[t_n,t_{n+1}),\,\,\, y\in C_h(x),\,\,\, x\in \Omega_h,\\
0,\,\,\,\,\mbox{ otherwise}.
\end{cases}
\end{eqnarray*}
\begin{Thm}[{\it strong convergence}]\label{implicit-convergence}
The sequence $\{g\}_\delta$ converge to the solution of \eqref{T.eq} strongly in $L^2([0,T];L^2(\Omega))$ as $\delta=(h,\tau)\to0$ (in the whole sequence without any scaling condition). 
\end{Thm}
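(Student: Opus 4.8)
The plan is to follow the same two-step template as in Section~3: first extract a weakly convergent subsequence of $\{g_\delta\}$ and identify its limit as the DiPerna--Lions weak solution of \eqref{T.eq} via the discrete weak form \eqref{weak-implicit}, then upgrade weak convergence to strong convergence by sandwiching the limsup of $\norm g_\delta\norm_{L^2}$ between $\norm f\norm_{L^2([0,T];L^2(\Omega))}$ from above and below. From \eqref{L2-estimate} the family $\{g_\delta\}$ is bounded in $L^\infty([0,T];L^2(\Omega))\subset L^2([0,T];L^2(\Omega))$, so there is a subsequence converging weakly in $L^2([0,T];L^2(\Omega))$ to some $f$. The $L^2$ lower semicontinuity of the norm gives $\norm f\norm_{L^2([0,T];L^2(\Omega))}\le\liminf_{\delta\to0}\norm g_\delta\norm_{L^2([0,T];L^2(\Omega))}$; meanwhile \eqref{L2-estimate} together with $\norm g^0\norm_{2,\Omega_h}\le\norm f^0\norm_{L^2(\Omega)}$ yields $\norm g_\delta\norm_{L^2([0,T];L^2(\Omega))}^2\le(T+\tau)\norm f^0\norm_{L^2(\Omega)}^2$, hence $\limsup_{\delta\to0}\norm g_\delta\norm_{L^2([0,T];L^2(\Omega))}\le T^{1/2}\norm f^0\norm_{L^2(\Omega)}$. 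By fact (F1), the exact weak solution satisfies $\norm f\norm_{L^2([0,T];L^2(\Omega))}=T^{1/2}\norm f^0\norm_{L^2(\Omega)}$, so once $f$ is identified as that solution the two inequalities force $\norm g_\delta\norm_{L^2}\to\norm f\norm_{L^2}$, which combined with weak convergence in the Hilbert space $L^2([0,T];L^2(\Omega))$ gives strong convergence; uniqueness of the DiPerna--Lions solution then promotes this to convergence of the whole sequence.

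The core work is passing to the limit in \eqref{weak-implicit}. The first term $(g^0,\varphi^0)_{2,\Omega_h}$ converges to $\int_\Omega f^0\varphi(0,\cdot)\,dx$ by the same Riemann-sum/mollification argument as for term (i) in Proposition~3.4. The second term is a discrete $\int\!\!\int g_\delta\,\partial_t\varphi$ and converges to $\int_0^T\!\!\int_\Omega f\,\partial_t\varphi\,dxdt$ using weak convergence of $g_\delta$ and uniform continuity of $\partial_t\varphi$ (replacing the difference quotient $\tfrac{\varphi^{n+1}-\varphi^n}{\tau}$ by $\partial_t\varphi$ at cost $O(\tau)$). For the two advection terms one writes $w^n_j = u^n_j - (u^n_j - w^n_j)$ and treats the pieces separately. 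The key analytic input is the estimate \eqref{242424242} of Lemma~4.1: since $u^n|_{\partial\Omega_h}=0$,
\begin{eqnarray*}
\sum_{n=0}^{T_\tau-1}\norm u^n - w^n\norm_{2,\Omega_h}^2\,\tau \le A\sum_{n=0}^{T_\tau-1}\sum_{x\in\Omega_h\setminus\partial\Omega_h}|D^-\!\cdot u^n(x)|^2 h^3\tau,
\end{eqnarray*}
and the right-hand side is controlled exactly as the quantity $D_j u^n_j$ was in \eqref{derivative}: by mollification $D^-\!\cdot u^n$ equals a cell-average of $\nabla\cdot\tilde v=0$ plus a translation-difference remainder that tends to $0$ in $L^1_tL^1_x$, hence (after an extra use of the $L^2$-bound on $u^n$ to handle the $L^2$ versus $L^1$ discrepancy — note $|D^-\!\cdot u^n|$ is pointwise $\le C h^{-1}$ times translated averages of $|v|$, which with the $L^3$-bound of Section~3 gives the needed smallness). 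Thus $u^n-w^n\to0$ in the relevant averaged sense, so every occurrence of $w^n_j$ may be replaced by $u^n_j$ up to a vanishing error; the terms with $u^n_j$ then converge to $\int_0^T\!\!\int_\Omega v_j\,f\,\partial_{x_j}\varphi\,dxdt$ by the same computation as ${\rm (iv)}_1$ in Proposition~3.4, where the extra shift $g^{n+1}(\cdot+he^j)$ versus $g^{n+1}(\cdot)$ and the difference $D_j^+\varphi^{n+1}$ versus $\partial_{x_j}\varphi$ contribute only $O(h)$-type errors after using boundedness of $g_\delta$ and smoothness of $\varphi$. One must also bound the boundary-layer discrepancy between summing over $\Omega_h$ and over $\Omega$, which vanishes since ${\rm vol}(\Omega\setminus\bigcup_{x\in\Omega_h}C_h(x))\to0$. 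Assembling these limits turns \eqref{weak-implicit} into \eqref{weak-sol}, identifying $f$ as the weak solution of \eqref{T.eq}.

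I expect the main obstacle to be the control of the potential part $u^n-w^n$: unlike in Section~3, where the truncation was explicit and the divergence error appeared only through the clean identity \eqref{derivative}, here the Helmholtz--Hodge projection is implicit and one only has the $L^2$-to-$L^2$ bound \eqref{242424242} in terms of $\norm D^-\!\cdot u^n\norm_2$. Showing that $\sum_n\norm D^-\!\cdot u^n\norm_{2,\Omega_h}^2\tau\to0$ (not merely stays bounded) requires combining the mollification representation of $D^-\!\cdot u^n$ with the $L^3_tL^3_x$ bound on $v$ from Section~3 to beat the $h^{-1}$ factor hidden in the finite-difference operator — this is the analogue of why the authors needed the improved measure estimate $O(h^{3\beta})$ rather than $O(h^{2\beta})$ there. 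Once that smallness is in hand, the remaining estimates are routine adaptations of the Section~3 arguments, and the strong-convergence upgrade via the norm sandwich is immediate.
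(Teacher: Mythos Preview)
Your overall architecture matches the paper exactly: weak compactness from \eqref{L2-estimate}, identification of the limit via \eqref{weak-implicit}, then the norm sandwich using (F1) to upgrade to strong convergence and uniqueness for the full sequence. The treatment of the first two terms in \eqref{weak-implicit} and the splitting $w^n_j=u^n_j-(u^n_j-w^n_j)$ with the bound \eqref{242424242} are also what the paper does.

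However, your anticipated ``main obstacle'' rests on a misconception that would send you down an unnecessary and unworkable detour. You write that the translation remainder in the representation of $D^-\!\cdot u^n$ tends to zero only in $L^1_tL^1_x$, and that to get the $L^2$ control required by \eqref{242424242} one must invoke the $L^3_tL^3_x$ bound on $v$ from Section~3 to ``beat the $h^{-1}$ factor.'' This is not so. The identity analogous to \eqref{derivative} reads, after the divergence-free part drops out,
\[
D^-\!\cdot u^n(x)=\sum_{j=1}^3\frac{1}{\tau h^3}\,\frac{1}{h}\int_{-h}^{0}\!\!\int_{t_n}^{t_{n+1}}\!\!\int_{C_h(x)}\bigl(\partial_{x_j}v_j(s,y+ze^j)-\partial_{x_j}v_j(s,y)\bigr)\,dy\,ds\,dz,
\]
so the object being translated is $\partial_{x_j}v_j$, not $v_j$. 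Since $v\in L^2([0,T];H^1_0(\Omega)^3)$, each $\partial_{x_j}v_j$ lies in $L^2([0,T];L^2(\Omega))$, and a direct Cauchy--Schwarz on the $(z,s,y)$ integral (which absorbs the $h^{-1}$ via the $z$-average of length $h$) gives
\[
\sum_{n=0}^{T_\tau-1}\norm D^-\!\cdot u^n\norm_{2,\Omega_h}^2\tau\;\le\;3\sum_{j=1}^3\sup_{|z|\le h}\norm \partial_{x_j}v_j(\cdot,\cdot+ze^j)-\partial_{x_j}v_j\norm_{L^2([0,T];L^2(\Omega))}^2\;\longrightarrow\;0
\]
by continuity of translations in $L^2$. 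There is no $L^2$/$L^1$ discrepancy to repair and the $L^3$ embedding plays no role in the implicit method. Your proposed crude bound $|D^-\!\cdot u^n|\le Ch^{-1}\times(\text{averages of }|v|)$ discards the cancellation from $\nabla\cdot v=0$ and would give only $\sum_n\norm D^-\!\cdot u^n\norm_2^2\tau=O(h^{-2})$, which does not close. Once you use the correct $L^2$ translation argument on $\nabla v$, the remaining error terms (the shift $u^n_j(\cdot)-u^n_j(\cdot+he^j)$ and the $O(h)$ from $D_j^+\varphi$) are handled the same way and the proof goes through.
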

\begin{proof}
By \eqref{L2-estimate}, the sequence $\{g_\delta\}$ is bounded in  $L^2([0,T];L^2(\Omega))$. There exist a subsequence, still denoted by $\{g_\delta\}$, and a function $f\in L^2([0,T];L^2(\Omega))$ such that  $\{g_\delta\}$ converges to $f$ weakly as $\delta\to0$. 

We first show that  $f$ is the solution of \eqref{T.eq}. The  terms of the left hand side of  \eqref{weak-implicit} are denoted by (a), (b), (c), respectively. Hereafter, $O(r)$ stands for a quantity whose absolute value is bounded by $M |r|$ with some constant $M\ge0$ independent of $x,n$ and $r\to0$; $M_1,M_2,\ldots$ stand for some positive constants independent of $\delta=(h,\tau)$, $x$ and $n$.

It is clear that  
$${\rm (a)}\to\int_{\Omega}f^0(y)\varphi(0,y)dy,\quad {\rm (b)}\to\int_0^T\int_\Omega f(s,y)\p_t\varphi(s,y)dyds\mbox{\quad as $\delta\to0$}.$$
Observe  that  
\begin{eqnarray*}
&&\Big(w^n_j(\cdot)g^{n+1}(\cdot+he^j),D_j^+\varphi^{n+1}(\cdot)  \Big)_{2,\Omega_h}
=\Big(u^n_j(\cdot)g^{n+1}(\cdot+he^j),D_j^+\varphi^{n+1}(\cdot)  \Big)_{2,\Omega_h}\\
&&\qquad +\underline{\Big((w^n_j(\cdot)-u^n_j(\cdot))g^{n+1}(\cdot+he^j),D_j^+\varphi^{n+1}(\cdot)  \Big)_{2,\Omega_h}}_{\rm(i)}\\
&&\quad = \Big(u^n_j(\cdot+he^j)g^{n+1}(\cdot+he^j),D_j^+\varphi^{n+1}(\cdot+he^j)  \Big)_{2,\Omega_h}\\
&&\qquad + \underline{\Big((u^n_j(\cdot)-u^n_j(\cdot+he^j))g^{n+1}(\cdot+he^j),D_j^+\varphi^{n+1}(\cdot+he^j)  \Big)_{2,\Omega_h}}_{\rm(ii)}\\
&&\qquad+\underline{\Big(u^n_j(\cdot)g^{n+1}(\cdot+he^j),D_j^+\varphi^{n+1}(\cdot)-D_j^+\varphi^{n+1}(\cdot+he^j)  \Big)_{2,\Omega_h}}_{\rm(iii)}+{\rm(i)}\\
&&\quad = \Big(u^n_j(\cdot)g^{n+1}(\cdot),D_j^+\varphi^{n+1}(\cdot)  \Big)_{2,\Omega_h}+{\rm(i)}+{\rm(ii)}+{\rm(iii)}.
\end{eqnarray*}
By \eqref{242424242}, we have 
\begin{eqnarray*}
&&|{\rm(i)}|\le M_1\norm g^{n+1}\norm_{2,\Omega_h}\norm \norm D^-\cdot u^n\norm_{2,\Omega_h}\le  M_1\norm f^0\norm_{L^p(\Omega)} \norm D^-\cdot u^n\norm_{2,\Omega_h}.
\end{eqnarray*}
Reasoning similar to the proof of  \eqref{derivative} yields 
\begin{eqnarray*}
&&D^-\cdot u^n(x)=\frac{1}{\tau h^3}\int_{t_n}^{t_{n+1}}\int_{C_h(x)}\nabla\cdot v(s,y)dyds\\
&&\qquad+\sum_{j=1}^3\frac{1}{\tau h^3}\frac{1}{h}\int^{0}_{-h}\int_{t_n}^{t_{n+1}}\int_{C_h(x)} \Big(\p_{x_j}v_j(s,y+ze^j)-\p_{x_j}v_j(s,y)\Big)dydsdz\\
&&\quad=\sum_{j=1}^3\frac{1}{\tau h^3}\frac{1}{h}\int^{0}_{-h}\int_{t_n}^{t_{n+1}}\int_{C_h(x)} \Big(\p_{x_j}v_j(s,y+ze^j)-\p_{x_j}v_j(s,y)\Big)dydsdz,\\
&&|D^-\cdot u^n(x)|^2\le 
\Big(\sum_{j=1}^3\frac{1}{\tau h^3}\frac{1}{h}\int^{0}_{-h}\int_{t_n}^{t_{n+1}}\int_{C_h(x)} |\p_{x_j}v_j(s,y+ze^j)-\p_{x_j}v_j(s,y)|dydsdz\Big)^2\\
&&\quad \le \Big\{\sum_{j=1}^3\Big(\frac{1}{\tau h^3}\frac{1}{h}\Big)\Big(\int^{0}_{-h}\int_{t_n}^{t_{n+1}}\int_{C_h(x)} |\p_{x_j}v_j(s,y+ze^j)-\p_{x_j}v_j(s,y)|^2dydsdz\Big)^\frac{1}{2}\Big\}^2\\
&&\quad\le 3\sum_{j=1}^3\frac{1}{\tau h^3}\frac{1}{h}\int^{0}_{-h}\int_{t_n}^{t_{n+1}}\int_{C_h(x)} |\p_{x_j}v_j(s,y+ze^j)-\p_{x_j}v_j(s,y)|^2dydsdz.
\end{eqnarray*}
Hence, we see that 
\begin{eqnarray*}
&&\sum_{n=0}^{T_\tau-1}\norm D^-\cdot u^n(x)\norm_{2,\Omega_h}^2\tau= \sum_{n=1}^{T_\tau-1}\sum_{x\in\Omega_h} |D^-\cdot u^n(x)|^2h^3\tau\\
&&\quad \le  3\sum_{j=1}^3\frac{1}{h}\int^{0}_{-h}\int_{0}^T\int_{\Omega} |\p_{x_j}v_j(s,y+ze^j)-\p_{x_j}v_j(s,y)|^2dydsdz\\
&&\quad\le  3\sum_{j=1}^3\sup_{0\le z\le h}\norm \p_{x_j}v_j(\cdot,\cdot+ze^j)-\p_{x_j}v_j(\cdot,\cdot)\norm_{L^1([0,T];L^2(\Omega))}^2\quad \to0 \mbox{\quad as $\delta\to0$},\\
&&\sum_{n=0}^{T_\tau-1}|{\rm(i)}|\tau\le T^\frac{1}{2}\Big(\sum_{n=0}^{T_\tau-1}|{\rm(i)}|^2\tau\Big)^\frac{1}{2} 
\le M_2\Big(\sum_{n=0}^{T_\tau-1}\norm D^-\cdot u^n(x)\norm_{2,\Omega_h}^2\tau\Big)^\frac{1}{2}
\to0 \mbox{\quad as $\delta\to0$}.
\end{eqnarray*}
Similarly, we have 
\begin{eqnarray*}
&&|{\rm(ii)}|\le M_1\norm g^{n+1}\norm_{2,\Omega_h}\Big(\sum_{x\in\Omega_h} |u^n(x+he^j)-u^n(x)|^2h^3   \Big)^\frac{1}{2}\\
&&\quad\le M_1\norm f^0\norm_{L^2(\Omega)}
\Big\{\sum_{x\in\Omega_h}    
\Big(\frac{1}{\tau h^3}\int_{t_n}^{t_{n+1}}\int_{C_h(x)} |v_j(s,y+he^j)-v_j(s,y)|dyds\Big)^2h^3
  \Big\}^\frac{1}{2}\\
&&\quad\le M_1\norm f^0\norm_{L^2(\Omega)}
\Big(    
\frac{1}{\tau}\int_{t_n}^{t_{n+1}}\int_{\Omega} |v_j(s,y+he^j)-v_j(s,y)|^2dyds
  \Big)^\frac{1}{2},\\
&&\sum_{n=0}^{T_\tau-1}|{\rm(ii)}|\tau \le  T^\frac{1}{2}\Big(\sum_{n=0}^{T_\tau-1}|{\rm(ii)}|^2\tau\Big)^\frac{1}{2}
\le M_3 \norm v_j(\cdot,\cdot+he^j)-v_j(\cdot,\cdot)\norm_{L^2([0,T];L^2(\Omega))}\\
&&\quad \to0 \mbox{\quad as $\delta\to0$}.
\end{eqnarray*}
Finally, we have  
\begin{eqnarray*}
&&|{\rm(iii)}|\le M_4 \norm u^n_j\norm_{2,\Omega_h}h
\le M_4h\Big\{\sum_{x\in\Omega_h} \Big( \frac{1}{\tau h^3} \int_{t_n}^{t_{n+1}}\int_{C_h(x)} |v_j(s,y)|dyds  \Big)^2h^3\Big\}^\frac{1}{2}\\
&& \le M_4 h\Big( \frac{1}{\tau} \int_{t_n}^{t_{n+1}}\int_{\Omega} |v_j(s,y)|^2dyds  \Big)^\frac{1}{2},\\
&&\sum_{n=0}^{T_\tau-1}|{\rm(iii)}|\tau \le  T^\frac{1}{2}\Big(\sum_{n=0}^{T_\tau-1}|{\rm(iii)}|^2\tau\Big)^\frac{1}{2}
\le M_5h \norm v_j\norm_{L^2([0,T];L^2(\Omega))}\\
&& \to0 \mbox{\quad as $\delta\to0$}.
\end{eqnarray*}
Therefore, applying the same reasoning to $(w^n_j(\cdot)g^{n+1}(\cdot),D_j^+\varphi^{n+1}(\cdot)  )_{2,\Omega_h}$, we see that 
$${\rm (c)}=\sum_{j=1}^3\sum_{n=0}^{T_\tau-1}  \Big(u^n_j(\cdot)g^{n+1}(\cdot),D_j^+\varphi^{n+1}(\cdot)  \Big)_{2,\Omega_h} \tau+ R_\delta,\quad R_\delta\to0 \mbox{\quad as $\delta\to0$}.
$$
Observe that 
\begin{eqnarray*}
&&\sum_{n=0}^{T_\tau-1}  \Big(u^n_j(\cdot)g^{n+1}(\cdot),D_j^+\varphi^{n+1}(\cdot)  \Big)_{2,\Omega_h} \tau\\
&&\quad =\sum_{n=0}^{T_\tau-1} \sum_{x\in\Omega_h} \int_{t_n}^{t_{n+1}}\int_{C_h(x)} v_j(s,y)dyds\times g^{n+1}(x)D_j^+\varphi^{n+1}(x) \\
&&\quad =\sum_{n=0}^{T_\tau-1} \sum_{x\in\Omega_h} \int_{t_n}^{t_{n+1}}\int_{C_h(x)} v_j(s,y) g_\delta(s,y) \p_{x_j}\varphi(s,y)dyds \\
&&\qquad +\sum_{n=0}^{T_\tau-1} \sum_{x\in\Omega_h} \int_{t_n}^{t_{n+1}}\int_{C_h(x)} v_j(s,y) g_\delta(s,y) O(h) dyds \\
&&\quad =\int_0^T\int_{\Omega}v_j(s,y) g_\delta(s,y) \p_{x_j}\varphi(s,y)dyds+O(h).
\end{eqnarray*}
Thus, the weak convergence of $\{g_\delta\}$ implies that 
$${\rm (c)}\to \int_0^T\int_{\Omega}v(s,y) f(s,y) \cdot \nabla\varphi(s,y)dyds\mbox{\quad as $\delta\to0$},$$ 
which conclude that $f$ satisfies \eqref{weak-sol} to be the weak solution of \eqref{T.eq}. 

The property of weak convergence,  (F1) and \eqref{L2-estimate} imply that 
\begin{eqnarray*}
&&T^\frac{1}{2}\norm f^0\norm_{L^2(\Omega)}=\norm f \norm_{L^2([0,T];L^2(\Omega))}\le \liminf_{\delta\to0} \norm g_\delta\norm_{L^2([0,T];L^2(\Omega))},\\
 && \limsup_{\delta\to0} \norm g_\delta\norm_{L^2([0,T];L^2(\Omega))}\le T^\frac{1}{2}\norm f^0\norm_{L^2(\Omega)}.
\end{eqnarray*}
Therefore, the above weak convergence is in fact strong convergence. The uniqueness of $f$ verifies the whole sequence convergence.    
\end{proof}
\setcounter{section}{4}
\setcounter{equation}{0}
\section{Application to the level-set method}

We apply our explicit scheme discussed in Section 3, assuming in addition that the velocity field $v$ belongs to $ C^4([0,T]\times\bar{\Omega};\R^3)$  and initial data $f^0$ belongs to $ C^4(\bar{\Omega};\R)$ (in other words,  the $0$-extensions $\tilde{v}$ and $\tilde{f}_0$ are assumed to be $C^4$-smooth). In actual situations, $v$ would be first given as an element of $C^4([0,T]\times\overline{\Omega_0})$ with $v|_{\p\Omega_0}=0$ and $\Omega_0\subset\R^3$ open, where supp$(v)$ is not necessarily compact; the $0$-extension of such a function would lose smoothness at $\p\Omega_0$; hence, one needs to consider a $C^4$-smooth extension of $v$ in such a way that $v\equiv0$ for all $x\in\R^3\setminus \Omega$ with some bounded open set $\Omega$ containing $\overline{\Omega_0}$ (the same to $f^0$). In the current paper, we do not discuss how to construct such $C^4$-extensions.   

In the smooth case, \eqref{T.eq} is analyzed by the flow $X=X(s,\tau,\xi)$ of the ODE
\begin{eqnarray}\label{5ODE}
x'(s)=v(s,x(s)),\quad x(\tau)=\xi,\quad \tau\in[0,T],\,\,\,\xi\in \bar{\Omega},
\end{eqnarray}
where $X(s,\tau,\xi):=x(s)$ for each $(s,\tau,\xi)\in [0,T]\times[0,T]\times\bar{\Omega}$ and $X$ is $C^4$-smooth in $(s,\tau,\xi)$. By the method of characteristics, the solution $f$ of \eqref{T.eq} is given as 
$$f(t,x)=f^0(X(0,t,x)):[0,T]\times\Omega\to\R,$$
which is $C^4$-smooth and $f\equiv0$ in a neighborhood of $\p\Omega$. The whole space problem
\begin{eqnarray*}
&&x'(s)=\tilde{v}(s,x(s)),\quad x(\tau)=\xi,\quad \tau\in[0,T],\,\,\,\xi\in \R^3
\end{eqnarray*}
has the $C^4$-smooth flow $\tilde{X}$ such that 
$\tilde{X}(s,\tau,\xi)=X(s,\tau,\xi)$ if $\xi\in\bar{\Omega}$, and $\tilde{X}(s,\tau,\xi)\equiv\xi$ if $\xi\not\in\Omega$.  
 The whole space problem 
\begin{eqnarray*}
&&\p_t\tilde{f}+\tilde{v}\cdot\nabla\tilde{f}=0\mbox{ \,\,\,in $[0,T]\times\R^3$,\quad $\tilde{f}(0,\cdot)=\tilde{f}^0$ on $\R^3$} 
\end{eqnarray*}
has the unique $C^4$-solution $\tilde{f}(t,x)=\tilde{f}^0(\tilde{X}(0,t,x))$, which is the $0$-extension of the above $f$ outside $\bar{\Omega}$. In the rest of this section, we do not distinguish $v,f^0,f,X$ and $\tilde{v},\tilde{f}^0,\tilde{f},\tilde{X}$.     

We first demonstrate error estimates for  our explicit scheme up to the second order $x$-derivatives of $f$, where  $C^4$-smoothness is necessary for better error bounds. Then, we apply the results to approximate the level-set defined by 
$$\Gamma(t):=\{ x\in\Omega \,|\,f(t,x)=1\},\quad t\ge0$$ 
with additional assumptions on $f^0$. The error estimates  up to the second order $x$-derivatives of $f$ yield good  approximation of the unit normal vectors,  the mean curvature and the area element of $\Gamma(t)$. 

Throughout this section, the truncation argument with $\tilde{u}^n$ given in \eqref{truncation} and the generalized hyperbolic scaling are not necessary, since $v$ is a priori bounded, i.e., one can take the standard hyperbolic scale $\tau=\frac{2}{7}\norm v\norm_{L^\infty([0,T]\times\Omega)}^{-1} h$. Nevertheless, we argue under the generalized hyperbolic scale condition $\tau=O(h^{2-\alpha})$ with any $0<\alpha\le1$ to include the case  \eqref{scale} ($\alpha=1$ is not essential).  Below, ``$(h,\tau)\to0$'' always means $(h,\tau)\to0$ under this scale condition.  

Hereafter, $O(r)$ stands for a quantity whose absolute value is bounded by $M |r|$ with some constant $M\ge0$ independent of $x,n$ and $r\to0$; $M_1,M_2,\ldots$ stand for some positive constants independent of $\delta=(h,\tau)$, $x$ and $n$.
 
\subsection{Error estimate for explicit scheme (smooth case)} 

The argument below inductively shows that the explicit scheme \eqref{naive} and the difference equations derived from the discrete $x$-differentiation of \eqref{naive}  approximate $f$ and its $x$-derivatives up to the $k$-th order ($k\ge0$) with the error bound $O(h^\alpha)$, provided $f$ is $C^{k+2}$-smooth. We only demonstrate the case of $k=2$. It is essential that the explicit scheme does not involve any boundary condition in space.      

By Taylor's approximation and \eqref{T.eq}, we have for any $x\in h\Z^3$ and $0\le n\le T_\tau-1$, 
\begin{eqnarray*}
f(t_{n+1},x)=\frac{1}{7}\sum_{\omega\in B}f(t_n,x+h \omega)-u^n(x)\cdot Df(t_n,x)\tau+O(\tau^2)+O(h^2)+O(\tau h), 
\end{eqnarray*}
where $O(\tau^2)+O(h^2)+O(\tau h)=O(h^\alpha\tau )$. 
Let $g^n(\cdot)$ be the solution of the explicit problem \eqref{naive} and set $b^n(x):=g^n(x)-f(t_n,x)$. Then, we have  $b^0(x)=O(h)$ on $h\Z^3$ and 
\begin{eqnarray*}
b^{n+1}(x)&=&\frac{1}{7}\sum_{\omega\in B}b^n(x+h \omega)-u^n(x)\cdot Db^n(t_{n,x})\tau+O(h^\alpha \tau )\mbox{\quad on $h\Z^3$}.
\end{eqnarray*}
The CFL-condition \eqref{3CFL} implies that
\begin{eqnarray}\label{error1}
&& \max_{x\in h\Z^3}|b^{n+1}(x)|\le  \max_{x\in h\Z^3}|b^{n}(x)|+M_1h^\alpha\tau\\\nonumber
&&\quad \le \max_{x\in h\Z^3}|b^{n-1}(x)|+2M_1h^\alpha\tau\\\nonumber
&&\quad\le \cdots\le \max_{x\in h\Z^3}|b^{0}(x)|+M_1T h^\alpha \le M_2h^\alpha,\quad 0\le \forall\,n\le T_\tau-1,\mbox{\quad  $\forall\,(h,\tau)\to0$}. 
\end{eqnarray} 
\indent Observe that $f_i:=\p_{x_i }f$ and  $g^n_i:=D^+_ig^n$ ($i=1,2,3$) satisfy
\begin{eqnarray}\label{fi}
&&\p_tf_i+v\cdot \nabla f_i+\p_{x_i}v\cdot \nabla f=0\mbox{\quad in $[0,T]\times\R^3$},\\\nonumber
&&g^{n+1}_i(x)=\frac{1}{7}\sum_{\omega\in B}g^n_i(x+h \omega)-u^n(x+he_i)\cdot Dg^n_i(x)\tau\\\nonumber
&&\qquad +\frac{1}{2}\sum_{j=1}^3D^+_iu^n_j(x)\Big(g^n_j(x)+g^n_j(x-he^j)\Big)\tau\quad\mbox{ on $h\Z^3$},
\end{eqnarray}
respectively. By Taylor's approximation and \eqref{fi}, we have 
\begin{eqnarray*}
&&f_i(t_{n+1},x)=\frac{1}{7}\sum_{\omega\in B}f_i(t_n,x+h \omega)-u^n(x+he_i)\cdot Df_i(t_n,x)\tau\\\nonumber
&&\qquad +\frac{1}{2}\sum_{j=1}^3D^+_iu^n_j(x)\Big(f_j(t_n,x)+f_j(t_n,x-he^j)\Big)\tau+O(h^\alpha\tau).
\end{eqnarray*}
Hence, $b^n_i(x):=g^n_i(x)-f_i(t_n,x)$ and $R^n:=\max_{i=1,2,3,\,\,x\in h\Z^3}|b^n_i(x)|$ satisfy  
\begin{eqnarray*}
b^{n+1}_i(x)&=&\frac{1}{7}\sum_{\omega\in B}b^n_i(x+h \omega)-u^n(x+he_i)\cdot Db^n_i(t_{n,x})\tau\\
&&\quad +\frac{1}{2}\sum_{j=1}^3D^+_iu^n_j(x)\Big(b^n_j(x)+b^n_j(x-he^j)\Big)\tau+O(h^\alpha\tau),\\
|b^{n+1}_i(x)|&\le& R^n
+\max_{j=1,2,3,\,\,x\in h\Z^3}|D^+_iu^n_j(x)| R^n\tau +M_3 h^\alpha\tau,\quad \forall\,x\in h\Z^3, \\
R^{n+1}&\le&   (1+M_4\tau) R^n+M_3 h^\alpha\tau.     
\end{eqnarray*}
Therefore,  noting that $R^0=O(h)$, we obtain 
\begin{eqnarray}\label{error2}
\max_{x\in h\Z^3}|b^n_i(x)|&\le& R^n\le e^{M_4 T}\Big(R^0+\frac{M_3}{M_4}h^\alpha\Big )\\\nonumber
& \le& M_5 h^\alpha,\quad  0\le \forall\,n \le T_\tau, \mbox{\quad  $\forall\,(h,\tau)\to0$}.
\end{eqnarray}
\indent Observe that $f_{ij}:=\p_{x_j }\p_{x_i }f$,  $g^n_{ij}:=D^-_j(D^+_ig^n$) ($i,j=1,2,3$) satisfy
\begin{eqnarray}\label{fij}
&&\p_tf_{ij}+v\cdot \nabla f_{ij}+\p_{x_j}v\cdot \nabla f_{i}+\p_{x_i}v\cdot \nabla f_j+\p_{x_j}\p_{x_i}v\cdot \nabla f=0\mbox{\quad in $[0,T]\times\R^3$},\\\nonumber
&&g^{n+1}_{ij}(x)=\frac{1}{7}\sum_{\omega\in B}g^n_{ij}(x+h \omega)-u^n(x+he_i)\cdot Dg^n_{ij}(x)\tau\\\nonumber
&&\qquad +\frac{1}{2}\sum_{k=1}^3D^-_ju^n_k(x+he^j)\Big(g^n_{ik}(x-he^j+he^k)+g^n_{ik}(x-he^j)\Big)\tau\\\nonumber
&&\qquad +\frac{1}{2}\sum_{k=1}^3D^+_iu^n_k(x)\Big(g^n_{jk}(x-he^j+he^k)+g^n_{jk}(x-he^j)\Big)\tau\\\nonumber
&&\qquad +\frac{1}{2}\sum_{k=1}^3D^-_jD^+_iu^n_k(x)\Big(g^n_{k}(x-he^j)+g^n_{k}(x-he^j-he^k)\Big)\tau\mbox{\quad on $h\Z^3$},
\end{eqnarray}
respectively. By Taylor's approximation and \eqref{fij}, we have 
\begin{eqnarray*}
&&f_{ij}(t_{n+1},x)=\frac{1}{7}\sum_{\omega\in B}f_{ij}(t_n,x+h \omega)-u^n(x+he_i)\cdot Df_{ij}(t_n,x)\tau\\\nonumber
&&\qquad +\frac{1}{2}\sum_{k=1}^3D^-_ju^n_k(x+he^j)\Big(f_{ik}(t_n,x-he^j+he^k)+f_{ik}(t_n,x-he^j)\Big)\tau\\\nonumber
&&\qquad +\frac{1}{2}\sum_{k=1}^3D^+_iu^n_k(x)\Big(f_{jk}(t_n,x-he^j+he^k)+f_{jk}(t_n,x-he^j)\Big)\tau\\\nonumber
&&\qquad +\frac{1}{2}\sum_{k=1}^3D^-_jD^+_iu^n_k(x)\Big(f_{k}(t_n,x-he^j)+f_{k}(t_n,x-he^j-he^k)\Big)\tau+O(h^\alpha\tau).
\end{eqnarray*}
Set $b^n_{ij}(x):=g^n_{ij}(x)-f_{ij}(t_n,x)$ and $\tilde{R}^n:=\max_{i,j=1,2,3,\,\,x\in h\Z^3}|b^n_{ij}(x)|$. Since we already know that $b^n_k(x)=O(h^\alpha)$ for $k=1,2,3$, we have   
\begin{eqnarray*}
|b^{n+1}_{ij}(x)|\le \tilde{R}^{n+1}\le \tilde{R}^n+M_6\tau \tilde{R}^n+M_7h^\alpha \tau.
\end{eqnarray*}
Therefore,  we obtain 
\begin{eqnarray}\label{error3}
\max_{x\in h\Z^3}|b^n_{ij}(x)|\le M_8 h^\alpha,\quad  0\le \forall\,n \le T_\tau,\mbox{\quad $\forall\,(h,\tau)\to0$}.
\end{eqnarray}

\subsection{Approximation of smooth level-set} 

Suppose in addition that  initial data $f^0$ satisfies the following conditions:  
\begin{itemize}
\item There exists a connected open set $\Omega^+$ such that $\overline{\Omega^+}\subset\Omega$, $f^0(x)>1$ in $\Omega^+$ and $f^0(x)<1 $ in $\Omega^-:=\Omega\setminus \overline{\Omega^+}$,
\item $\nabla f_0(x)\neq0$ on $\Gamma(0):=\p\Omega^+$.
\end{itemize}
It follows from the representation $f(t,x)=f^0(X(0,t,x))$ that 
\begin{eqnarray*}
&&\Omega^+(t):=\{x\in\Omega\,|\,f(t,x)>1\}=\{x\in\Omega\,|\,X(0,t,x)\in\Omega^+\},\\
&&\Gamma(t):=\{x\in\Omega\,|\,f(t,x)=1\}=\{ x\in\Omega\,|\, X(0,t,x)\in\Gamma(0)\}=\p\Omega^+(t)=X(t,0,\Gamma(0)).
\end{eqnarray*}
Hence, the identity $X(t,0,X(0,t,x))\equiv x$ implies that 
$$\nabla f(t,x)=\nabla f^0(X(0,t,x))\p_\xi X(0,t,x)\neq 0,\,\,\,\,\forall\,x\in\Gamma(t),\,\,\,\forall\,t\in[0,T].$$
 Therefore, $\Gamma(t)$ is a connected closed $C^4$-surface for each $t\in[0,T]$.    
Define the set for $\mu_0\ge0$, 
$$ \Theta_{\mu_0}:=\bigcup_{0\le t\le T} \{t\}\times(\Gamma(t)+\mu_0), \,\,\,\,\Gamma(t)+\mu_0:=\{ y\in\Omega\,|\,|y-x|\le \mu_0,\,\,\,x\in\Gamma(t) \}.$$
 There exist constants $\mu_0>0$ and  $0<C_1\le C_2$  such that $ \Theta_{\mu_0}\subset[0,T]\times\Omega$ and 
 \begin{eqnarray}\label{5grad}
C_1\le |\nabla f(t,x)|\le C_2,\quad\forall\,(t,x)\in \Theta_{\mu_0}.
\end{eqnarray}
If necessary, we re-take $\mu_0>0$ small enough so that the intersection of $\Gamma(t)$ and the $\mu_0$-ball with the center  $x\in \Gamma(t)$ is ``almost flat'' for all $x\in \Gamma(t)$. 
Define the unit vector field $\nu(t,x)$ and a function $m(t,x)$ as  
 $$\nu(t,x):=\frac{\nabla f(t,x)}{|\nabla f(t,x)|}:\Theta_{\mu_0}\to \R^3,\quad m(t,x):=-\nabla\cdot \nu(t,x):\Theta_{\mu_0}\to\R.$$   
Note that {\it the restriction of $\nu$ on $\{t\}\times\Gamma(t)$ is a unit normal of $\Gamma(t)$; the value $m(t,x)$ of $x\in\Gamma(t)$ is the mean curvature of $\Gamma(t)$ at $x$ in the direction of  $\nu(t,x)$;  
$$m(t,x)=-\frac{1}{|\nabla f(t,x)|}\Big(\Delta f(t,x)-\sum_{i,j=1}^3 \p_{x_i}\p_{x_j}f(t,x)\nu_{i}(t,x)\nu_{j}(t,x)\Big),$$
where $\Delta=\p_{x_1}^2+\p_{x_2}^2+\p_{x_3}^2$. }
From now on, we suppose that $\nu(t,\cdot)|_{\Gamma(t)}$ is the outer unit normal of $\Gamma(t)$ with respect to $\Omega^+(t)$ (otherwise, we re-define $\nu$ as $-\nabla f(t,x)/|\nabla f(t,x)|$). 
For each $\tilde{x}\in \Gamma(t)$, at least one of $\p_{x_i}f(\tilde{x})$, $i=1,2,3$ is away from $0$, say, $i=3$; then, $ \Gamma(t)$ is represented in a neighborhood of $\tilde{x}$ by the hight function as $x_3=\varphi(t,x_1,x_2)$, $(x_1,x_2)\in O$ with some open set $O\subset \R^2$;  the area element $dS(x)$ of  $ \Gamma(t)$ is represented in the  neighborhood of $\tilde{x}$ as 
\begin{eqnarray*}
dS(x)&=&\sqrt{\p_{x_1}\varphi(t,x_1,x_2)^2+\p_{x_2}\varphi(t,x_1,x_2)^2+1}\,\,dx_1dx_2\\
&=&\frac{1}{|\p_{x_3}f(t,x)|}\sqrt{\p_{x_1}f(t,x)^2+\p_{x_2}f(t,x)^2+\p_{x_3}f(t,x)^2}\,\,dx_1dx_2,
\end{eqnarray*}
which is also represented, if $\p_{x_i}f(t,x)\neq0$, as  
\begin{eqnarray*}
dS(x)&=&\frac{1}{|\p_{x_i}f(t,x)|}\sqrt{\p_{x_j}f(t,x)^2+\p_{x_k}f(t,x)^2+\p_{x_i}f(t,x)^2}\,\,dx_jdx_k
\end{eqnarray*}
with $\{i,j,k\}=\{1,2,3\}$.
We refer to \cite{Giga} and \cite{pruss} for more details on differential geometry of moving surfaces.

Now, we introduce discrete objects corresponding to the above. Define 
\begin{eqnarray*}
&&\Omega^n_+:=\{ x\in\Omega_h \,|\,g^n(x)>1 \},\quad 
\bar{\Omega}^n_+:=\{  x+h\omega \,|\,x\in \Omega^n_+,\,\,\,\omega\in B\},\\
&&\Gamma^n:=\p\bar{\Omega}^n_+.
\end{eqnarray*}
$\Gamma^n$ and $\Gamma^{n+1}$ are ``connected'' in the following sense:  {\it for each $y\in\Gamma^{n+1}$ (then, $g^{n+1}(y)\le 1$ and $g^{n+1}(y+h\omega)> 1$ for some $\omega\in B$ by definition), the set $K(y):=\{ y+h\omega\,|\,\omega \in B\}$ contains at least one element of $\Gamma^n$}. In fact, $K(y)\subset \Omega^n_+$ is not possible,  since the CFL-condition implies $g^{n+1}(y)>1$; if $K(y)\cap \Gamma^n=\emptyset$, we must have $g^n(y+h\omega)\le 1$ and  $g^n(y+h\omega+h\omega')\le 1$ for any $\omega,\omega'\in B$; then, due to the CFL condition, we necessarily have $g^{n+1}(y+h\omega)\le1$ for all $\omega\in B$, which is not allowed.  

Define for $A,A'\subset\R^3$,
 $${\rm dist}(A,A'):=\sup_{y\in A}{\rm dist}(y,A'),\quad {\rm dist}(y,A'):=\inf_{x\in A'}|y-x|,$$  
where we have ${\rm dist}(A,A')=0\Leftrightarrow \overline{A}\subset\overline{A'}$. 
\begin{Thm}\label{level1}
There exists a constant $M_9>0$ independent from $(h, \tau)$ such that  
\begin{eqnarray*}
\max_{0\le n\le T_\tau}{\rm dist}(\Gamma^n,\Gamma(t_n))\le M_9h^\alpha\mbox{\quad for all $(h,\tau)\to0$}.
\end{eqnarray*}
\end{Thm}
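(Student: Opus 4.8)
The plan is to use the uniform error bound on the scheme—estimate \eqref{error1}, which gives $\max_{x\in h\Z^3}|g^n(x)-f(t_n,x)|\le M_2 h^\alpha$ for all $0\le n\le T_\tau$—together with the non-degeneracy \eqref{5grad} of $\nabla f$ in a neighborhood $\Theta_{\mu_0}$ of the exact level-set to convert the $C^0$-closeness of $g^n$ to $f$ into geometric closeness of $\Gamma^n$ to $\Gamma(t_n)$. First I would fix $n$ and take any $y\in\Gamma^n$; by the definition of $\Gamma^n=\p\bar\Omega^n_+$ there are grid points $y',y''\in K(y):=\{y+h\omega\,|\,\omega\in B\}$ with $g^n(y')>1$ and $g^n(y'')\le 1$ (one from $\Omega^n_+$ side, one not), and $|y-y'|,|y-y''|\le \sqrt3\,h$. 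Feeding the error bound into these inequalities yields $f(t_n,y')>1-M_2h^\alpha$ and $f(t_n,y'')\le 1+M_2h^\alpha$, so on the tiny ball around $y$ the continuous function $f(t_n,\cdot)$ takes values both at least $1-M_2h^\alpha$ and at most $1+M_2h^\alpha$; since $f(t_n,\cdot)$ is Lipschitz (being $C^4$ on $\bar\Omega$, with gradient bounded by the constant in \eqref{error2}), $f(t_n,y)=1+O(h^\alpha)$.

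Second I would invoke \eqref{5grad}: as long as $y$ lies in $\Theta_{\mu_0}$ (which holds for $h$ small, since $y$ is within $O(h)$ of a point where $g^n>1$ and by the first step $f(t_n,y)$ is within $O(h^\alpha)$ of $1$, forcing $y$ into a shrinking tube around $\Gamma(t_n)$), the lower bound $|\nabla f(t_n,\cdot)|\ge C_1$ holds on the whole segment from $y$ to its nearest point on $\Gamma(t_n)$. Since $f(t_n,\cdot)\equiv 1$ on $\Gamma(t_n)$ and $|f(t_n,y)-1|\le M h^\alpha$, a one-dimensional mean-value argument along the gradient-flow line (or simply: $\mathrm{dist}(y,\Gamma(t_n))\le C_1^{-1}|f(t_n,y)-1|$ by the co-area/implicit-function estimate valid in the non-degenerate tube) gives $\mathrm{dist}(y,\Gamma(t_n))\le M_9 h^\alpha$ with $M_9:=C_1^{-1}(M_2+\text{Lip})$ or similar, uniformly in $n$ and $y$. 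Taking the sup over $y\in\Gamma^n$ and then over $n$ yields the claim.

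The one delicate point is the bootstrap that keeps $y\in\Theta_{\mu_0}$ so that \eqref{5grad} is applicable: a priori $y\in\Gamma^n$ only tells us $y$ is near a grid point where $g^n>1$, and we need that this pins $y$ into the tube $\Gamma(t_n)+\mu_0$ where the gradient is controlled. I would handle this by choosing $h$ small enough that $M_2 h^\alpha<\tfrac{1}{2}$ (say), so that $g^n(y')>1$ forces $f(t_n,y')>\tfrac12$; combined with the fact that outside $\Theta_{\mu_0/2}$ one has $|f(t_n,\cdot)-1|$ bounded below by a fixed positive constant (this follows from compactness, the representation $f(t,x)=f^0(X(0,t,x))$, and the strict inequalities $f^0>1$ on $\Omega^+$, $f^0<1$ on $\Omega^-$ together with $\nabla f^0\ne 0$ on $\Gamma(0)$), every $y\in\Gamma^n$ must in fact lie in $\Theta_{\mu_0/2}\subset\Theta_{\mu_0}$ once $h$ is small. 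This is the main obstacle—everything else is the routine Lipschitz/non-degeneracy estimate—and it is why the theorem is phrased for all $(h,\tau)\to 0$ rather than for every fixed mesh. The time-uniformity is automatic because \eqref{error1} and \eqref{5grad} are uniform in $n\le T_\tau$.
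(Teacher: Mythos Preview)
Your proposal is correct and follows essentially the same route as the paper: show $|f(t_n,y)-1|=O(h^\alpha)$ for every $y\in\Gamma^n$, bootstrap into the tube $\Theta_{\mu_0}$ via a compactness lower bound on $|f-1|$ outside the tube, and then convert via the gradient lower bound \eqref{5grad}. The only cosmetic differences are that the paper uses the discrete Lipschitz bound on $g^n$ coming from \eqref{error2} (rather than the Lipschitz bound on $f$) to pin down $g^n(y)$ near $1$, and it realises the final distance estimate by explicitly picking $y^\ast\in\Gamma(t_n)$ along the normal $\nu(t_n,y^\ast)$ and Taylor-expanding $f(t_n,\cdot)$ to second order, which also produces the specific foot-point $y^\ast=y^\ast(y)$ used in \eqref{normal} for the subsequent theorems.
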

\begin{proof}
Since $f\equiv0$ in a neighborhood of $\p\Omega$, \eqref{error1} implies that $\Gamma^n$ is far away from $\p\Omega_h$ for all sufficiently small $h>0$. 
Fix any $y\in\Gamma^n$.  We have $g^n(y)\le 1$ and $g^n(y+h\omega)>1$ for some $\omega\in B$. The estimate \eqref{error2} implies that $|g^n(\tilde{y})-g^n(\tilde{y}+h\omega)|\le M_{10}h$ for any $\tilde{y}$ and $n$. Hence, with \eqref{error1}, we have  
\begin{eqnarray*}
&& 1-M_{10}h<g^n(y+h\omega)-M_{10}h\le g^n(y),\\
&&-M_2 h^\alpha\le f(t_n,y)-g^n(y)\le f(t_n,y)-(1-M_{10}h),\\
&& f(t_n,y)-1\le f(t_n,y)-g^n(y)\le M_2h^\alpha,\\
&&| f(t_n,y)-1|\le M_{11}h^\alpha.
\end{eqnarray*}
Since $|f(t,x)-1|>0$ for all $x\not\in\Gamma(t)$ and $t\in[0,T]$, there exists a constant $\ep_0>0$ such that 
$$(t,x)\in[0,T]\times\Omega,\,\,\,\,|f(t,x)-1|<\ep_0\Rightarrow (t,x)\in \Theta_{\mu_0}.$$ 
 If $h>0$ is small enough to satisfy $ M_{11}h^\alpha< \ep_0$, we have $(t_n,y)\in \Theta_{\mu_0}$; furthermore, there exist a point $y^\ast=y^\ast(y)\in\Gamma(t_n)$ and a number $\mu\in[-\mu_0,\mu_0]$ such that $y-y^\ast=\mu \nu(t_n,y^\ast)$.  
Therefore, it follows from \eqref{5grad} that 
\begin{eqnarray*}
 M_{11}h^\alpha \ge | f(t_n,y)-1|=| f(t_n,y)-f(t_n,y^\ast)|\ge C_1|y-y^\ast|-M_{12}|y-y^\ast|^2,
\end{eqnarray*}
which leads to $|y-y^\ast|\le M_{13}h^\alpha$ (we must choose $0<\mu_0\ll C_1$ for this inequality). Since dist$(y,\Gamma(t_n))\le M_{13}h^\alpha$ and $y\in\Gamma^n$ is arbitrary, we obtain the assertion. 
\end{proof}
Theorem \ref{level1} and \eqref{error2} imply that $D^+g^n\neq0$ on $\Gamma^n$ for all sufficiently small $h>0$. Hence, the following quantities are well-defined: 
\begin{eqnarray*}
&&\nu^n(y):=\frac{D^+g^n(y)}{|D^+g^n(y)|},\quad y\in\Gamma^n,\\
&&m^n(y):=-\frac{1}{|D^+g^n(y)|}\Big( \sum_{i=1}^3D^2_ig^n(y)-\sum_{i,j=1}^3D^-_iD^+_j g^n(y) \nu^n_i(y)\nu^n_j(y) \Big),\quad y\in\Gamma^n.
\end{eqnarray*}
Let $y^\ast=y^\ast(y)$ with $y\in\Gamma^n$ be the point of $\Gamma(t_n)$ given in the proof of Theorem \ref{level1}, i.e.,  
\begin{eqnarray}\label{normal}
y^\ast-y=\mu \nu(t_n,y^\ast),\,\,\,\, |y^\ast-y|=|\mu|\le M_{13}h^\alpha,
\end{eqnarray}
where $M_{13}$ is independent from $(h,\tau)$, $n$ and $y\in\Gamma^n$.   
\begin{Thm}\label{level2}
There exists constants $M_{14},M_{15}>0$ independent from $(h,\tau)$ such that 
\begin{eqnarray*}
&&\max_{y\in\Gamma^n,\,0\le n\le T_\tau}|\nu^n(y)-\nu(t_n,y^\ast)|\le M_{14}h^\alpha, \\
&&\max_{y\in\Gamma^n,\,0\le n\le T_\tau}|m^n(y)-m(t_n,y^\ast)|\le M_{15}h^\alpha\quad\mbox{ for all $(h,\tau)\to0$}.
\end{eqnarray*}
\end{Thm}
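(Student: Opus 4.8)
The plan is to propagate the pointwise error bounds \eqref{error1}, \eqref{error2} and \eqref{error3} through the explicit formulas defining $\nu^n$ and $m^n$. Two structural facts make this work. First, on $\Theta_{\mu_0}$ the gradient $\nabla f$ is bounded away from $0$ by \eqref{5grad}, so every division appearing in these formulas is a stable operation under $O(h^\alpha)$ perturbations. Second, by Theorem \ref{level1} together with \eqref{normal}, one has $|y-y^\ast|\le M_{13}h^\alpha$, so the base point may be moved from $y\in\Gamma^n$ to $y^\ast\in\Gamma(t_n)$ at the cost of an $O(h^\alpha)$ error, using the Lipschitz regularity of the relevant derivatives of $f$. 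At the outset I record that \eqref{error1}--\eqref{error3} hold on the whole grid $h\Z^3$ (hence the shifted neighbours entering $D_i^2$ and $D_i^-D_j^+$ are all covered), that $(t_n,y)\in\Theta_{\mu_0}$ and $(t_n,y^\ast)\in\{t_n\}\times\Gamma(t_n)\subset\Theta_{\mu_0}$ for $y\in\Gamma^n$ and $h$ small (shown in the proof of Theorem \ref{level1}), and that $f\in C^4$ with $x$-derivatives supported in $\bar\Omega$, hence bounded and Lipschitz in $x$ up to third order.

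For the normal, write $D^+g^n(y)=(D_1^+g^n(y),D_2^+g^n(y),D_3^+g^n(y))=(g_1^n(y),g_2^n(y),g_3^n(y))$; by \eqref{error2} this equals $\nabla f(t_n,y)+O(h^\alpha)$, and then by the Lipschitz bound on $\nabla f$ and $|y-y^\ast|\le M_{13}h^\alpha$ it equals $\nabla f(t_n,y^\ast)+O(h^\alpha)$. Since $|\nabla f(t_n,y^\ast)|\ge C_1$ by \eqref{5grad}, also $|D^+g^n(y)|\ge C_1/2$ for $h$ small. The elementary bound $\bigl|\,a/|a|-b/|b|\,\bigr|\le 2|a-b|/\min(|a|,|b|)$ for nonzero $a,b\in\R^3$ then yields $|\nu^n(y)-\nu(t_n,y^\ast)|=O(h^\alpha)$, uniformly in $y\in\Gamma^n$ and $n$, which is the first assertion.

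For the mean curvature I would use the identities $D_i^2g^n=D_i^-D_i^+g^n=g_{ii}^n$ and $D_i^-D_j^+g^n=g_{ji}^n$, so that \eqref{error3} gives $D_i^2g^n(y)=\partial_{x_i}^2f(t_n,y)+O(h^\alpha)$ and $D_i^-D_j^+g^n(y)=\partial_{x_i}\partial_{x_j}f(t_n,y)+O(h^\alpha)$, hence $\sum_iD_i^2g^n(y)=\Delta f(t_n,y)+O(h^\alpha)$. Combining this with $\nu^n(y)=\nu(t_n,y^\ast)+O(h^\alpha)$ from the previous step, with $|\nu^n|=|\nu|=1$, and with the boundedness of the second $x$-derivatives of $f$, an expansion of the quadratic form (three bounded factors, each perturbed by $O(h^\alpha)$) gives
$$\sum_{i,j}D_i^-D_j^+g^n(y)\,\nu_i^n(y)\nu_j^n(y)=\sum_{i,j}\partial_{x_i}\partial_{x_j}f(t_n,y)\,\nu_i(t_n,y^\ast)\nu_j(t_n,y^\ast)+O(h^\alpha).$$
Moving the base point from $y$ to $y^\ast$ via the Lipschitz bound on the second derivatives of $f$ and $|y-y^\ast|\le M_{13}h^\alpha$, the bracket in the definition of $m^n(y)$ equals $\Delta f(t_n,y^\ast)-\sum_{i,j}\partial_{x_i}\partial_{x_j}f(t_n,y^\ast)\nu_i(t_n,y^\ast)\nu_j(t_n,y^\ast)+O(h^\alpha)$. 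Finally $1/|D^+g^n(y)|=1/|\nabla f(t_n,y^\ast)|+O(h^\alpha)$, since $|D^+g^n(y)|=|\nabla f(t_n,y^\ast)|+O(h^\alpha)$ and $C_1\le|\nabla f|\le C_2$ on $\Theta_{\mu_0}$ makes $a\mapsto 1/a$ Lipschitz there; as the bracket is bounded, multiplying yields $m^n(y)=m(t_n,y^\ast)+O(h^\alpha)$ uniformly in $y\in\Gamma^n$ and $n$, and taking the maximum gives the second assertion.

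I do not expect a genuine analytic obstacle here: the whole argument is a consistency-plus-stability estimate built on \eqref{error1}--\eqref{error3} and Theorem \ref{level1}. The part requiring the most care is bookkeeping — checking that every evaluation point (including the extra grid points entering the second-difference operators) is covered by the global bounds \eqref{error1}--\eqref{error3}, that $(t_n,y)$ and $(t_n,y^\ast)$ lie in $\Theta_{\mu_0}$ so that \eqref{5grad} renders all the divisions stable, and that each constant produced ($M_{14}$, $M_{15}$) is genuinely independent of $n$ and of $y\in\Gamma^n$, which holds because every input ($M_5$, $M_8$, $M_{13}$, $C_1$, $C_2$, and the $C^4$-norm of $f$) is.
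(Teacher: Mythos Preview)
Your proof is correct and follows exactly the route the paper intends: the paper's own proof is a one-liner stating that the assertion follows from \eqref{error2}, \eqref{error3}, \eqref{5grad} and \eqref{normal} together with the $C^1$-smoothness of $\nu$ and $m$ on $\Theta_{\mu_0}$, and you have carefully unpacked precisely those ingredients. There is no difference in strategy, only in level of detail.
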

\begin{proof}
Since $\nu$ and $m$ are $C^1$-smooth on $\Theta_{\mu_0}$, the assertion follows from \eqref{error2}, \eqref{error3}, \eqref{5grad} and \eqref{normal}.
\end{proof}
Finally, we give the notion of the the ``area element'' and ``surface area'' of $\Gamma^n$ that approximates $dS(x)$ and $|\Gamma(t)|$. For each $y\in\Gamma^n$, define the area element $dS^n(y)$ at $y$ as 
\begin{eqnarray*}
&&dS^n(y):=\frac{1}{|D^+_ig^n(y)|}\sqrt{D^+_jg^n(y)^2+D^+_kg^n(y)^2+D^+_ig^n(y)^2}\,\,h^2,\\
&&\{i,j,k\}=\{1,2,3\},\quad\mbox{$i$ is such that $\dis |D^+_ig^n(y)|:=\max_{l=1,2,3}|D^+_lg^n(y)|$},
\end{eqnarray*}  
where $D^+_ig^n(y)$ is always away from $0$ due to \eqref{error2} and \eqref{5grad}. In order to connect calculus with $dS^n$ on $\Gamma^n$ and the surface integral on $\Gamma(t_n)$, we introduce the (local)  ``hight functions'' of $\Gamma^n$ in terms of the direction of $e^1$, $e^2$ or $e^3$.  $\Gamma^n$ might contain unnecessary points that  make the ``hight functions'' multi-valued, and we get rid of these points from $\Gamma^n$ through the following refinement process. 

 It follows from \eqref{error2} and \eqref{5grad} that 
$\frac{C_1}{2}\le |D^+ g^n|\le 2C_2$ on $\Gamma^n$ for all $n$ and all sufficiently small $h>0$.  We see that for each $y\in \Gamma^n$,  
$$\frac{C_1}{6}\le \max_{j=1,2,3}|D^+_j g^n(y)|.$$
 Due to \eqref{error3}, there exists a constant $M_{16}>0$ such that $|D^+_j g^n(x+h\omega)-D^+_j g^n(x)|\le M_{16}h$  for all $x\in h\Z^3$, $\omega\in B$, $j=1,2,3$ and $0\le n\le T_\tau$. Suppose that $y^{(1)}\in\Gamma^n$ is such that  $\frac{C_1}{6}\le |D^+_i g^n(y^{(1)})|$. Then, we have 
\begin{eqnarray}\label{5555}
 &&\frac{C_1}{8}\le |D^+_i g^n(\tilde{y})|\quad \mbox{for all $\tilde{y}\in h\Z^3\cap O_\ep(y^{(1)})$},\\\nonumber
 &&O_{\ep_1}(y^{(1)}):=\Big\{ x\in\R^3\,|\, |x_i-y^{(1)}_i|<\ep_1,\,\,\, |x_j-y^{(1)}_j|<\frac{1}{2}a\ep_1,\,\,\,j\in\{1,2,3 \}\setminus\{i\} \Big\},\\\nonumber 
 && \ep_1:= \min\Big\{\frac{C_1}{100 M_{16}},\frac{\mu_0}{3}\Big\},\,\,\,\\\nonumber
 &&a:=\min_{\frac{C_1}{12 C_2}\le w<1} \frac{w}{\sqrt{1-w^2}}\,\,\,\,\,\,\,\,\,\,\Big(\mbox{note that } a\le\min\Big\{1, \frac{|\nu^n(y^{(1)})\cdot e^i|}{\sqrt{1-(\nu^n(y^{(1)})\cdot e^i)^2}}\Big\}\Big). 
\end{eqnarray}
Let $P(y^{(1)})$ be the plane in $\R^3$ containing $y^{(1)}$ whose normal direction is $\nu^n(y^{(1)})$. Let $P_\ep (y^{(1)})$ be the union with respect to $x\in P(y^{(1)})$  of all $\ep$-ball with the center $x$. 
Theorem \ref{level1} implies that for 
$$\ep := \frac{1}{4}\cdot\frac{C_1}{12C_2}\cdot\ep_1\,\,\,\,\,\,\,\,\,\,\Big(\mbox{note that }\ep\le   \frac{1}{4}|\nu^n(y^{(1)})\cdot e^i|\ep_1\Big)$$
 and all $0<h\ll \ep$, we have 
$$\Xi^{(1)}:=\Gamma^n\cap O_{\ep_1}(y^{(1)})\subset P_\ep(y^{(1)})\cap O_{\ep_1}(y^{(1)}).$$  
For $x=(x_1,x_2,x_3)\in A\subset\R^3$, the point $(x_j,x_k)$, $j<k$ is said to be the  $e^i$-projection of $x$ and denoted by $\Pi_{e^i}x$; the family of all $e^i$-projected points of $A$ is said to be the  $e^i$-projection of $A$ and denoted by $\Pi_{e^i}A$.
We proceed in the case of $i=3$ in \eqref{5555}  (the other cases are the same). 
Consider the  line $l(\hat{y}):=\{(\hat{y}_1,\hat{y}_2,z)\,|\,z\in\R\}$ for each $\hat{y}=(\hat{y}_1,\hat{y}_2)\in  \Pi_{e^3}  (h\Z^3\cap O_{\ep_1}(y^{(1)})$.   
For each $\hat{y}$,  the  intersection $l(\hat{y})\cap P_\ep(y^{(1)})\cap O_{\ep_1}(y^{(1)})$ is not empty and it   must contain at least one point of $\Gamma^n$, because $f$ is strictly smaller than $1$  at one of the two end points of $l(\hat{y})\cap P_\ep(y^{(1)})\cap O_{\ep_1}(y^{(1)})$ and   strictly larger than $1$ at the other end point;  \eqref{error1} implies that $g^n-1$ must change the sign within $l(\hat{y})\cap P_\ep(y^{(1)})\cap O_{\ep_1}(y^{(1)})\cap h\Z^3$. If $l(\hat{y})\cap P_\ep(y^{(1)})\cap O_{\ep_1}(y^{(1)})$ contains more than two points of $\Gamma^n$, we take the one $y(\hat{y})$ whose third component is smallest. Define the set 
$$\tilde{\Xi}^{(1)}:=\{ y(\hat{y})\,|\,  \hat{y}\in   \Pi_{e^3}  (h\Z^3\cap O_{\ep_1}(y^{(1)}) \}.$$
Consider a point $y^{(2)}\in \Gamma^n\setminus \Xi^{(1)}$ which is close to $ O_{\ep_1}(y^{(1)})$, and repeat the above procedure with $ \Gamma^n\setminus \Xi^{(1)}$ instead of $\Gamma^n$, where we may use the same constants $\ep_1$ and $\ep$, to obtain $\Xi^{(2)}$ and $\tilde{\Xi}^{(2)}$.  We continue the same procedure with a point $y^{(3)}\in \Gamma^n\setminus (\Xi^{(1)}\cup\Xi^{(2)})$ which is close to $ O_{\ep_1}(y^{(1)})\cup O_{\ep_1}(y^{(2)})$. The process continues until $\Xi^{(1)}\cup\Xi^{(2)}\cup\cdots\cup\Xi^{(K)}$ gets equal to $\Gamma^n$, where $K$ might depend on $y^{(1)}$ or $(h,\tau)$. However,  we have $K\le K^\ast$ with some number $K^\ast\in\N$ independent from $0<h\ll \ep$, because $\Gamma^n$ is bounded. We may construct $ O_{\ep_1}(y^{(1)}),\ldots, O_{\ep_1}(y^{(K)})$ so that they cover a closed surface (i.e., $ O_{\ep_1}(y^{(1)})\cup\cdots\cup O_{\ep_1}(y^{(K)})$ dose not have any ``hole''). 
Let $\tilde{\Gamma}^n$ be the refinement of $\Gamma^n$ defined as 
$$\tilde{\Gamma}^n:=\tilde{\Xi}^{(1)}\cup\tilde{\Xi}^{(2)}\cup\cdots\cup\tilde{\Xi}^{(K)}.$$  
\begin{Thm}\label{level4}
Let $t\in[0,T]$ be arbitrary and $n\in\{0,\ldots,T_\tau\}$ be such that $t\in[t_n,t_{n+1})$ for each $(h,\tau)$. For any continuous function $\phi:\Omega\to\R$, we have 
$$\sum_{y\in\tilde{\Gamma}^n}\phi(y)dS^n(y)-\int_{\Gamma(t_n)}\phi(x)dS(x)\to0\mbox{\quad as $(\tau,h)\to0$}.$$
In particular, if $\phi$ is Lipschitz continuous, the above convergence is of the rate $O(h^\alpha)$.   
\end{Thm}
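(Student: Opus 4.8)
The idea is to localize and recognize the left–hand sum as a two–dimensional Riemann sum. Using the patch construction preceding the statement, cover $\Gamma(t_n)$ by the boxes $O_{\ep_1}(y^{(1)}),\dots,O_{\ep_1}(y^{(K)})$ (with $K\le K^\ast$, $K^\ast$ independent of $(h,\tau)$); by refining this cover we may in addition assume that on each box a single index $i=i_k$ attains $\max_{l}|\partial_{x_l}f(t_n,\cdot)|$ and stays $\ge c>0$ by \eqref{5grad}. Outside a tube around the closed \emph{tie set} $T_n\subset\Gamma(t_n)$ (the set where that maximum is attained by at least two indices) this is possible since $T_n$ has empty interior in $\Gamma(t_n)$; the tube can be chosen of surface measure $O(h^\alpha)$, and on its complement \eqref{error2} together with Theorem~\ref{level2} forces the discrete maximizer of $|D^+_l g^n(y)|$ to equal $i_k$ for all small $h$, while the tube contributes at most $O(h^\alpha)$ to either side by \eqref{5grad} and boundedness of $\phi$. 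Since the $\Xi^{(k)}$ are carved out disjointly, $\tilde\Gamma^n=\tilde\Xi^{(1)}\sqcup\cdots\sqcup\tilde\Xi^{(K)}$, so it suffices to compare, for each $k$, the sum $\sum_{y\in\tilde\Xi^{(k)}}\phi(y)\,dS^n(y)$ with the surface integral of $\phi$ over the piece of $\Gamma(t_n)$ lying (as a graph in the $e^{i_k}$–direction) above the $e^{i_k}$–shadow of $\tilde\Xi^{(k)}$, and then to add up over $k$.

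Fix such a patch, say with $i_k=3$. The refinement attaches to each node $\hat y$ of the planar grid $\Pi_{e^3}\!\big(h\Z^3\cap O_{\ep_1}(y^{(k)})\big)$ claimed by $\tilde\Xi^{(k)}$ exactly one point $y(\hat y)\in\tilde\Xi^{(k)}$ with $\Pi_{e^3}y(\hat y)=\hat y$, and by Theorem~\ref{level1} and \eqref{normal} this point lies within $M_{13}h^\alpha$ of the unique point $x(\hat y)\in\Gamma(t_n)$ on the vertical line through $\hat y$. Hence: (i) the planar shadow of $\tilde\Xi^{(k)}$ agrees with that of the corresponding piece of $\Gamma(t_n)$ up to an $O(h^\alpha)$–neighbourhood of its boundary, of planar measure $O(h^\alpha)$; (ii) by \eqref{error2}, $D^+g^n(y(\hat y))=\nabla f(t_n,y(\hat y))+O(h^\alpha)=\nabla f(t_n,x(\hat y))+O(h^\alpha)$, so using the lower bound \eqref{5grad} on $|\partial_{x_3}f|$,
\[
dS^n(y(\hat y))=\frac{|D^+g^n(y(\hat y))|}{|D^+_3g^n(y(\hat y))|}\,h^2=\frac{|\nabla f(t_n,x(\hat y))|}{|\partial_{x_3}f(t_n,x(\hat y))|}\,h^2+O(h^{2+\alpha}),
\]
which is exactly the graph form of $dS$ at $x(\hat y)$ times the cell area $h^2$, with an error that sums over the $O(h^{-2})$ nodes to $O(h^\alpha)$; (iii) $\phi(y(\hat y))=\phi(x(\hat y))+\omega_\phi(M_{13}h^\alpha)$, where $\omega_\phi$ is the modulus of continuity of $\phi$ on the (compact) spatial projection of $\Theta_{\mu_0}$, and $\omega_\phi(M_{13}h^\alpha)=O(h^\alpha)$ when $\phi$ is Lipschitz. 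Combining (i)–(iii), $\sum_{y\in\tilde\Xi^{(k)}}\phi(y)\,dS^n(y)$ equals the Riemann sum $\sum_{\hat y}\phi(x(\hat y))\,\frac{|\nabla f|}{|\partial_{x_3}f|}(t_n,x(\hat y))\,h^2$ up to $o(1)$ (resp.\ $O(h^\alpha)$), and the latter converges to $\int\phi\,dS$ over the corresponding piece of $\Gamma(t_n)$ with error $O(h)$, which is $O(h^\alpha)$ since $\alpha\le1$, the integrand being continuous (Lipschitz if $\phi$ is). Summing over $k\le K\le K^\ast$ gives the claim.

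The step I expect to be the main obstacle is the geometric/combinatorial bookkeeping needed to make the patch decomposition legitimate: first, securing the \emph{single dominant direction per patch} modification above (handling the tie set and the change of the discrete maximizer, which is where \eqref{error2} and Theorem~\ref{level2} are really used); and second, verifying that the disjoint carving of the $\Xi^{(k)}$ reassembles $\Gamma(t_n)$ without gaps or double counting beyond an $O(h^\alpha)$ error — the ``seams'' between adjacent carved shadows project onto $O(h)$–neighbourhoods of $C^4$ curves in coordinate planes and $\tilde\Gamma^n$ is $O(h^\alpha)$–close to $\Gamma(t_n)$ by Theorem~\ref{level1}, so each seam contributes only $O(h^\alpha)$, but making this precise (and ruling out that a vertical line meets $\Gamma(t_n)$ more than once inside the overlap of two patches, which is where the ``almost flat'' choice of $\mu_0$ enters) is the technical heart. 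Everything else is the routine Taylor/Riemann–sum estimate summarized above.
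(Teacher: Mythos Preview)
Your plan is essentially the paper's: use the patches $O_{\ep_1}(y^{(k)})$ built just before the statement, write $\Gamma(t_n)$ over each patch as a graph in the direction $e^{i_k}$, and read $\sum_{y\in\tilde\Xi^{(k)}}\phi(y)\,dS^n(y)$ as a two--dimensional Riemann sum for $\int_{A^{(k)}}\phi\cdot|\nabla f|/|\partial_{x_{i_k}}f|$. The paper compresses steps (i)--(iii) of yours into the single line ``by the already obtained convergence results,'' carves the patches disjointly as $O_{\ep_1}(y^{(k)})\setminus\bigcup_{j<k}O_{\ep_1}(y^{(j)})$, and sums.

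There is, however, a genuine gap in your direction--matching argument. The tie set $T_n=\{x\in\Gamma(t_n):|\partial_{x_i}f|=|\partial_{x_j}f|\text{ for some }i\neq j\}$ need \emph{not} have empty interior in $\Gamma(t_n)$: for $f(x)=G(x_1+x_2,x_3)$ one has $\partial_{x_1}f\equiv\partial_{x_2}f$ and the entire level set sits in $T_n$. Hence neither ``$T_n$ has empty interior'' nor ``the tube can be chosen of surface measure $O(h^\alpha)$'' is available in general. What actually controls the mismatch is not the \emph{measure} of the bad set but the \emph{size} of the discrepancy: if $i_k$ is chosen as a maximizer of $|D^+_l g^n|$ at $y^{(k)}$, then for every $y$ in the patch
\[
0\le \max_l|D^+_l g^n(y)|-|D^+_{i_k}g^n(y)|\le 2M_{16}\,\mathrm{diam}\,O_{\ep_1}(y^{(k)})=O(\ep_1),
\]
so with the lower bound $C_1/8$ on both denominators,
\[
\Bigl|\,dS^n(y)-\tfrac{|D^+g^n(y)|}{|D^+_{i_k}g^n(y)|}\,h^2\Bigr|=O(\ep_1)\,h^2,
\]
and the total direction--mismatch error is $O(\ep_1)$, a \emph{fixed} constant when $\ep_1$ is fixed. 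To get $o(1)$ one must let $\ep_1\to 0$ with $h$; to get the rate $O(h^\alpha)$ one must couple $\ep_1$ to $h$ and then verify that the growing number of patches $K=O(\ep_1^{-2})$ does not inflate the seam error (your point (i)), which requires tracking that the seam contribution is $O(h/\ep_1)$ rather than $O(K\cdot h)$. The paper does not spell this out either---it absorbs the mismatch into $\eta^{(k)}(h,\tau)$ without comment---so you have put your finger on exactly the soft spot, but the tie--set route does not close it.
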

\begin{proof}
Note that $\{ O_{\ep_1}(y^{(1)}),\ldots, O_{\ep_1}(y^{(K)})\}$ is a covering of $\Gamma(t_n)$;   
  each $O_{\ep_1}(y^{(k)})\cap\Gamma(t_n)$ is represented by a hight function; $\phi$ is uniformly continuous on $ \overline{O_{\ep_1}(y^{(1)})\cup\cdots\cup O_{\ep_1}(y^{(K)})}$.

Suppose that $\Xi^{(1)}$ is given with $\Pi_{e^3}$ (the other cases are the same).  $O_{\ep_1}(y^{(1)})\cap\Gamma(t_n)$ is represented by a hight function $x_{3}=\varphi^{(1)}(x_1,x_2)$ for $(x_1,x_2)\in A^{(1)}:=\Pi_{e^3}O_{\ep_1}(y^{(1)})$. By the already obtained convergence results, we see that 
\begin{eqnarray*}
&&\!\!\!\!\!\!\!\!\!\int_{O_{\ep_1}(y^{(1)})\cap\Gamma(t_n)}\phi(x)dS\\
&&\!\!\!\!\!\!\!\!\!=\int_{A^{(1)}} \phi (x_1,x_2,\varphi^{(1)}(x_1,x_2))\frac{1}{|\p_{x_3}f(t,x)|}\sqrt{\p_{x_1}f(t,x)^2+\p_{x_2}f(t,x)^2+\p_{x_3}f(t,x)^2}\,\,dx_1dx_2\\
&&\!\!\!\!\!\!\!\!\!=\sum_{y\in \tilde{\Xi}^{(1)}}
\phi (y) dS^n(y) + \eta^{(1)}(h,\tau)|A^{(1)}|,
\end{eqnarray*} 
where $\eta^{(1)}(h,\tau)$ is determined by the uniform continuity of $\phi$ and $\eta^{(1)}(h,\tau)=O(h^\alpha)$ if $\phi$ is Lipschitz.  Similarly, we see that 
\begin{eqnarray*}
\int_{(O_{\ep_1}(y^{(2)})\setminus O_{\ep_1}(y^{(1)}))\cap\Gamma(t_n)}\phi(x)dS
=\sum_{y\in \tilde{\Xi}^{(2)}}
\phi (y) dS^n(y) + \eta^{(2)}(h,\tau)|A^{(2)}|,
\end{eqnarray*} 
where  $\eta^{(2)}(h,\tau)$ has the same bound as  $\eta^{(1)}(h,\tau)$ and  $A^{(2)}$ is the $e^i$-projection of $O_{\ep_1}(y^{(2)})\setminus O_{\ep_1}(y^{(1)})$;
\begin{eqnarray*}
\int_{\{O_{\ep_1}(y^{(3)})\setminus (O_{\ep_1}(y^{(1)})\cup O_{\ep_1}(y^{(2)})\}\cap\Gamma(t_n)}\phi(x)dS
=\sum_{y\in \tilde{\Xi}^{(3)}}
\phi (y) dS^n(y) + \eta^{(3)}(h,\tau)|A^{(3)}|,
\end{eqnarray*} 
where   $\eta^{(3)}(h,\tau)$ has the same bound as  $\eta^{(1)}(h,\tau)$ and  $A^{(3)}$ is the $e^{i'}$-projection of $O_{\ep_1}(y^{(3)})\setminus (O_{\ep_1}(y^{(1)})\cup O_{\ep_1}(y^{(2)}))$, and so on. Hence, by summarizing these equalities, we obtain 
\begin{eqnarray*}
\int_{\Gamma(t)}\phi(x) dS=\sum_{y\in\tilde{\Gamma}^n}\phi(y)dS^n(y)+\sum_{k=1}^K\eta^{(k)}(h,\tau)|A^{(k)}|.
\end{eqnarray*}
Since $|A^{(k)}|\le (\frac{1}{2}a\ep_1)^2$ and $K$ is finite, we conclude our assertion.  
\end{proof}       
\begin{Cor}\label{level5}
Let $t\in[0,T]$ be arbitrary and $n\in\{0,\ldots,T_\tau\}$ be such that $t\in[t_n,t_{n+1})$ for each $(h,\tau)$. For any smooth function $\phi:[0,T]\times\Omega\to\R$, we have 
$$\Big|\sum_{y\in\tilde{\Gamma}^n}\phi(t_n,y)dS^n(y)-\int_{\Gamma(t)}\phi(t,x)dS\Big|\le M_{16} h^\alpha \mbox{\quad for all $(\tau,h)\to0$}.$$
\end{Cor}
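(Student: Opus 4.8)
The plan is to reduce the corollary to Theorem~\ref{level4} applied at the frozen time $t_n$, together with a continuity-in-time estimate for the exact surface integral. First I would observe that, since $\phi\in C^\infty([0,T]\times\Omega)$ and $\overline{\Omega}$ is compact, the spatial slice $\phi(t_n,\cdot):\Omega\to\R$ is Lipschitz continuous with a Lipschitz constant bounded uniformly in $n$ by $\sup|\p_x\phi|$ over $[0,T]\times\overline{\Omega}$. Hence the Lipschitz case of Theorem~\ref{level4} yields
\begin{eqnarray*}
\Big|\sum_{y\in\tilde{\Gamma}^n}\phi(t_n,y)dS^n(y)-\int_{\Gamma(t_n)}\phi(t_n,x)dS\Big|\le M h^\alpha
\end{eqnarray*}
with $M$ independent of $(h,\tau)$ and $n$.

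The remaining task is to bound $\big|\int_{\Gamma(t)}\phi(t,x)dS-\int_{\Gamma(t_n)}\phi(t_n,x)dS\big|$ for $t\in[t_n,t_{n+1})$, i.e. for $|t-t_n|<\tau$. For this I would use the Lagrangian description $\Gamma(t)=X(t,0,\Gamma(0))$ recalled at the beginning of Section~5, pulling the surface integral back to the fixed reference surface $\Gamma(0)$:
\begin{eqnarray*}
\int_{\Gamma(t)}\phi(t,x)dS(x)=\int_{\Gamma(0)}\phi\big(t,X(t,0,\xi)\big)\,J(t,\xi)\,dS_0(\xi),
\end{eqnarray*}
where $dS_0$ is the area element of $\Gamma(0)$ and $J(t,\xi)$ is the tangential Jacobian of the map $\xi\mapsto X(t,0,\xi)$ on $\Gamma(0)$. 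Because $X$ is $C^4$ in $(t,\xi)$ and $\Gamma(0)$ is a compact $C^4$ surface, both $t\mapsto\phi(t,X(t,0,\xi))$ and $t\mapsto J(t,\xi)$ are $C^1$ with derivatives bounded uniformly in $\xi\in\Gamma(0)$; consequently the integrand is Lipschitz in $t$ uniformly in $\xi$, and therefore $t\mapsto\int_{\Gamma(t)}\phi(t,x)dS$ is Lipschitz on $[0,T]$ with some constant $M'$ independent of $(h,\tau)$ and $n$. Thus
\begin{eqnarray*}
\Big|\int_{\Gamma(t)}\phi(t,x)dS-\int_{\Gamma(t_n)}\phi(t_n,x)dS\Big|\le M'|t-t_n|\le M'\tau=O(h^{2-\alpha}).
\end{eqnarray*}

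Finally I would combine the two estimates by the triangle inequality. Since the section works under the scaling $\tau=O(h^{2-\alpha})$ with $0<\alpha\le1$, we have $h^{2-\alpha}\le h^\alpha$ for all small $h$, so $O(h^{2-\alpha})\subset O(h^\alpha)$, and the two contributions add up to $O(h^\alpha)$, which is the claimed bound with $M_{16}=M+M'$. The only genuinely substantive point is the Lipschitz-in-time continuity of the exact surface integral; the reduction to Theorem~\ref{level4} is immediate. As an alternative to the Lagrangian pull-back, if one prefers not to differentiate the flow map on a submanifold, one may parametrize $\Gamma(t)$ locally by height functions over fixed coordinate planes and differentiate the height functions in $t$ via the implicit function theorem applied to $f(t,\cdot)=1$ (legitimate by \eqref{5grad}); this gives the same uniform Lipschitz bound. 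I expect the write-up of this continuity estimate to be the main place where care is needed.
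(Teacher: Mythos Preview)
Your proposal is correct and follows essentially the same approach as the paper: both split the error via the triangle inequality into the discrete-vs-continuous piece at time $t_n$ (handled by the Lipschitz case of Theorem~\ref{level4}) and the time-continuity piece $|\int_{\Gamma(t)}\phi(t,x)\,dS-\int_{\Gamma(t_n)}\phi(t_n,x)\,dS|\le M'\tau$, then absorb $\tau=O(h^{2-\alpha})$ into $O(h^\alpha)$ using $0<\alpha\le1$. The only cosmetic difference is that the paper obtains the time-continuity bound by citing the transport theorem, whereas you derive it directly via the Lagrangian pull-back (which is, of course, exactly how the transport theorem is proved).
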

\begin{proof}
By the transport theorem (see, e.g., \cite{pruss}), we have 
$$\Big|\int_{\Gamma(t_n)}\phi(t_n,x)dS-\int_{\Gamma(t)}\phi(t,x)dS\Big|\le M_{17}\tau.$$  
Then, similar reasoning for Theorem \ref{level4}  leads to the assertion. 
\end{proof}

\medskip\medskip\medskip

\noindent{\bf Acknowledgement.} 
This work was written during author's one-year research stay  in Fachbereich Mathematik, Technische Universit\"at Darmstadt, Germany, with the grant Fukuzawa Fund (Keio Gijuku Fukuzawa Memorial Fund for the Advancement of Education and Research). The author would like to express special thanks to Professor Dieter Bothe for his kind hosting in TU-Darmstadt and to Professor Stefano Modena for his personal discussions on DiPerna-Lions theory.  The author is supported by JSPS Grant-in-aid for Young Scientists \#18K13443 and JSPS Grants-in-Aid for Scientific Research (C) \#22K03391. 

\end{document}